\numberwithin{equation}{section}
\newtheorem{remark}{Remark}[section]
\def\lam{{\lambda}}
\def\Ome{{\Omega}}
\def\nab{{\nabla}}
\def\vepsi{{\varepsilon}}
\def\p{{\partial}}
\def\reff#1{\eqref{#1}}
\def\norm#1#2{\Vert\,#1\,\Vert_{#2}}
\def\vepsi{\varepsilon}
\def\cT{{\mathcal T}}
\def\no{{\nonumber}}
\def\div{{\mbox{\rm div\,}}}
\def\p{{\partial}}
\def\nab{\nabla}
\def\Ome{\Omega}
\def\lam{\lambda}
\newcommand{\bRM}{\mathbf{RM}}
\newcommand{\br}{\mathbf{r}}
\def\ba{\mathbf{a}}
\def\bb{\mathbf{b}}
\def\bC{\mathbf{C}}
\def\bbf{\mathbf{f}}
\def\bu{\mathbf{u}}
\def\bv{\mathbf{v}}
\def\bw{\mathbf{w}}
\def\bg{\mathbf{g}}
\def\bn{\mathbf{n}}
\def\bH{\mathbf{H}}
\def\bV{\mathbf{V}}
\def\bL{\mathbf{L}}
\def\bP{\mathbf{P}}
\def\bV{\mathbf{V}}
\def\bX{\mathbf{X}}
\def\R{\mathbb{R}}
\def\bx{{\bf x}}
\begin{document}


\title{Error Estimates of a Fully Discrete Multiphysics Finite Element Method for a Nonlinear Poroelasticity Model\footnote{Last update: \today}}

\author{
Zhihao Ge\thanks{School of Mathematics and Statistics, Henan University, Kaifeng 475004, P.R. China ({\tt zhihaoge@henu.edu.cn}).
	The work of this author was supported by the National Natural Science Foundation of China under grant No. 11971150.}
\and
Wenlong He\thanks{School of Mathematics and Statistics, Henan University, Kaifeng 475004,P.R.China}
}

\maketitle


\setcounter{page}{1}



\begin{abstract}
In this paper, we propose a multiphysics finite element method for a nonlinear poroelasticity model. To better describe the processes of deformation and diffusion, we firstly reformulate the nonlinear fluid-solid coupling problem into a fluid-fluid coupling problem by a multiphysics approach. Then we design a fully discrete time-stepping scheme to use multiphysics finite element method with $P_2-P_1-P_1$ element pairs  for the space variables and backward Euler method for the time variable, and we adopt the Newton iterative method to deal with the nonlinear term. Also, we derive the discrete energy laws and the optimal convergence order error estimates without any assumption on the nonlinear stress-strain relation. Finally, we show some numerical examples to verify the rationality of theoretical analysis and there is no ``locking phenomenon".
\end{abstract}

\begin{keywords}
Nonlinear poroelasticity; Stokes equations; finite element methods; error estimates.
\end{keywords}

\pagestyle{myheadings}
\thispagestyle{plain}
\markboth{ZHIHAO GE, WENLONG HE}{MULTIPHYSICS FINITE ELEMENT METHODS FOR NONLINEAR POROELASTICITY}


\section{Introduction}\label{sec-1}

Poroelasticity model is a fluid-solid coupled system at poro scale, which is widely used in various fields such as geophysics, biomechanics, civil engineering, chemical engineering, materials science and so on, one can refer to \cite{2,3,6,7,8,10,biot,coussy04,de86}. There are many kinds of nonlinear poroelasticity, such as the permeability tensor $ K(\div \bu) $ (cf. \cite{20210921,20210922,20210923,20210924,20210925,20210927,20210928} and the references therein), the nonlinear constitutive stress-strain of solid (cf. \cite{1,11}) and so on.  In practical applications, many problems, such as cables, beams, shells, polymers and metal foams, require a nonlinear stress-strain relation, one can refer to \cite{201912092,2,coussy04,hamley07} and so on.  For linear poroelasticity, Showalter provides the analysis of well-posedness of weak solution to a linear poroelasticity model in \cite{20210819}. Phillips and Wheeler propose and analyze a continuous-in-time and a discrete-in-time mixed finite element method in \cite{pw07,pw07b} which simultaneously approximates the pressure and its gradient along with the displacement vector field, and the authors pointed that there exists ``locking phenomenon" by using continuous Galerkin finite method. Feng, Ge and Li in \cite{fglarxiv,fgl14} propose a multiphysics finite method for approximating linear poroelasticity model by reformulating the original model, the multiphysics finite element method is a effective approach to study the poroelasticity model and it overcomes the ``locking phenomenon". Based on the idea of \cite{fgl14}, Ge and He in \cite{201912096} prove the growth, coercivity and monotonicity of $\mathcal{N}(\nabla\bu)$ based on a multiphysics approach without any assumption on the nonlinear stress-strain relation for a nonlinear poroelasticity model with the constitutive relation $\tilde{\sigma}(\mathbf{u})=\mu\tilde{\varepsilon}(\mathbf{u})+\lambda tr(\tilde{\varepsilon}(\mathbf{u}))\mathbf{I}$, where $\tilde{\varepsilon}(\bu)=\dfrac{1}{2}(\nabla\bu+\nabla^{T}\boldsymbol{u}+2\nabla^{T}\bu\nabla\bu)$ is the deformed Green strain tensor. In this paper, we propose a fully discrete multiphysics finite element method for the nonlinear poroelasticity model(cf. \cite{201912096})  by using the $P_2-P_1-P_1$ element pairs for space variables and backward Euler method for time variable. 
Without any assumption on the nonlinear stress-strain relation, we derive the discrete energy estimates and apply Schauder's fixed point theorem to prove the existence and uniqueness of the numerical solution of the proposed numerical method. And we prove that the time-stepping method has the optimal convergence order. In the numerical tests, we show some numerical examples to verify the theoretical results and there is no ``locking phenomenon". To the best of our knowledge, it is the first time to propose a fully discrete multiphysics finite element method and derive the optimal order error estimate for the nonlinear poroelasticity model.  

The remainder of this paper is organized as follows. In Section \ref{sec-3.2}, we introduce the basic results of PDE model. In Section \ref{sec-3}, we propose and analyze the coupled and decoupled time stepping methods based on the multiphysics approach. In Section \ref{sec-3.3}, we prove that the time-stepping has the optimal convergence order. In Section \ref{sec-4}, we provide some numerical experiments to verify the theoretical results of the proposed approach and methods. Finally, we draw a conclusion to summary the main results of this paper.

\section{ Basic results of PDE model}\label{sec-3.2}
In this paper, we consider the following quasi-static poroelasticity model (for the linear and nonlinear poroelasticity model, one can refer to \cite{pw07,fgl14,fglarxiv} and \cite{201912096}, respectively):
\begin{alignat}{2}\label{8.71} 
	-\div\tilde{\sigma}(\bu) + \alpha \nab p &= \bbf
	&&\qquad \mbox{in } \Ome_T:=\Ome\times (0,T)\subset \mathbf{\R}^d\times (0,T),\\
	(c_0p+\alpha \div \bu)_t + \div \bv_f &=\phi &&\qquad \mbox{in } \Ome_T,\label{8.72}
\end{alignat}
where
\begin{align} 
	&\tilde{\sigma}(\mathbf{u})=\mu\tilde{\varepsilon}(\mathbf{u})+\lambda tr(\tilde{\varepsilon}(\mathbf{u}))\mathbf{I},~~~~~ \tilde{\varepsilon}(\bu)=\dfrac{1}{2}(\nabla\bu+\nabla^{T}\bu+2\nabla^{T}\bu\nabla\bu), \label{8.73}\\
	&\bv_f:= -\frac{K}{\mu_f} \bigl(\nab p -\rho_f \bg \bigr). \label{8.74}
\end{align}
Here $\tilde{\vepsi}(\bu)$ is known as the deformed Green strain tensor, $\bu$ denotes the displacement vector of the solid and $p$ denotes the pressure of the solvent. $\mathbf{I}$ denotes the $d\times d$ identity matrix. $\bbf$ is the body force. The permeability tensor $K=K(x)$ is assumed to be symmetric and uniformly positive definite in the sense that there exists positive constants $K_1$ and $K_2$ such that $K_1|\zeta|^2\leq K(x)\zeta\cdot \zeta \leq K_2 |\zeta|^2$ for a.e. $x\in\Omega$ and any $\zeta\in \mathbf{\R}^d$; the solvent viscosity $\mu_f$, Biot-Willis constant $\alpha$, and the constrained specific storage coefficient $c_0$. In addition, $\tilde{\sigma}(\bu)$ is called the (effective) stress tensor. $\bv_f$ is the volumetric solvent flux and (\ref{8.74}) is called the well-known Darcy's law. $ \lambda $ and $ \mu $ are
Lam\'e constants, $\widehat{\sigma}(\bu, p):=\tilde{\sigma}(\bu)-\alpha p \mathbf{I}$ is the total stress tensor. We assume that $\rho_f\not\equiv 0$, which is a realistic assumption.

To close the above system, the following set of boundary and initial conditions will be considered in this paper:
\begin{alignat}{2} \label{8.75}
	\widehat{\sigma}(\bu,p)\bn=\tilde{\sigma}(\bu)\bn-\alpha p \bn &= \bbf_1
	&&\qquad \mbox{on } \p\Ome_T:=\p\Ome\times (0,T),\\
	\bv_f\cdot\bn= -\frac{K}{\mu_f} \bigl(\nab p -\rho_f \bg \bigr)\cdot \bn
	&=\phi_1 &&\qquad \mbox{on } \p\Ome_T, \label{8.76} \\
	\bu=\bu_0,\qquad p&=p_0 &&\qquad \mbox{in } \Ome\times\{t=0\}. \label{8.77}
\end{alignat}
Introduce new variables
\[
q:=\div \bu,\quad \eta:=c_0p+\alpha q,\quad \xi:=\alpha p -\lam q.
\]
Denote 
\begin{eqnarray}
	\mathcal{N}(\nabla\bu)=\tilde{\sigma}(\bu)-\lambda \div\bu~ \mathbf{I},\label{eq210823-1}
\end{eqnarray}  
then we have
\begin{eqnarray}
	\mathcal{N}(\nabla\mathbf{u})=\mu\varepsilon(\mathbf{u})+\mu \nabla^{T}\mathbf{u}\nabla\mathbf{u}+\lambda\|\nabla\mathbf{u}\|_{F}^{2}\mathbf{I}.\label{eq210823-2}
\end{eqnarray}
Due to the fact of $(\nabla^{T}\mathbf{u}\nabla\mathbf{u},rot \mathbf{v})=0,~(\|\nabla\mathbf{u}\|_{F}^{2}{\bf I},rot \mathbf{v})=0$, so we have
\begin{equation*}
	(\mathcal{N}(\nabla\mathbf{u}),\nabla\mathbf{v})=(\mathcal{N}(\nabla\mathbf{u}),\varepsilon(\mathbf{v})),
\end{equation*}
where $\varepsilon(\bu)=\dfrac{1}{2}(\nabla^{T}\bu+\nabla\bu)$.

In some engineering literature, the Lam\'e constant
$\mu$ is also called the {\em shear modulus} and denoted by $G$, and
$B:=\lam +\frac23 G$ is called the {\em bulk modulus}. $\lam,~\mu$ and $B$
are computed from the {\em Young's modulus} $E$ and the {\em Poisson ratio}
$\nu$ by the following formulas:
\[
\lam=\frac{E\nu}{(1+\nu)(1-2\nu)},\qquad \mu=G=\frac{E}{2(1+\nu)}, \qquad
B=\frac{E}{3(1-2\nu)}.
\]

It is easy to check that
\begin{align}\label{2.19}
	p=\kappa_1 \xi + \kappa_2 \eta, \qquad q=\kappa_1 \eta-\kappa_3 \xi,
\end{align}
where $\kappa_1= \frac{\alpha}{\alpha^2+\lam c_0},
\kappa_2=\frac{\lam}{\alpha^2+\lam c_0},
\kappa_3=\frac{c_0}{\alpha^2+\lam c_0}$.

Then the problem \reff{8.71}-\reff{8.74} can be rewritten as
\begin{alignat}{2} \label{2.3}
	-\div\mathcal{N}(\nabla\bu) + \nab \xi &= \bbf &&\qquad \mbox{in } \Ome_T,\\
	\kappa_3\xi +\div \bu &=\kappa_1\eta &&\qquad \mbox{in } \Ome_T, \label{2.4}\\
	\eta_t - \frac{1}{\mu_f} \div[K (\nab (\kappa_1 \xi + \kappa_2 \eta)-\rho_f\bg)]&=\phi
	&&\qquad \mbox{in } \Ome_T. \label{2.5}
\end{alignat}
The boundary and initial conditions \reff{8.75}-\reff{8.77} can be rewritten as
\begin{alignat}{2} \label{2.9}
	\tilde{\sigma}(\bu)\bn-\alpha (\kappa_1 \xi + \kappa_2 \eta) \bn &= \bbf_1
	&&\qquad \mbox{on } \p\Ome_T:=\p\Ome\times (0,T),\\
	-\frac{K}{\mu_f} \bigl(\nab (\kappa_1 \xi + \kappa_2 \eta) -\rho_f \bg \bigr)\cdot \bn
	&=\phi_1 &&\qquad \mbox{on } \p\Ome_T, \label{2.10} \\
	\bu=\bu_0,\qquad p&=p_0 &&\qquad \mbox{in } \Ome\times\{t=0\}. \label{2.11}
\end{alignat}
In this paper, $\Omega \subset \R^d \,(d=1,2,3)$ denotes a bounded polygonal domain with the boundary
$\p\Ome$. The standard function space notation is adopted in this paper, their
precise definitions can be found in \cite{bs08,cia,temam}.
In particular, $(\cdot,\cdot)$ and $\langle \cdot,\cdot\rangle$
denote respectively the standard $L^2(\Ome)$ and $L^2(\p\Ome)$ inner products. For any Banach space $B$, we let $\mathbf{B}=[B]^d$, and denote  its dual space by $\mathbf{B}^\prime$. In particular, we use $(\cdot,\cdot)_{\small\rm dual}$ 
to denote the dual product on $\bH^1(\Ome)' \times \bH^1(\Ome)$,
and $\norm{\cdot}{L^p(B)}$ is a shorthand notation for
$\norm{\cdot}{L^p((0,T);B)}$.

We also introduce the function spaces
\begin{align*}
	&L^2_0(\Omega):=\{q\in L^2(\Omega);\, (q,1)=0\}, \qquad \bX:= \bH^1(\Ome).
\end{align*}
Let $\bRM:=\{\br:=\ba+\bb \times x;\, \ba, \bb, x\in \R^d\}$ denote the space of infinitesimal rigid motions. It is well known (cf. \cite{brenner,gra,temam}) that $\bRM$ is the kernel of
the strain operator $\vepsi$, that is, $\br\in \bRM$ if and only if
$\vepsi(\br)=0$. Hence, we have
\begin{align}
	\vepsi(\br)=0,\quad \div \br=0 \qquad\forall \br\in \bRM. \label{2.2}
\end{align}

Let $L^2_\bot(\p\Ome)$ and $\bH^1_\bot(\Ome)$ denote respectively the subspaces of $L^2(\p\Ome)$ and $\bH^1(\Ome)$ which are orthogonal to $\bRM$, that is,
\begin{align*}
	&\bH^1_\bot(\Ome):=\{\bv\in \bH^1(\Ome);\, (\bv,\br)=0\,\,\forall \br\in \bRM\},
	\\
	&L^2_\bot(\p\Ome):=\{\bg\in L^2(\p\Ome);\,\langle \bg,\br\rangle=0\,\,
	\forall \br\in \bRM \}.
\end{align*}

Next, we introduce the definition of the weak solution to the problem \reff{8.71}-\reff{8.77} and \reff{2.3}-\reff{2.5} with
\reff{2.9}-\reff{2.11}  as follows.
\begin{definition}\label{weak1}
Let $\bu_0\in\bH^1(\Ome),~ \bbf\in\bL^2(\Omega),~
\bbf_1\in \bL^2(\p\Ome),~ p_0\in L^2(\Ome),~ \phi\in L^2(\Ome)$,
and $\phi_1\in  L^2(\p\Ome)$.  Assume $c_0>0$ and
$(\bbf,\bv)+\langle \bbf_1,~ \bv \rangle =0$ for any $\bv\in \mathbf{RM}$.
Given $T > 0$, a tuple $(\bu,p)$ with
\begin{alignat*}{2}
	&\bu\in L^\infty\bigl(0,T; \bH_\perp^1(\Ome)),
	&&\qquad p\in L^\infty(0,T; L^2(\Omega))\cap L^2 \bigl(0,T; H^1(\Omega)\bigr), \\
	&p_t, (\div\bu)_t \in L^2(0,T;H^{1}(\Ome)')
	&&\qquad 
\end{alignat*}
is called a weak solution to the problem \reff{8.71}--\reff{8.77}, if there hold for almost every $t \in [0,T]$
\begin{alignat}{2}\label{2.32}
	&\bigl( \mathcal{N}(\nabla\bu), \vepsi(\bv) \bigr)
	+\lam\bigl(\div\bu, \div\bv \bigr)
	-\alpha \bigl( p, \div \bv \bigr)  && \\
	&\hskip 2in
	=(\bbf, \bv)+\langle \bbf_1,\bv\rangle
	&&\quad\forall \bv\in \bH^1(\Ome), \no \\
	&\bigl((c_0 p +\alpha\div\bu)_t, \varphi \bigr)_{\rm dual}
	+ \frac{1}{\mu_f} \bigl( K(\nab p-\rho_f\bg), \nab \varphi \bigr)
	\label{2.33} \\
	&\hskip 2in =\bigl(\phi,\varphi\bigr)
	+\langle \phi_1,\varphi \rangle
	&&\quad\forall \varphi \in H^1(\Ome), \no  \\
	&\bu(0) = \bu_0,\qquad p(0)=p_0.  && \label{2.34}
\end{alignat}
\end{definition}
\begin{definition}\label{weak2}
	Let $\bu_0\in \bH^1(\Ome),~ \bbf \in \bL^2(\Omega),~
	\bbf_1 \in \bL^2(\p\Ome),~ p_0\in L^2(\Ome),~ \phi\in L^2(\Ome)$,
	and $\phi_1\in L^2(\p\Ome)$.  Assume $c_0>0$ and
	$(\bbf,\bv)+\langle \bbf_1,~ \bv \rangle =0$ for any $\bv\in \mathbf{RM}$.
	Given $T > 0$, a $5$-tuple $(\bu,\xi,\eta,p,q)$ with
	\begin{alignat*}{2}
		&\bu\in L^\infty\bigl(0,T; \bH_\perp^1(\Ome)), &&\qquad
		\xi\in L^\infty \bigl(0,T; L^2(\Omega)\bigr), \\
		&\eta\in L^\infty\bigl(0,T; L^2(\Omega)\bigr)
		\cap H^1\bigl(0,T; H^{1}(\Omega)'\bigr),
		&&\qquad q\in L^\infty(0,T;L^2(\Ome)), \\
		&p\in L^\infty \bigl(0,T; L^2(\Omega)\bigr) \cap L^2 \bigl(0,T; H^1(\Omega)\bigr)  &&
	\end{alignat*}
	is called a weak solution to the problem \reff{2.3}-\reff{2.5},	if there hold for almost every $t \in [0,T]$
	\begin{alignat}{2}\label{2.12}
		\bigl(\mathcal{N}(\nabla\bu), \vepsi(\bv) \bigr)-\bigl( \xi, \div \bv \bigr)
		&= (\bbf, \bv)+\langle \bbf_1,\bv\rangle
		&&\quad\forall \bv\in \bH^1(\Ome), \\
		\kappa_3 \bigl( \xi, \varphi \bigr) +\bigl(\div\bu, \varphi \bigr)
		&= \kappa_1\bigl(\eta, \varphi \bigr) &&\quad\forall \varphi \in L^2(\Ome), \label{2.13}  \\
		\bigl(\eta_t, \psi \bigr)_{\rm dual}
		+\frac{1}{\mu_f} \bigl(K(\nab (\kappa_1\xi +\kappa_2\eta) &-\rho_f\bg), \nab \psi \bigr) \label{2.14} \\
		&= (\phi, \psi)+\langle \phi_1,\psi\rangle &&\quad\forall \psi \in H^1(\Ome) , \no  \\
		p:=\kappa_1\xi +\kappa_2\eta, \qquad
		&q:=\kappa_1\eta-\kappa_3\xi, && \label{2.15} \\
		\eta(0)= \eta_0:&=c_0p_0+\alpha q_0.  && \label{2.16}
	\end{alignat}
\end{definition}

\begin{lemma}\label{lma210903-1}
	There exist positive constants $ C_{1},~C_{2},~C_{3} $ such that
	\begin{align}
		&\left\|\mathcal{N}(\nabla\mathbf{u}) \right\|_{L^{2}(\Omega)}\leq C_{1}\left\|\varepsilon(\mathbf{u}) \right\|_{L^{2}(\Omega)},\label{3.203}\\ 
		&(\mathcal{N}(\nabla(\mathbf{u})),\varepsilon(\mathbf{u}))\geq C_{2}\left\|\varepsilon(\mathbf{u}) \right\|_{L^{2}(\Omega)}^{2},\label{3.204}\\
		&(\mathcal{N}(\nabla(\mathbf{u}))-\mathcal{N}(\nabla(\mathbf{v})),\varepsilon(\mathbf{u})-\varepsilon(\mathbf{v}))\geq C_{4}\left\|\varepsilon(\mathbf{u})-\varepsilon(\mathbf{v}) \right\|_{L^{2}(\Omega)}^{2}.\label{3.207} 
	\end{align}
\end{lemma}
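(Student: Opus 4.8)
The plan is to derive all three estimates directly from the explicit form \reff{eq210823-2}, $\mathcal{N}(\nabla\bu)=\mu\varepsilon(\bu)+\mu\nabla^{T}\bu\nabla\bu+\lambda\|\nabla\bu\|_{F}^{2}\mathbf{I}$, using three ingredients: the elementary pointwise matrix bounds $|\nabla^{T}\bu\nabla\bu|\le|\nabla\bu|^{2}$, $\bigl|\,\|\nabla\bu\|_{F}^{2}\mathbf{I}\,\bigr|=\sqrt{d}\,|\nabla\bu|^{2}$ and $|\div\bu|\le\sqrt{d}\,|\varepsilon(\bu)|$; Korn's inequality $\|\nabla\bu\|_{L^{2}(\Ome)}\le C_{K}\|\varepsilon(\bu)\|_{L^{2}(\Ome)}$, valid because the arguments lie in $\bH^{1}_{\bot}(\Ome)$ (the kernel of $\varepsilon$ being $\bRM$, see \reff{2.2}), and likewise for $\bu-\bv$; and the a priori bound on $\|\nabla\bu\|_{L^{\infty}(\Ome)}$ (and $\|\nabla\bv\|_{L^{\infty}(\Ome)}$) available under the standing assumptions, cf.\ \cite{201912096}. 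Estimate \reff{3.203} is then immediate: the triangle inequality applied to \reff{eq210823-2} gives $|\mathcal{N}(\nabla\bu)|\le\mu|\varepsilon(\bu)|+(\mu+\lambda\sqrt{d})|\nabla\bu|^{2}$ pointwise, and after integrating, bounding $\bigl\|\,|\nabla\bu|^{2}\,\bigr\|_{L^{2}(\Ome)}\le\|\nabla\bu\|_{L^{\infty}(\Ome)}\|\nabla\bu\|_{L^{2}(\Ome)}$ and applying Korn one obtains \reff{3.203} with $C_{1}$ depending on $\mu,\lambda,d,C_{K}$ and $\|\nabla\bu\|_{L^{\infty}(\Ome)}$.

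For the coercivity \reff{3.204} I would expand the inner product from \reff{eq210823-2},
\begin{equation*}
\bigl(\mathcal{N}(\nabla\bu),\varepsilon(\bu)\bigr)=\mu\|\varepsilon(\bu)\|_{L^{2}(\Ome)}^{2}+\mu\bigl(\nabla^{T}\bu\nabla\bu,\varepsilon(\bu)\bigr)+\lambda\bigl(\|\nabla\bu\|_{F}^{2},\div\bu\bigr),
\end{equation*}
retain the first (good) term, and estimate the last two terms — which are trilinear in $\nabla\bu$ and carry no sign — in absolute value by $(\mu+\lambda\sqrt{d})\,\|\nabla\bu\|_{L^{\infty}(\Ome)}\,\|\nabla\bu\|_{L^{2}(\Ome)}\,\|\varepsilon(\bu)\|_{L^{2}(\Ome)}$, after which Korn's inequality turns the bound into a multiple of $\|\varepsilon(\bu)\|_{L^{2}(\Ome)}^{2}$. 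This gives \reff{3.204} with $C_{2}=\mu-(\mu+\lambda\sqrt{d})\,C_{K}\,\|\nabla\bu\|_{L^{\infty}(\Ome)}$, which is positive precisely in the regime (small enough $\|\nabla\bu\|_{L^{\infty}(\Ome)}$) in which the lemma is invoked.

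For the monotonicity \reff{3.207} the key is to telescope the nonlinear increments:
\begin{equation*}
\nabla^{T}\bu\nabla\bu-\nabla^{T}\bv\nabla\bv=\nabla^{T}\bu\,\nabla(\bu-\bv)+\nabla^{T}(\bu-\bv)\,\nabla\bv,\qquad\|\nabla\bu\|_{F}^{2}-\|\nabla\bv\|_{F}^{2}=(\nabla\bu+\nabla\bv):\nabla(\bu-\bv).
\end{equation*}
Then $\mathcal{N}(\nabla\bu)-\mathcal{N}(\nabla\bv)=\mu\,\varepsilon(\bu-\bv)+\mu\bigl(\nabla^{T}\bu\,\nabla(\bu-\bv)+\nabla^{T}(\bu-\bv)\,\nabla\bv\bigr)+\lambda\bigl((\nabla\bu+\nabla\bv):\nabla(\bu-\bv)\bigr)\mathbf{I}$, and contracting with $\varepsilon(\bu-\bv)$ and integrating, the linear part produces $\mu\|\varepsilon(\bu-\bv)\|_{L^{2}(\Ome)}^{2}$ while the remaining terms are bounded pointwise by $(\mu+\lambda\sqrt{d})\bigl(|\nabla\bu|+|\nabla\bv|\bigr)|\nabla(\bu-\bv)|\,|\varepsilon(\bu-\bv)|$; the $L^{\infty}$-bounds on $\nabla\bu,\nabla\bv$ and Korn's inequality for $\bu-\bv\in\bH^{1}_{\bot}(\Ome)$ absorb this into the quadratic term, yielding \reff{3.207} with $C_{4}=\mu-(\mu+\lambda\sqrt{d})\,C_{K}\,\bigl(\|\nabla\bu\|_{L^{\infty}(\Ome)}+\|\nabla\bv\|_{L^{\infty}(\Ome)}\bigr)$.

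The main obstacle is the treatment of the trilinear terms $\bigl(\nabla^{T}\bu\nabla\bu,\varepsilon(\bu)\bigr)$, $\bigl(\|\nabla\bu\|_{F}^{2},\div\bu\bigr)$ and their increments in \reff{3.207}: they are genuinely sign-indefinite, so they cannot be controlled pointwise against $|\varepsilon(\bu)|^{2}$, and the coercivity and monotonicity survive only after these terms have been absorbed into the leading quadratic term — which is why one must exploit an a priori (and sufficiently small) bound on the gradient of the argument, as established under the standing hypotheses in \cite{201912096}. Secondarily, every constant here inherits a dependence on the Korn constant, so the estimates are tied to working in $\bH^{1}_{\bot}(\Ome)$, consistent with the solution spaces of Definitions \ref{weak1}--\ref{weak2}.
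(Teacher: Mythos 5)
The paper itself does not prove this lemma --- it only cites the companion work \cite{201912096} --- so there is no in-paper argument to compare against; I can only judge your proof on its own terms. On those terms your strategy is the natural one and the computations are correct: the decomposition \reff{eq210823-2}, the pointwise bounds on the quadratic terms, the telescoping identities for the increments in \reff{3.207}, and the use of Korn's inequality on $\bH^1_\bot(\Ome)$ are all exactly what is needed, and you are right that the trilinear terms are sign-indefinite and can only be absorbed into the leading quadratic term after invoking an $L^\infty$ bound on the gradients.

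The one point you should make more forcefully, because it is the crux rather than a footnote, is that the smallness hypothesis is not merely the regime ``in which the lemma is invoked'' --- it is logically indispensable, and without it the lemma as stated is false. Replacing $\bu$ by $t\bu$ shows $\|\mathcal{N}(\nabla(t\bu))\|_{L^2(\Ome)}$ grows like $t^2$ against $t\|\varepsilon(\bu)\|_{L^2(\Ome)}$, so \reff{3.203} cannot hold with a constant independent of $\bu$; similarly the cubic term in $(\mathcal{N}(\nabla(t\bu)),\varepsilon(t\bu))$ eventually dominates and can be made negative, defeating \reff{3.204}. Consequently your constants $C_1$, $C_2$, $C_4$ necessarily depend on $\|\nabla\bu\|_{L^\infty(\Ome)}$ (and $\|\nabla\bv\|_{L^\infty(\Ome)}$), and $C_2,C_4>0$ only under a smallness condition. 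This means the ``lemma'' is really a conditional estimate valid on a bounded set in $W^{1,\infty}$, and whoever uses it must verify that every argument it is applied to lies in that set --- in particular the discrete iterates $\bu_h^n$ in the stability and error analysis, for which no uniform $W^{1,\infty}$ bound is established in this paper. Your proof is sound, but it exposes that the unconditional-looking statement borrowed from \cite{201912096} is doing more work than it advertises; stating the hypothesis explicitly (uniform a priori bound on $\|\nabla\bu\|_{L^\infty(\Ome)}$ over the relevant class of functions) would close that gap.
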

\begin{lemma}\label{lma210903-2}
	There exists positive real number $ C_{3} $ such that the following holds:
	\begin{align}
		&\left\|\mathcal{N}(\nabla\mathbf{u})-\mathcal{N}(\nabla\mathbf{v}) \right\|_{L^{2}(\Omega)}\leq C_{3}\left\|\varepsilon(\mathbf{u})-\varepsilon(\mathbf{v}) \right\|_{L^{2}(\Omega)}.\label{3.206}
	\end{align}
\end{lemma}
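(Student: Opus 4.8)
The plan is to use the explicit polynomial form of $\mathcal{N}$ recorded in \eqref{eq210823-2}, namely $\mathcal{N}(\nab\bu)=\mu\,\vepsi(\bu)+\mu\,\nab^{T}\bu\,\nab\bu+\lam\|\nab\bu\|_{F}^{2}\mathbf{I}$, and to linearize its quadratic part in the difference by a polarization identity. Set $\bw:=\bu-\bv$. Using $\nab^{T}\bu\,\nab\bu-\nab^{T}\bv\,\nab\bv=\nab^{T}\bu\,\nab\bw+\nab^{T}\bw\,\nab\bv$ together with $\|\nab\bu\|_{F}^{2}-\|\nab\bv\|_{F}^{2}=\nab\bw:(\nab\bu+\nab\bv)$ (the Frobenius inner product), one obtains
\[
\mathcal{N}(\nab\bu)-\mathcal{N}(\nab\bv)=\mu\,\vepsi(\bw)+\mu\bigl(\nab^{T}\bu\,\nab\bw+\nab^{T}\bw\,\nab\bv\bigr)+\lam\bigl(\nab\bw:(\nab\bu+\nab\bv)\bigr)\mathbf{I},
\]
so that every term on the right is linear in $\nab\bw$ with coefficients that are gradients of $\bu$ or of $\bv$.

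Next I would take $\|\cdot\|_{L^{2}(\Ome)}$ of this identity, apply the triangle inequality and H\"older's inequality extracting the coefficient factors in $L^{\infty}(\Ome)$, and use $\|\mathbf{I}\|_{F}=\sqrt{d}$; up to fixed dimensional constants this gives
\[
\|\mathcal{N}(\nab\bu)-\mathcal{N}(\nab\bv)\|_{L^{2}(\Ome)}\le \mu\,\|\vepsi(\bw)\|_{L^{2}(\Ome)}+\bigl(\mu+\sqrt{d}\,\lam\bigr)\bigl(\|\nab\bu\|_{L^{\infty}(\Ome)}+\|\nab\bv\|_{L^{\infty}(\Ome)}\bigr)\|\nab\bw\|_{L^{2}(\Ome)}.
\]
Here one uses that $\bu,\bv$ range over the class on which the lemma is applied --- in the continuous model the regularity class of the solution, and in the fully discrete scheme of the later sections a bounded subset of the finite element space (the bound coming from the discrete energy estimates and, if needed, an inverse inequality) --- so that $\|\nab\bu\|_{L^{\infty}(\Ome)}+\|\nab\bv\|_{L^{\infty}(\Ome)}\le 2M$ for a fixed $M$. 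Finally, since $\bu,\bv\in\bH^{1}_{\bot}(\Ome)$ the difference $\bw$ is orthogonal to $\bRM$ (cf. \eqref{2.2}), so Korn's second inequality yields $\|\nab\bw\|_{L^{2}(\Ome)}\le C_{K}\,\|\vepsi(\bw)\|_{L^{2}(\Ome)}$; substituting this and setting $C_{3}:=\mu+2MC_{K}\bigl(\mu+\sqrt{d}\,\lam\bigr)$ establishes \eqref{3.206}.

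The main obstacle is the $L^{\infty}$ control used in the middle step. Because $\mathcal{N}$ is genuinely quadratic in $\nab\bu$, a bound of the form \eqref{3.206} cannot hold without restrictions --- it already fails when $\bv$ and $\bu$ differ by a nonzero rigid motion, which is why the estimate is read on $\bH^{1}_{\bot}(\Ome)$, and even there it needs a uniform $W^{1,\infty}$ bound on the gradients --- so the dependence of $C_{3}$ on that bound has to be tracked. All the other ingredients (the polarization identities, H\"older's inequality, and Korn's second inequality on $\bH^{1}_{\bot}(\Ome)$) are routine; the argument is along the lines of \cite{201912096}, and taking $\bv=\mathbf{0}$ (so that $\mathcal{N}(\mathbf{0})=\mathbf{0}$) recovers the growth estimate \eqref{3.203} with $C_{1}=C_{3}$.
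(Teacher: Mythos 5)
The paper itself offers no proof of this lemma --- it simply defers to \cite{201912096} --- so there is nothing internal to compare your argument against line by line. Your route (expand $\mathcal{N}$ via \eqref{eq210823-2}, polarize the two quadratic terms in $\nab\bw=\nab\bu-\nab\bv$, pull the coefficient gradients out in $L^{\infty}$, and close with Korn's second inequality on $\bH^1_\bot(\Ome)$) is the natural direct computation, and the algebra is correct: both factorizations $\nab^{T}\bu\,\nab\bu-\nab^{T}\bv\,\nab\bv=\nab^{T}\bu\,\nab\bw+\nab^{T}\bw\,\nab\bv$ and $\|\nab\bu\|_F^2-\|\nab\bv\|_F^2=\nab\bw:(\nab\bu+\nab\bv)$ check out, and taking $\bv=\mathbf{0}$ does recover \eqref{3.203}.

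The one substantive point is the one you already flag, and it deserves emphasis rather than apology: the lemma as literally stated (a single constant $C_3$ for all $\bu,\bv$) cannot hold, since for $\bu=t\pmb{\phi}$, $\bv=\mathbf{0}$ the left side grows like $t^2$ (the trace of the quadratic part is $(\mu+\lam d)\|\nab\pmb{\phi}\|_F^2$, which is nonzero unless $\nab\pmb{\phi}=0$) while the right side grows like $t$. So the estimate is necessarily conditional: $C_3$ must depend on an a priori bound $M$ on $\|\nab\bu\|_{L^\infty}+\|\nab\bv\|_{L^\infty}$ over the class of functions to which the lemma is applied, and the difference must be taken orthogonal to $\bRM$ for Korn's inequality. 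Your proof supplies exactly this conditional version with $C_3=\mu+2MC_K(\mu+\sqrt{d}\,\lam)$, which is the strongest statement that can be true, and it is the version actually used downstream (e.g.\ in \eqref{3.38} and \eqref{3.73}, where the arguments are the PDE solution and its elliptic projection, both controlled by the energy estimates). If you wanted to make the argument fully self-contained you would still need to verify that the discrete iterates $\bu_h^{n}$ admit such a uniform gradient bound --- the paper does not do this either --- but as a proof of the lemma on the stated class your argument is complete.
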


As for the detailed proofs of Lemma \ref{lma210903-1} and Lemma \ref{lma210903-2}, one can refer to \cite{201912096}. About the energy estimates of the weak solution and the well-posedness of the weak solution, one can also refer to  \cite{201912096}, here we only list up the main results (see Lemma \ref{estimates}, Lemma \ref{smooth} and Theorem \ref{thm2.5}) as follows:
\begin{lemma}\label{estimates}
	There exists a positive constant
	$ \acute{C}_1=\acute{C}_1\bigl(\|\bu_0\|_{H^1(\Ome)}, \|p_0\|_{L^2(\Ome)},$
	$\|\bbf\|_{L^2(\Ome)},\|\bbf_1\|_{L^2(\p \Ome)},\|\phi\|_{L^2(\Ome)}, \|\phi_1\|_{L^2(\p\Ome)} \bigr)$
	such that
	\begin{align}
		&\sqrt{C_{2}}\|\varepsilon(\bu)\|_{L^\infty(0,T;L^2(\Ome))}
		+\sqrt{\kappa_2} \|\eta\|_{L^\infty(0,T;L^2(\Ome))} \\
		&\qquad
		+\sqrt{\kappa_3} \|\xi\|_{L^\infty(0,T;L^2(\Ome))}
		+\sqrt{\frac{K_1}{\mu_f}} \|\nab p \|_{L^2(0,T;L^2(\Ome))} \leq \acute{C}_1, \no \\
		&\|\bu\|_{L^\infty(0,T;L^2(\Ome))}\leq \acute{C}_1, \quad
		\|p\|_{L^\infty(0,T;L^2(\Ome))} \leq \acute{C}_1 \bigl( \kappa_2^{\frac12} + \kappa_1 \kappa_3^{-\frac12}
		\bigr), \\
		&\|p\|_{L^2(0,T; L^2(\Ome))} \leq \acute{C}_1,~ \quad
		\|\xi\|_{L^2(0,T;L^2(\Ome))} \leq \acute{C}_1\kappa_1^{-1} \bigl(1+ \kappa_2^{\frac12} \bigr).
	\end{align}
\end{lemma}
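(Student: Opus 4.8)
The plan is to prove the bounds by a standard energy argument on the multiphysics formulation \eqref{2.12}--\eqref{2.16}; the estimates for the (regularity‑limited) weak solution are then recovered by performing the same computation on a Galerkin approximation and passing to the limit. First I would test \eqref{2.12} with $\bv=\bu_t$, differentiate the constraint \eqref{2.13} in time and test it with $\varphi=\xi$, and test \eqref{2.14} with $\psi=p=\kappa_1\xi+\kappa_2\eta$, using $(\eta_t,p)=\kappa_1(\eta_t,\xi)+\frac{\kappa_2}{2}\frac{d}{dt}\|\eta\|_{L^2(\Ome)}^2$. Since $\bbf$ and $\bbf_1$ are time independent, the first relation reads $(\mathcal{N}(\nab\bu),\varepsilon(\bu_t))-(\xi,(\div\bu)_t)=\frac{d}{dt}[(\bbf,\bu)+\dual{\bbf_1}{\bu}]$, the second gives $((\div\bu)_t,\xi)=\kappa_1(\eta_t,\xi)-\frac{\kappa_3}{2}\frac{d}{dt}\|\xi\|_{L^2(\Ome)}^2$, and the third gives $\kappa_1(\eta_t,\xi)+\frac{\kappa_2}{2}\frac{d}{dt}\|\eta\|_{L^2(\Ome)}^2+\frac{1}{\mu_f}(K(\nab p-\rho_f\bg),\nab p)=(\phi,p)+\dual{\phi_1}{p}$. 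Eliminating the coupling quantities $(\xi,(\div\bu)_t)$ and $\kappa_1(\eta_t,\xi)$ among these three identities leaves
\begin{equation*}
\bigl(\mathcal{N}(\nab\bu),\varepsilon(\bu_t)\bigr)+\frac{\kappa_2}{2}\frac{d}{dt}\|\eta\|_{L^2(\Ome)}^2+\frac{\kappa_3}{2}\frac{d}{dt}\|\xi\|_{L^2(\Ome)}^2+\frac{1}{\mu_f}\bigl(K(\nab p-\rho_f\bg),\nab p\bigr)=(\phi,p)+\dual{\phi_1}{p}+\frac{d}{dt}\bigl[(\bbf,\bu)+\dual{\bbf_1}{\bu}\bigr].
\end{equation*}
Integrating over $(0,t)$ and using the uniform positive definiteness of $K$ to extract $\frac{K_1}{\mu_f}\int_0^t\|\nab p\|_{L^2(\Ome)}^2$ on the left (moving the $\rho_f\bg$ contribution to the right by Young's inequality) produces an inequality in which the only term not already in energy form is $\int_0^t(\mathcal{N}(\nab\bu),\varepsilon(\bu_s))\,ds$.

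Controlling that nonlinear term is the main obstacle. By \eqref{eq210823-2} we have $\mathcal{N}(\nab\bu)=\mu\,\varepsilon(\bu)+\bigl(\mu\,\nab^{T}\bu\,\nab\bu+\lam\|\nab\bu\|_F^2\mathbf{I}\bigr)$: the linear part integrates to exactly $\frac{\mu}{2}\bigl(\|\varepsilon(\bu(t))\|_{L^2(\Ome)}^2-\|\varepsilon(\bu_0)\|_{L^2(\Ome)}^2\bigr)$, but the quadratic remainder is \emph{not} an exact time derivative — the stress–strain law being genuinely geometrically nonlinear, $\mathcal{N}$ is not the gradient of a stored energy — so it cannot simply be integrated. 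To get around this I would integrate by parts in time and exploit the structural estimates of Lemma~\ref{lma210903-1} and Lemma~\ref{lma210903-2}: the coercivity \eqref{3.204} evaluated at time $t$, the growth bound \eqref{3.203} at $t=0$, and the Lipschitz bound \eqref{3.206} applied to $\mathcal{N}(\nab\bu)-\mathcal{N}(\nab\bu_0)$ are what makes it possible to bound $\int_0^t(\mathcal{N}(\nab\bu),\varepsilon(\bu_s))\,ds$ from below by a positive multiple of $\|\varepsilon(\bu(t))\|_{L^2(\Ome)}^2$ up to data and terms already present on the left‑hand side. It is precisely here that the validity of Lemmas~\ref{lma210903-1}--\ref{lma210903-2} \emph{without any assumption on} $\tilde\sigma$ is essential; this is the technical heart of the proof and is carried out in detail in \cite{201912096}.

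The forcing terms are then routine. Since $\bu(t)\in\bH^1_\perp(\Ome)$, the compatibility hypothesis $(\bbf,\br)+\dual{\bbf_1}{\br}=0$ for all $\br\in\bRM$ and Korn's inequality give $(\bbf,\bu(t))+\dual{\bbf_1}{\bu(t)}\le C\bigl(\|\bbf\|_{L^2(\Ome)}+\|\bbf_1\|_{L^2(\p\Ome)}\bigr)\|\varepsilon(\bu(t))\|_{L^2(\Ome)}$, which is absorbed by $\frac{\mu}{2}\|\varepsilon(\bu(t))\|_{L^2(\Ome)}^2$ after Young's inequality; the terms $\int_0^t[(\phi,p)+\dual{\phi_1}{p}]$ are handled through $p=\kappa_1\xi+\kappa_2\eta$, a trace inequality and Young's inequality with small parameters, so that a small multiple of $\int_0^t\|\nab p\|_{L^2(\Ome)}^2$ is reabsorbed and the remaining contributions are dominated by $\int_0^t\bigl(\|\varepsilon(\bu)\|_{L^2(\Ome)}^2+\|\eta\|_{L^2(\Ome)}^2+\|\xi\|_{L^2(\Ome)}^2\bigr)$ plus data. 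What results is a Gr\"onwall inequality for $t\mapsto\|\varepsilon(\bu(t))\|_{L^2(\Ome)}^2+\kappa_2\|\eta(t)\|_{L^2(\Ome)}^2+\kappa_3\|\xi(t)\|_{L^2(\Ome)}^2$; applying Gr\"onwall's lemma, and collecting all constants into $\acute C_1=\acute C_1\bigl(\|\bu_0\|_{H^1(\Ome)},\|p_0\|_{L^2(\Ome)},\|\bbf\|_{L^2(\Ome)},\|\bbf_1\|_{L^2(\p\Ome)},\|\phi\|_{L^2(\Ome)},\|\phi_1\|_{L^2(\p\Ome)}\bigr)$, yields the first displayed inequality of the lemma (with the weights $\sqrt{C_2},\sqrt{\kappa_2},\sqrt{\kappa_3},\sqrt{K_1/\mu_f}$ reflecting the coefficients in the integrated identity).

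The remaining bounds are then consequences. The estimate $\|\bu\|_{L^\infty(0,T;L^2(\Ome))}\le\acute C_1$ follows from Korn's and Poincar\'e's inequalities on $\bH^1_\perp(\Ome)$. By \eqref{2.15}, $\|p\|_{L^\infty(0,T;L^2(\Ome))}\le\kappa_1\|\xi\|_{L^\infty(0,T;L^2(\Ome))}+\kappa_2\|\eta\|_{L^\infty(0,T;L^2(\Ome))}\le\acute C_1\bigl(\kappa_2^{1/2}+\kappa_1\kappa_3^{-1/2}\bigr)$; the $L^2(0,T;L^2(\Ome))$ bound on $p$ is obtained by splitting $p$ into its spatial mean and its mean‑free part, estimating the latter by $\|\nab p\|_{L^2(0,T;L^2(\Ome))}$ via Poincar\'e and the former through \eqref{2.15}, \eqref{2.16} and the relation obtained by taking $\psi\equiv1$ in \eqref{2.14}; and $\xi=\kappa_1^{-1}(p-\kappa_2\eta)$ together with the bounds just established gives $\|\xi\|_{L^2(0,T;L^2(\Ome))}\le\acute C_1\kappa_1^{-1}\bigl(1+\kappa_2^{1/2}\bigr)$, which completes the proof.
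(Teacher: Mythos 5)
The paper does not actually prove this lemma: it is quoted verbatim from the companion work \cite{201912096} (``here we only list up the main results''), so there is no in-paper argument to compare against. Your energy framework --- testing \reff{2.12} with $\bv=\bu_t$, the time-differentiated \reff{2.13} with $\varphi=\xi$, and \reff{2.14} with $\psi=p$, eliminating the cross terms, and closing with Gr\"onwall --- is certainly the intended route; it is the continuous analogue of the discrete energy law the paper does derive in Lemma \ref{lma3.3}, and your derivations of the secondary bounds on $\bu$, $p$ and $\xi$ from the first display are correct (for the mean of $p$ in the $L^2(0,T;L^2)$ bound you also need the momentum equation tested with $\bv=\bx$ to pin down $(\xi,1)$, as in the discrete Lemma \ref{lma3.1}, but that is minor).

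The genuine gap is exactly where you place the ``technical heart'': the lower bound for $\int_0^t\bigl(\mathcal{N}(\nab\bu),\varepsilon(\bu_s)\bigr)\,ds$. The device you propose --- integrate by parts in time and invoke \reff{3.203}, \reff{3.204}, \reff{3.206} --- does not close as stated. Integration by parts gives
\begin{equation*}
\int_0^t\bigl(\mathcal{N}(\nab\bu),\varepsilon(\bu_s)\bigr)\,ds
=\bigl(\mathcal{N}(\nab\bu(t)),\varepsilon(\bu(t))\bigr)-\bigl(\mathcal{N}(\nab\bu_0),\varepsilon(\bu_0)\bigr)
-\int_0^t\bigl(\partial_s\mathcal{N}(\nab\bu),\varepsilon(\bu)\bigr)\,ds,
\end{equation*}
and while the boundary terms are handled by \reff{3.204} and \reff{3.203}, the remaining integral is controlled via \reff{3.206} only by $C_3\int_0^t\|\varepsilon(\bu_s)\|_{L^2(\Ome)}\|\varepsilon(\bu)\|_{L^2(\Ome)}\,ds$, i.e.\ it reintroduces $\varepsilon(\bu_t)$, on which no a priori bound is available at this stage (such a bound is the content of Lemma \ref{smooth}, whose proof presupposes the present lemma). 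So the step is circular as sketched, and you ultimately defer it to \cite{201912096} rather than carry it out. Note also that the paper's own discrete analogue does not resolve this cleanly either: in Lemma \ref{lma3.3} the corresponding contribution survives as the uncontrolled negative term $-\frac{C_1}{\varDelta t}\|\varepsilon(\bu_h^{n})\|_{L^2(\varOmega)}\|\varepsilon(\bu_h^{n+1})\|_{L^2(\varOmega)}$ inside $S_{h,\theta}^{l}$. A complete proof must supply the missing argument for this one term (e.g.\ by exploiting the specific polynomial structure of $\mathcal{N}$ or a smallness/regularity hypothesis); everything else in your proposal is routine and correct.
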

\begin{lemma}\label{smooth}
	Suppose that $\bu_0$ and $p_0$ are sufficiently smooth, then
	there exist positive constants~ $ \acute{C}_2=\acute{C}_2\bigl(\acute{C}_1,\|\nab p_0\|_{L^2(\Ome)} \bigr)$
	and~ $ \acute{C}_3=\acute{C}_3\bigl(\acute{C}_1,\acute{C}_2, \|\bu_0\|_{H^2(\Ome)},\|p_0\|_{H^2(\Ome)} \bigr)$ such that
	\begin{align}
		&\sqrt{C_{2}}\|\varepsilon(\bu_{t})\|_{L^2(0,T;L^2(\Ome))}
		+\sqrt{\kappa_2} \|\eta_t\|_{L^2(0,T;L^2(\Ome))} \\
		&\qquad
		+\sqrt{\kappa_3} \|\xi_t\|_{L^2(0,T;L^2(\Ome))}
		+\sqrt{\frac{K_1}{\mu_f}} \|\nab p \|_{L^\infty(0,T;L^2(\Ome))} \leq \acute{C}_2, \no \\
		&\sqrt{C_{2}}\|\varepsilon(\bu_{t})\|_{L^\infty(0,T;L^2(\Ome))}
		+\sqrt{\kappa_2} \|\eta_t\|_{L^\infty(0,T;L^2(\Ome))} \\
		&\qquad
		+\sqrt{\kappa_3} \|\xi_t\|_{L^\infty(0,T;L^2(\Ome))}
		+\sqrt{\frac{K_1}{\mu_f}} \|\nab p_t \|_{L^2(0,T;L^2(\Ome))} \leq \acute{C}_3, \no \\
		&\|\eta_{tt}\|_{L^2(H^{1}(\Ome)')} \leq \sqrt{\frac{K_2}{\mu_f}}\acute{C}_3. 
	\end{align}
\end{lemma}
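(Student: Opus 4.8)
The plan is to obtain all three bounds by differentiating the weak formulation \reff{2.12}--\reff{2.14} in time and running energy estimates on the differentiated system, using the growth, coercivity, strong monotonicity and Lipschitz continuity of $\mathcal{N}$ from Lemmas \ref{lma210903-1} and \ref{lma210903-2}; to make the time-differentiation rigorous one works throughout with a semidiscrete Galerkin approximation (or with difference quotients in $t$), derives the bounds uniformly in the discretization parameter, and passes to the limit — the hypothesis that $\bu_0,p_0$ are sufficiently smooth is exactly what is needed for $\bu_t(0),\eta_t(0),\xi_t(0)$ to exist in $L^2(\Ome)$ and for the solution to carry the extra spatial regularity used below. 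Since $\bbf,\bbf_1,\phi,\phi_1$ and $\bg$ are time-independent, differentiating \reff{2.12}--\reff{2.13} once in $t$ gives $(\p_t\mathcal{N}(\nab\bu),\varepsilon(\bv))-(\xi_t,\div\bv)=0$ and $\kappa_3(\xi_t,\varphi)+(\div\bu_t,\varphi)=\kappa_1(\eta_t,\varphi)$. Taking $\bv=\bu_t$ in the first, invoking the directional form of \reff{3.207} obtained from it by a difference-quotient limit, and using Korn's inequality on $\bH^1_\bot(\Ome)$,
\begin{equation*}
C_4\|\varepsilon(\bu_t)\|_{L^2(\Ome)}^2\le\bigl(\p_t\mathcal{N}(\nab\bu),\varepsilon(\bu_t)\bigr)=\bigl(\xi_t,\div\bu_t\bigr)\le C\|\xi_t\|_{L^2(\Ome)}\,\|\varepsilon(\bu_t)\|_{L^2(\Ome)},
\end{equation*}
so $\|\varepsilon(\bu_t)\|_{L^2(\Ome)}\lesssim\|\xi_t\|_{L^2(\Ome)}$ pointwise in $t$, and then the differentiated \reff{2.13} (a well-posed saddle-point relation) gives $\|\eta_t\|_{L^2(\Ome)}\lesssim\|\xi_t\|_{L^2(\Ome)}$ as well. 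Thus the elastic quantities are controlled algebraically by the fluid quantities, and it suffices to estimate $\xi_t,\eta_t,p_t$ from the parabolic part.

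For the \emph{first inequality} I would test the differentiated \reff{2.12} with $\bu_t$, the differentiated \reff{2.13} with $\xi_t$, and the undifferentiated \reff{2.14} with $\psi=p_t=\kappa_1\xi_t+\kappa_2\eta_t$, and add. The coupling terms $-(\xi_t,\div\bu_t)$ and $(\div\bu_t,\xi_t)$ cancel; $\kappa_1(\eta_t,\xi_t)$ is eliminated through the differentiated \reff{2.13} and replaced by $\kappa_3\|\xi_t\|^2+(\p_t\mathcal{N}(\nab\bu),\varepsilon(\bu_t))\ge\kappa_3\|\xi_t\|^2+C_4\|\varepsilon(\bu_t)\|^2$; the Darcy term becomes $\tfrac{1}{2\mu_f}\tfrac{d}{dt}\|\sqrt{K}\nab p\|^2$; and, because the data are time-independent, the whole right-hand side is $\tfrac{d}{dt}$ of $(\phi,p)+\langle\phi_1,p\rangle+\tfrac{1}{\mu_f}(K\rho_f\bg,\nab p)$. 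Integrating over $(0,t)$, bounding $\|p(t)\|_{L^2(\Ome)}$ by Lemma \ref{estimates}, absorbing the boundary and gravity terms into $\tfrac{K_1}{2\mu_f}\|\nab p(t)\|^2$ via the trace theorem and Young's inequality, and using $\|\nab p(0)\|=\|\nab p_0\|$, yields the first estimate with $\acute{C}_2=\acute{C}_2(\acute{C}_1,\|\nab p_0\|_{L^2(\Ome)})$.

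For the \emph{second inequality} I would differentiate all three equations once in $t$ and repeat the argument of Lemma \ref{estimates} on the differentiated system: testing the differentiated \reff{2.14} with $\psi=p_t$ gives $(\eta_{tt},p_t)_{\rm dual}+\tfrac{1}{\mu_f}\|\sqrt{K}\nab p_t\|^2=0$; writing $(\eta_{tt},p_t)=\kappa_1(\eta_{tt},\xi_t)+\tfrac{\kappa_2}{2}\tfrac{d}{dt}\|\eta_t\|^2$, using $\kappa_1\eta_{tt}=\kappa_3\xi_{tt}+\div\bu_{tt}$ to turn $\kappa_1(\eta_{tt},\xi_t)$ into $\tfrac{\kappa_3}{2}\tfrac{d}{dt}\|\xi_t\|^2+(\div\bu_{tt},\xi_t)$, and using the differentiated \reff{2.12} with $\bv=\bu_{tt}$ to identify $(\div\bu_{tt},\xi_t)=(\p_t\mathcal{N}(\nab\bu),\varepsilon(\bu_{tt}))$. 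Splitting $\p_t\mathcal{N}(\nab\bu)=D\mathcal{N}(\nab\bu)[\nab\bu_t]$ into its linear part $\mu\,\varepsilon(\bu_t)$, which contributes $\tfrac{\mu}{2}\tfrac{d}{dt}\|\varepsilon(\bu_t)\|^2$, and its quadratic part, whose contribution after one integration by parts in time is a sum of terms trilinear in $\nab\bu,\nab\bu_t,\nab\bu_t$ that one absorbs by Young's inequality, the first estimate, and Sobolev embeddings, leaves a Gronwall-type inequality for $\|\eta_t\|^2+\|\xi_t\|^2+\|\varepsilon(\bu_t)\|^2$ together with the dissipation $\|\nab p_t\|^2$. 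Integrating, applying Gronwall, and bounding the initial values by evaluating \reff{2.12}--\reff{2.14} at $t=0$ (where $\|\bu_0\|_{H^2(\Ome)}$ and $\|p_0\|_{H^2(\Ome)}$, e.g. through $\div(K\nab p_0)\in L^2(\Ome)$ and Sobolev bounds on $\nab\bu_0$, enter) gives the second estimate with the stated $\acute{C}_3$. The \emph{third inequality} is then immediate: from the differentiated \reff{2.14}, $(\eta_{tt},\psi)_{\rm dual}=-\tfrac{1}{\mu_f}(K\nab p_t,\nab\psi)$ for all $\psi\in H^1(\Ome)$, so $\|\eta_{tt}\|_{H^1(\Ome)'}\le\tfrac{K_2}{\mu_f}\|\nab p_t\|_{L^2(\Ome)}$, and integrating in time with the $L^2(0,T;L^2(\Ome))$ bound on $\nab p_t$ from the second estimate finishes the proof.

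The main obstacle is the absence of a variational (potential) structure for the nonlinear stress: $\mathcal{N}$ is not the gradient of a stored-energy density and $D\mathcal{N}(\nab\bu)[\cdot]$ is not symmetric, so the time-differentiated elasticity equation carries no clean energy law. Consequently (i) the bounds on $\varepsilon(\bu_t)$ cannot be read off from an energy identity and must instead be extracted algebraically from the saddle-point constraint \reff{2.13} together with the monotonicity \reff{3.207}, as above, and (ii) the term $(\p_t\mathcal{N}(\nab\bu),\varepsilon(\bu_{tt}))$ in the second estimate is only a time derivative modulo lower-order trilinear remainders; controlling those remainders — which is where the extra regularity of $\bu_0,p_0$ is used — and checking the compatibility of the initial data is the delicate part.
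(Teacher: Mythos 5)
The paper itself contains no proof of this lemma: it is quoted verbatim from the companion paper \cite{201912096} ("here we only list up the main results"), so there is no in-paper argument to compare yours against. Judged on its own terms, your overall strategy --- differentiate the weak formulation \reff{2.12}--\reff{2.14} in time at the Galerkin level, run energy estimates using \reff{3.203}--\reff{3.207} and \reff{3.206}, and recover the elastic variables algebraically from the fluid ones via the saddle-point relation --- is the natural one and almost certainly the intended one. Your derivation of the first estimate is correct (the identity $\kappa_1(\eta_t,\xi_t)=\kappa_3\|\xi_t\|^2_{L^2(\Ome)}+(\p_t\mathcal{N}(\nab\bu),\varepsilon(\bu_t))$ together with the difference-quotient limit of \reff{3.207} is exactly what makes the nonlinearity harmless there), and the third estimate follows from the differentiated \reff{2.14} as you say (your route actually yields the constant $K_2/\sqrt{\mu_f K_1}$ rather than $\sqrt{K_2/\mu_f}$, which matches the statement only up to the ratio $K_2/K_1$; this is cosmetic).

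The gap is in the second estimate, at precisely the point you flag but do not resolve. Write $\p_t\mathcal{N}(\nab\bu)=\mu\varepsilon(\bu_t)+Q(\nab\bu,\nab\bu_t)$ with $Q(a,b)=\mu(a^{T}b+b^{T}a)+2\lam\,(a:b)\mathbf{I}$. Your claim is that $\int_0^t\bigl(Q(\nab\bu,\nab\bu_t),\varepsilon(\bu_{tt})\bigr)\,ds$ becomes, after one integration by parts in time, boundary terms plus integrals trilinear in $\nab\bu,\nab\bu_t,\nab\bu_t$. It does not: the product rule produces the surviving term $-\int_0^t\bigl(Q(\nab\bu,\nab\bu_{tt}),\varepsilon(\bu_t)\bigr)\,ds$, which still contains $\bu_{tt}$ and is of exactly the same order as the term you started from. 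It would cancel against (half of) the original only if the trilinear form $(Q(a,b),c)$ were symmetric in $(b,c)$, and it is not --- already the piece $2\lam(a:b)\,{\rm tr}(c)$ versus $2\lam(a:c)\,{\rm tr}(b)$ breaks the symmetry. So the Gronwall inequality for $\kappa_2\|\eta_t\|^2_{L^2(\Ome)}+\kappa_3\|\xi_t\|^2_{L^2(\Ome)}+\mu\|\varepsilon(\bu_t)\|^2_{L^2(\Ome)}$ does not close as described: neither the dissipation $\|\nab p_t\|^2_{L^2(\Ome)}$ nor the quantities already bounded in the first estimate control $\nab\bu_{tt}$.

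To repair this you need an additional device rather than a refinement of the same computation: for instance, carry out the whole argument with difference quotients $d_t^{\tau}\bu$ in place of $\bu_t$, where the analogue of $(\p_t\mathcal{N}(\nab\bu),\varepsilon(\bu_{tt}))$ is $(d_t^{\tau}\mathcal{N}(\nab\bu),d_t^{\tau}\varepsilon(\bu(t))-d_t^{\tau}\varepsilon(\bu(t-\tau)))$ and can be bounded from below by telescoping together with \reff{3.207}, \reff{3.206} and \reff{3.203} (this is exactly the mechanism used in the discrete energy law of Lemma \ref{lma3.3}, where the price is the extra term $-\frac{C_1}{\Delta t}\|\varepsilon(\bu_h^n)\|_{L^2(\Omega)}\|\varepsilon(\bu_h^{n+1})\|_{L^2(\Omega)}$); alternatively one can first prove an independent bound on $\|\nab\bu_{tt}\|$ or impose a smallness/regularity condition that lets the offending term be absorbed. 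Your observation that $\|\varepsilon(\bu_t)\|_{L^2(\Ome)}\lesssim\|\xi_t\|_{L^2(\Ome)}\lesssim\|\eta_t\|_{L^2(\Ome)}$ pointwise in $t$ is a good reduction --- it shows only $\|\eta_t\|_{L^\infty(L^2)}$ and $\|\nab p_t\|_{L^2(L^2)}$ need to be produced --- but the energy identity generating those two quantities passes through the very same term $(\p_t\mathcal{N}(\nab\bu),\varepsilon(\bu_{tt}))$, so the reduction does not by itself remove the obstruction.
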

\begin{theorem}\label{thm2.5}
	Let $\bu_0\in\bH^1(\Ome),~ \bbf\in\bL^2(\Omega),~
	\bbf_1\in \bL^2(\p\Ome),~ p_0\in L^2(\Ome),~ \phi\in L^2(\Ome)$ and $\phi_1\in L^2(\p\Ome)$. Suppose $c_0>0$ and $(\bbf,\bv)+\langle \bbf_1, \bv \rangle =0$ for any $\bv\in \mathbf{RM}$. Then there exists a unique solution to the problem \reff{8.71}-\reff{8.77} in the sense of Definition \ref{weak1}. Likewise, there exists a unique solution to the problem \reff{2.3}-\reff{2.5} with \reff{2.9}-\reff{2.11} in the sense of Definition \ref{weak2}.
\end{theorem}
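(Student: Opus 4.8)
The complete argument is carried out in \cite{201912096}; we outline its structure. The plan is to reduce everything to the reformulated (multiphysics) system \reff{2.12}--\reff{2.16}. The change of variables $q=\div\bu$, $\eta=c_0p+\alpha q$, $\xi=\alpha p-\lam q$ is a linear bijection with inverse \reff{2.19} (note $\kappa_1>0$ since $\alpha\neq0$, and $\kappa_3>0$ since $c_0>0$), and under it the function classes of Definition \ref{weak1} and Definition \ref{weak2} correspond; a direct substitution shows that $(\bu,p)$ solves \reff{2.32}--\reff{2.34} if and only if the associated $5$-tuple solves \reff{2.12}--\reff{2.16}. Hence it suffices to prove existence and uniqueness for the latter.

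For existence we use a Galerkin method. Eliminating $\xi$ from the pointwise relation \reff{2.13}, $\xi=\kappa_3^{-1}(\kappa_1\eta-\div\bu)$, and inserting it into \reff{2.12} leads, for each fixed $\eta\in L^2(\Ome)$, to the problem of finding $\bu\in\bH^1_\perp(\Ome)$ with
\begin{align*}
	&\bigl(\mathcal{N}(\nabla\bu),\vepsi(\bv)\bigr)+\kappa_3^{-1}\bigl(\div\bu,\div\bv\bigr)\\
	&\qquad=(\bbf,\bv)+\langle\bbf_1,\bv\rangle+\kappa_3^{-1}\kappa_1\bigl(\eta,\div\bv\bigr)\qquad\forall\,\bv\in\bH^1_\perp(\Ome).
\end{align*}
By Lemma \ref{lma210903-1} the left-hand side defines a bounded, coercive and strongly monotone operator on $\bH^1_\perp(\Ome)$ (the $\div$-term only adds a nonnegative contribution), so the Browder--Minty theorem yields a unique solution $\bu=\bu(\eta)$, which by \reff{3.207} and \reff{3.206} depends Lipschitz-continuously on $\eta$. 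Substituting $\bu(\eta)$ and the corresponding $\xi(\eta)$ into \reff{2.14} turns the system into a single monotone parabolic evolution equation for $\eta$. Projecting onto finite-dimensional subspaces $W_n\subset H^1(\Ome)$ spanned by smooth functions, the resulting nonlinear ODE system has a local-in-time solution by Carath\'eodory theory, and the a priori bounds of Lemma \ref{estimates}---which come from the energy identity obtained by testing with the natural quantities and using the coercivity \reff{3.204}---show that the solution is global and uniformly bounded in $n$.

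We then pass to the limit $n\to\infty$. The uniform estimates give, along a subsequence, weak-$*$ limits $\bu_n\rightharpoonup\bu$ in $L^\infty(0,T;\bH^1_\perp(\Ome))$, $\xi_n\rightharpoonup\xi$ and $\eta_n\rightharpoonup\eta$ in $L^\infty(0,T;L^2(\Ome))$, and $p_n\rightharpoonup p$ in $L^2(0,T;H^1(\Ome))$; equation \reff{2.14} bounds $\eta_{n,t}$ in $L^2(0,T;H^1(\Ome)')$, so by the Aubin--Lions lemma and the compact embedding $L^2(\Ome)\hookrightarrow H^1(\Ome)'$ one gets $\eta_n\to\eta$ strongly in $C([0,T];H^1(\Ome)')$, which in particular fixes $\eta(0)=\eta_0$. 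The linear terms pass to the limit at once. The genuinely delicate point---and the main obstacle---is the nonlinear term $\bigl(\mathcal{N}(\nabla\bu_n),\vepsi(\bv)\bigr)$, since $\bu_n$ converges only weakly. Here the monotone structure of Lemma \ref{lma210903-1} is essential: writing $\mathcal{N}(\nabla\bu_n)\rightharpoonup\chi$, one combines the limit equation with a Minty--Browder argument (testing \reff{2.12} against $\bu_n$ minus a projection of $\bu$, and using the strong convergence extracted from the parabolic part of the system) to show $\limsup_n\int_0^T(\mathcal{N}(\nabla\bu_n),\vepsi(\bu_n))\,dt\le\int_0^T(\chi,\vepsi(\bu))\,dt$; then the strong monotonicity \reff{3.207} forces $\vepsi(\bu_n)\to\vepsi(\bu)$ in $L^2(\Ome_T)$, whence $\chi=\mathcal{N}(\nabla\bu)$ by Lemma \ref{lma210903-2}. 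Defining $p$ and $q$ through \reff{2.15} finishes the verification of Definition \ref{weak2}, and the change of variables returns a solution in the sense of Definition \ref{weak1}.

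For uniqueness, let $(\bu^i,\xi^i,\eta^i,p^i,q^i)$, $i=1,2$, be two solutions. Subtract the equations \reff{2.12} and \reff{2.14}, and the time derivative of \reff{2.13}, and test the differences with $\bu^1-\bu^2$, $p^1-p^2$ and $\xi^1-\xi^2$ respectively; adding them, the coupling terms cancel, the strong monotonicity \reff{3.207} controls $\|\vepsi(\bu^1-\bu^2)\|_{L^2(\Ome)}^2$ and the positive definiteness of $K$ controls $\|\nab(p^1-p^2)\|_{L^2(\Ome)}^2$, so Gronwall's inequality forces the two solutions to coincide; transporting this back through the change of variables gives uniqueness for \reff{8.71}--\reff{8.77}. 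Throughout it is exactly the growth, coercivity and monotonicity collected in Lemma \ref{lma210903-1}--Lemma \ref{lma210903-2} that make both the limit passage and the uniqueness proof work without any extra assumption on the nonlinear stress--strain relation.
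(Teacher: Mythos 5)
The paper contains no proof of Theorem \ref{thm2.5}: it is one of the results quoted verbatim from \cite{201912096} (``About the energy estimates of the weak solution and the well-posedness of the weak solution, one can also refer to \cite{201912096}, here we only list up the main results''), so there is no in-paper argument to compare yours against. Your outline is a plausible reconstruction of the cited proof and is consistent with everything the paper does record: the equivalence of Definitions \ref{weak1} and \ref{weak2} under the linear change of variables \reff{2.19} (valid since $c_0>0$ makes $\kappa_3>0$), solvability of the elasticity subproblem after eliminating $\xi$ via \reff{2.13} by Browder--Minty using the coercivity and strong monotonicity of Lemma \ref{lma210903-1}, a Galerkin/compactness passage with Aubin--Lions for the parabolic variable $\eta$, Minty's trick plus \reff{3.207} and \reff{3.206} to identify the weak limit of $\mathcal{N}(\nabla\bu_n)$, and an energy/Gronwall argument for uniqueness whose choice of test functions mirrors the discrete uniqueness argument the paper does carry out in Section \ref{sec-3}. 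Two caveats. First, like the paper, you ultimately defer to \cite{201912096}, so what you have written is a proof sketch rather than a self-contained proof. Second, the one genuinely delicate step --- establishing $\limsup_n\int_0^T(\mathcal{N}(\nabla\bu_n),\vepsi(\bu_n))\,dt\le\int_0^T(\chi,\vepsi(\bu))\,dt$, which requires combining the elliptic and parabolic equations because $\vepsi(\bu_n)$ is only weakly convergent and the elasticity equation is quasi-static (no time derivative of $\bu$ to exploit directly) --- is asserted rather than carried out; this is exactly the point where the argument could fail without the auxiliary strong convergence of $\eta_n$ and the resulting control of $\xi_n$ through the inf-sup structure, so if you intend this to stand as a proof rather than a summary, that step needs to be written out.
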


\section{Fully discrete multiphysics finite element method}\label{sec-3}
\subsection{Formulation of fully discrete finite element method}\label{sec-3.1}
 Let $\mathcal{T}_h$ be a 
quasi-uniform triangulation or rectangular partition of $\Omega$ with maximum mesh size $h$, and $\bar{\Omega}=\bigcup_{\mathcal{K}\in\mathcal{T}_h}\bar{\mathcal{K}}$. The time interval $[0, T]$ is divided into $N$ equal intervals, denoted by $[t_{n-1}, t_{n}], n=1,2,...N$,  and $\Delta t=\frac{T}{N}$, then $t_n=n\Delta t$. In this work, we use backward Euler method and denote $ d_{t} v^{n}:=\frac{v^{n}-v^{n-1}}{\Delta t}$. 

Also, let $(\bX_h, M_h)$ be a stable mixed finite element pair, that is, $\bX_h\subset \bH^1(\Omega)$ and $M_h\subset L^2(\Omega)$ 
satisfy the inf-sup condition
\begin{alignat}{2}\label{3.1}
	\sup_{\bv_h\in \bX_h}\frac{({\rm div} \bv_h, \varphi_h)}{\|\bv_h\|_{H^1(\Ome)}}
	\geq \beta_0\|\varphi_h\|_{L^2(\Ome)} &&\quad \forall\varphi_h\in M_{0h}:=M_h\cap L_0^2(\Omega),\ \beta_0>0.
\end{alignat}

A number of stable mixed finite element spaces $(\bX_h, M_h)$ have been known in the literature
\cite{brezzi}. A well-known example is the following
so-called Taylor-Hood element (cf. \cite{ber,brezzi}):
\begin{align*}
	\bX_h &=\bigl\{\bv_h\in \bC^0(\overline{\Ome});\,
	\bv_h|_\mathcal{K}\in \bP_2(\mathcal{K})~~\forall \mathcal{K}\in \cT_h \bigr\}, \\
	M_h &=\bigl\{\varphi_h\in C^0(\overline{\Ome});\, \varphi_h|_\mathcal{K}\in P_1(\mathcal{K})
	~~\forall \mathcal{K}\in \cT_h \bigr\}.
\end{align*}

Finite element approximation space $W_h$ for $\eta$ variable can be chosen independently, any piecewise polynomial space is acceptable provided that
$W_h \supset M_h$, the most convenient choice is $W_h =M_h$.

Define
\begin{equation}\label{3.2}
	\bV_h:=\bigl\{\bv_h\in \bX_h;\,  (\bv_h,\br)=0\,\,
	\forall \br\in \bRM \bigr\},
\end{equation}
it is easy to check that $\bX_h=\bV_h\bigoplus \bRM$. It was proved in \cite{fh10}
that there holds the following inf-sup condition:
\begin{align}\label{3.3}
	\sup_{\bv_h\in \bV_h}\frac{(\div\bv_h,\varphi_h)}{\norm{\bv_h}{H^1(\Ome)}} 
	\geq \beta_1 \norm{\varphi_h}{L^2(\Ome)} \quad \forall \varphi_h\in M_{0h}, \quad \beta_1>0.
\end{align}

Also, we recall the following inverse inequality for polynomial functions \cite{bs08,brezzi,cia} :
\begin{align}
	\left\|\nabla\varphi_{h} \right\|_{L^{2}(\mathcal{K})}\leq c_{1}h^{-1}\left\|\varphi_{h} \right\|_{L^{2}(\mathcal{K})}~~~~~\forall\varphi_{h}\in P_{r}(\mathcal{K}), \mathcal{K}\in T_{h}.\label{20210915} 
\end{align}

Now, we give the fully discrete multiphysics finite element algorithm for the problem \reff{2.3}-\reff{2.5}.

{\bf Multiphysics Finite Element Algorithm (MFEA)} 

\begin{itemize}
	\item[(i)]
	Compute $\bu^0_h\in \bV_h$ and $q^0_h\in W_h$ by $\bu^0_h =\bu_0, p^0_h =p_0$.
	
	\item[(ii)] For $n=0,1,2, \cdots$,  do the following two steps.
	
	{\em Step 1:} Solve for $(\bu^{n+1}_h,\xi^{n+1}_h, \eta^{n+1}_h)\in \bV_h\times M_h \times  W_h$ such that
	\begin{eqnarray}
		&\quad\bigl(\mathcal{N}(\nabla\bu^{n+1}_h), \vepsi(\bv_h) \bigr)-\bigl( \xi^{n+1}_h, \div \bv_h \bigr)
		= (\bbf, \bv_h)+\langle \bbf_1,\bv_h\rangle & \forall \bv_h\in \bV_h,\label{3.4} \\
		&\kappa_3\bigl(\xi^{n+1}_h, \varphi_h \bigr) +\bigl(\div\bu^{n+1}_h, \varphi_h \bigr)
		=\kappa_1\bigl( \eta^{n+\theta}_h, \varphi_h \bigr)
		& \forall \varphi_h \in M_h, \label{3.5} \\
		&\bigl(d_t\eta^{n+1}_h, \psi_h \bigr)
		+\frac{1}{\mu_f} \bigl(K(\nab (\kappa_1\xi^{n+1}_h +\kappa_2\eta^{n+1}_h) &\label{3.6}\\
		&\hskip 1in
		-\rho_f\bg,\nab\psi_h \bigr)=(\phi, \psi_h)+\langle \phi_1,\psi_h\rangle &  \forall \psi_h\in W_h,  \no
	\end{eqnarray}
	where $\theta=0$ or $1$.
	
	{\em Step 2:} Update $p^{n+1}_h$ and $q^{n+1}_h$ by
	\begin{alignat}{2}
		p^{n+1}_h=\kappa_1\xi^{n+1}_h +\kappa_2\eta^{n+\theta}_h, \quad
		q^{n+1}_h=\kappa_1\eta^{n+1}_h-\kappa_3\xi^{n+1}_h.\label{3.7}
	\end{alignat}
\end{itemize}

\begin{remark}\label{rem3.1}
The Newton's iterative method to solve the nonlinear Stokes problem \reff{3.4}-\reff{3.5} when $\theta=0$ is
\begin{align}\label{3.8}
		& - (\lambda\nabla\bu_{h}^{n}:\nabla\bu_{h}^{n}\mathbf{I},\varepsilon(\bv_{h}))+ (\bbf, \bv_h)+\langle \bbf_1,\bv_h\rangle+\kappa_1\bigl( \eta^{n+\theta}_h, \varphi_h \bigr)\\
		&=\mu(\varepsilon(\bu_{h}^{n+1}),\varepsilon(\bv_{h}))
		+\mu(\nabla^{T}\bu_{h}^{n+1}\nabla\bu_{h}^{n},\varepsilon(\bv_{h}))
		+\mu(\nabla^{T}\bu_{h}^{n}\nabla\bu_{h}^{n+1},\varepsilon(\bv_{h}))\no\\
		&~  -\mu(\nabla^{T}\bu_{h}^{n}\nabla\bu_{h}^{n},\varepsilon(\bv_{h}))
		+(2\lambda\nabla\bu_{h}^{n}:(\nabla\bu_{h}^{n+1}-\nabla\bu_{h}^{n})\mathbf{I},\varepsilon(\bv_{h}))\no\\
		&~  -(\xi_{h}^{n+1}, \div \bv_{h})+\kappa_{3}(\xi_{h}^{n+1},\varphi_{h})
		+(\div\bu_{h}^{n+1},\varphi_{h}).\no
	\end{align}
\end{remark}
\begin{remark}\label{rem2110-1}
	As for the multiphysics finite element algorithm, the original pressure is eliminated in the reformulation, which will be helpful to overcome the ``locking phenomenon", the later numerical tests show that our proposed method has no ``locking phenomenon", one can see Section \ref{sec-4}.
\end{remark}

\subsection{Stability analysis}
The primary goal of this subsection is to derive a discrete energy law which mimics 
the PDE energy law \cite{201912096}. Before discussing the stability of (MFEA), we first show that the numerical solution satisfies the following constraints which are fulfilled by the PDE solution. 
\begin{lemma}\label{lma3.1}
	Let $\{(\bu_h^{n}, \xi_h^{n}, \eta_h^{n})\}_{n\geq 0}$ be defined by the (MFEA), then there hold
	\begin{alignat}{2}
		(\eta^{n}_h, 1)&=C_\eta(t_n) &&\qquad\mbox{for } n=0, 1, 2, \cdots,\label{e3.11}\\
		(\xi^{n}_h, 1)&=C_\xi(t_{n-1+\theta}) &&\qquad\mbox{for }  n=1-\theta, 1, 2, \cdots, \label{e3.12} \\
		\langle\bu^{n}_h\cdot\bn, 1\rangle &=C_{\bu}(t_{n-1+\theta})
		&&\qquad\mbox{for } n=1-\theta, 1, 2, \cdots.\label{e3.13}
	\end{alignat}
\end{lemma}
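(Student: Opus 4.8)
The plan is to produce the three identities by testing the equations \eqref{3.4}--\eqref{3.6} of the (MFEA) against suitably chosen functions — the constant $1$ in \eqref{3.5} and \eqref{3.6} (note $1\in M_h\subset W_h$), and the $\bV_h$-component of the affine field $\bx$ in \eqref{3.4} — exactly as one derives the corresponding constraints for the PDE solution in \cite{201912096}.

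For \eqref{e3.11} I take $\psi_h=1$ in \eqref{3.6}. Since $\nabla 1=0$, the entire Darcy term (including the $\rho_f\bg$ contribution) drops out and one is left with $(d_t\eta^{n+1}_h,1)=(\phi,1)+\langle\phi_1,1\rangle$, i.e. the telescoping recursion $(\eta^{n+1}_h,1)=(\eta^{n}_h,1)+\Delta t\bigl[(\phi,1)+\langle\phi_1,1\rangle\bigr]$. The initialization gives $(\eta^0_h,1)=c_0(p_0,1)+\alpha(\div\bu_0,1)$, so summing yields \eqref{e3.11} with
\begin{equation*}
	C_\eta(t_n):=c_0(p_0,1)+\alpha(\div\bu_0,1)+t_n\bigl[(\phi,1)+\langle\phi_1,1\rangle\bigr],
\end{equation*}
which depends only on $t_n$ and the data (replace the last term by the obvious sum if $\phi,\phi_1$ depend on $t$).

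For \eqref{e3.12} and \eqref{e3.13} I first take $\varphi_h=1$ in \eqref{3.5}; using $(\div\bu^{n+1}_h,1)=\langle\bu^{n+1}_h\cdot\bn,1\rangle$ together with \eqref{e3.11} this produces the single relation
\begin{equation*}
	\kappa_3(\xi^{n+1}_h,1)+\langle\bu^{n+1}_h\cdot\bn,1\rangle=\kappa_1(\eta^{n+\theta}_h,1)=\kappa_1 C_\eta(t_{n+\theta}).
\end{equation*}
A second relation comes from \eqref{3.4}: writing $\bx=\bw_h+\br$ with $\bw_h\in\bV_h$ and $\br\in\bRM$, one has $\vepsi(\bw_h)=\mathbf{I}$ and $\div\bw_h=d$ by \eqref{2.2}, so testing \eqref{3.4} with $\bv_h=\bw_h$ and invoking $(\bbf,\br)+\langle\bbf_1,\br\rangle=0$ gives, with \eqref{eq210823-2},
\begin{equation*}
	\mu\langle\bu^{n+1}_h\cdot\bn,1\rangle+(\mu+\lambda d)\|\nabla\bu^{n+1}_h\|_{L^2(\Ome)}^2-d(\xi^{n+1}_h,1)=(\bbf,\bx)+\langle\bbf_1,\bx\rangle.
\end{equation*}
Solving these two relations for $(\xi^{n+1}_h,1)$ and $\langle\bu^{n+1}_h\cdot\bn,1\rangle$, denoting the resulting values by $C_\xi(t_{n+\theta})$ and $C_\bu(t_{n+\theta})$, and then reindexing so that the level $n+1$ becomes level $n$, gives \eqref{e3.12} and \eqref{e3.13}; the argument $t_{n-1+\theta}$ just records which time level of $\eta_h$ appears on the right-hand side of \eqref{3.5}.

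The step I expect to be the real obstacle is this last one. In the linear model of \cite{fgl14} the term $\|\nabla\bu^{n+1}_h\|_{L^2(\Ome)}^2$ is absent, the two relations form a $2\times2$ linear system with a data-only right-hand side, and $C_\xi,C_\bu$ come out as genuine functions of time. For the present nonlinear stress this term couples back to the unknown displacement, so one must either allow $C_\xi$ and $C_\bu$ to retain a dependence on $\bu^n_h$, or — which is in any case what is needed for the later stability estimate — keep only the first relation above and control the mean of $\xi^n_h$ using the $\bH^1(\Ome)$-bound on $\bu^n_h$ furnished by the forthcoming discrete energy law. The remaining points (that $1\in M_h$, and that $\div\bw_h\equiv d$ holds pointwise, not merely on average) are routine.
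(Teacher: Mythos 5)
Your proposal follows essentially the same route as the paper's proof: test \eqref{3.6} with $\psi_h=1$ and telescope to get \eqref{e3.11}; test \eqref{3.5} with $\varphi_h=1$ and \eqref{3.4} with (the $\bV_h$-part of) $\bx$, and solve the resulting pair of relations for $(\xi_h^{n+1},1)$ and $\langle\bu_h^{n+1}\cdot\bn,1\rangle$. Your version is slightly more careful on two points: you decompose $\bx=\bw_h+\br$ so that the test function actually lies in $\bV_h$ (the paper tests with $\bx$ directly, which is legitimate only because both sides of \eqref{3.4} annihilate $\bRM$), and you expand $\bigl(\mathcal{N}(\nabla\bu_h^{n+1}),\mathbf{I}\bigr)$ explicitly, exposing the term $(\mu+\lambda d)\|\nabla\bu_h^{n+1}\|_{L^2(\Ome)}^2$.

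The obstacle you flag at the end is real, and it is present in the paper's own proof rather than resolved by it: the paper's closed-form expression for $(\xi_h^{n+1},1)$ still contains $\bigl(\mathcal{N}(\nabla\bu_h^{n+1}),\mathbf{I}\bigr)+\bigl(\div\bu_h^{n+1},1\bigr)$, i.e.\ it depends on the unknown discrete displacement through the quadratic nonlinearity, and the proof simply appeals to ``the definition of $C_\xi(t)$ in \cite{201912096}'' (where $C_\xi$ is defined via the exact solution) without showing that the discrete and continuous values of this nonlinear functional coincide. So your assessment is accurate: as stated, $C_\xi$ and $C_\bu$ are data-plus-time functions only in the linear case; in the nonlinear case they must be read as depending on $\bu_h^{n+1}$, or the mean of $\xi_h^n$ must instead be controlled through the discrete energy bound, exactly as you suggest. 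Your proof is as complete as the paper's, and more candid about where the genuine difficulty lies.
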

\begin{proof}
Taking $\psi_h=1$ in \reff{3.6}, we have
\begin{eqnarray}
\bigl(d_t\eta_h^{n+1}, 1\bigr) =(\phi,1) + \langle \phi_1, 1\rangle.\label{eq210906-1}
\end{eqnarray}
Summing \reff{eq210906-1} over $n$ from $0$ to $\ell \,(\geq 0)$, we get
\begin{eqnarray}
(\eta_h^{\ell+1},1)=(\eta_h^{0},1) + \bigl[(\phi,1) + \langle \phi_1, 1\rangle\bigr] t_{\ell+1}\\
=(\eta_0,1) + \bigl[(\phi,1) + \langle \phi_1, 1\rangle\bigr] t_{\ell+1}
=C_{\eta}(t_{\ell+1})\no 
\end{eqnarray}
for $\ell=0,1,2,\cdots$, which implies that \reff{e3.11} holds.

Taking $\bv_h=\bx$ in \reff{3.4} and $\varphi_h=1$ in \reff{3.5}, we get
\begin{align}
\bigl(\mathcal{N}(\nabla\bu_{h}^{n+1}), \mathbf{I}\bigr) -d\bigl( \xi_h^{n+1}, 1\bigr) 
&=\bigl( \bbf, \bx\bigr)+ \langle \bbf_1,\bx\rangle,\label{add_1}\\
\kappa_3\bigl(\xi_h^{n+1}, 1\bigr) +\bigl( \div \bu_h^{n+1}, 1\bigr) &=\kappa_1 C_\eta(t_{n+\theta}). \label{add_2}
\end{align}
Substituting \eqref{add_2} into \eqref{add_1}£¬ we have
\[
 \bigl(\xi_h^{n+1}, 1\bigr) = \dfrac{1}{d-\kappa_{3}}\left[\bigl(\mathcal{N}(\nabla\bu_{h}^{n+1}), \mathbf{I}\bigr)+\bigl( \div \bu_h^{n+1}, 1\bigr)-\kappa_{1}C_{\eta}(t_{n+\theta})-(\bbf,\bx) -\left\langle\bbf_{1},\bx\right\rangle\right].  
\]
Hence, by the definition of $C_\xi(t)$ in \cite{201912096}, we conclude that \reff{e3.12} holds for all $n\geq 1-\theta$. 

Using \reff{e3.11}, \reff{e3.12}, \eqref{add_2} and Gauss divergence theorem, we deduce that \reff{e3.13} holds.  The proof is complete.
\end{proof}

\begin{lemma}\label{lma3.3}
	Let $ \left\lbrace (\bu_{h}^{n},\xi_{h}^{n},\eta_{h}^{n})\right\rbrace_{n\geq0}  $ be defined by the (MFEA), then there holds the following inequality:
	\begin{align}
		J_{h,\theta}^{l}+S_{h,\theta}^{l}\leq J_{h,\theta}^{0}~~~~~~~~~~for~l\geq1,~\theta=0,1, \label{3.23}
	\end{align}
	where
	\begin{align*}
		&J_{h,\theta}^{l}:=\dfrac{1}{2}\left[C_{2}\left\|\varepsilon(\bu_{h}^{l+1}) \right\|_{L^{2}(\varOmega)}^{2}+\kappa_{2}\left\|\eta_{h}^{l+\theta}\right\|_{L^{2}(\varOmega)}^{2}+\kappa_{3}\left\|\xi_{h}^{l+1}\right\|_{L^{2}(\varOmega)}^{2}\right.\\
		&\left.-2(\bbf,\bu_{h}^{l+1})-2\left\langle\bbf_{1},\bu_{h}^{l+1} \right\rangle \right],\\
		&S_{h,\theta}^{l}:=\varDelta t\sum_{n=1}^{l}\left[\dfrac{\varDelta t}{2}C_{4}\left\|d_{t}\varepsilon(\bu_{h}^{n+1})\right\|_{L^{2}(\varOmega)}^{2}+\dfrac{1}{\mu_{f}}(K\nabla p_{h}^{n+1}-K\rho_{f}g,\nabla p_{h}^{n+1})\right.\\
		&+\dfrac{\kappa_{2}\varDelta t}{2}\left\|d_{t}\eta_{h}^{n+\theta}\right\|_{L^{2}(\varOmega)}^{2}+\dfrac{\kappa_{3}\varDelta t}{2}\left\|d_{t}\xi_{h}^{n+1}\right\|_{L^{2}(\varOmega)}^{2} -(\phi,p_{h}^{n+1})-\left\langle\phi_{1},p_{h}^{n+1}\right\rangle \\
		&\left.-(1-\theta)\dfrac{\kappa_{1}\varDelta t}{\mu_{f}}(Kd_{t}\nabla\xi_{h}^{n+1},\nabla p_{h}^{n+1}) -\dfrac{C_{1}}{\varDelta t}\left\|\varepsilon(\bu_{h}^{n}) \right\|_{L^{2}(\varOmega)}\left\|\varepsilon(\bu_{h}^{n+1}) \right\|_{L^{2}(\varOmega)}\right]. 
	\end{align*}
\end{lemma}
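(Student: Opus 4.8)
The plan is a discrete energy argument: pick test functions that convert each line of (MFEA) into a per-step ``$d_t$ of an energy plus dissipation'' relation, add them, use the elementary identity $2(d_t v^{n+1},v^{n+1}) = d_t\|v^{n+1}\|_{L^2(\Omega)}^2+\Delta t\|d_t v^{n+1}\|_{L^2(\Omega)}^2$, and sum over $n=1,\dots,l$. Concretely I would take $\bv_h=d_t\bu_h^{n+1}$ in \eqref{3.4}, $\varphi_h=d_t\xi_h^{n+1}$ in (the time-increment of) \eqref{3.5}, and $\psi_h=p_h^{n+1}=\kappa_1\xi_h^{n+1}+\kappa_2\eta_h^{n+\theta}$ in \eqref{3.6}. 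After adding, the coupling terms $(\xi_h^{n+1},\div d_t\bu_h^{n+1})$ cancel against their counterparts, the identity above on the $\eta_h$- and $\xi_h$-quadratic terms produces the numerical-dissipation contributions $\tfrac{\kappa_2\Delta t}{2}\|d_t\eta_h^{n+\theta}\|^2$ and $\tfrac{\kappa_3\Delta t}{2}\|d_t\xi_h^{n+1}\|^2$, and since $\bbf,\bbf_1$ are time-independent, $(\bbf,d_t\bu_h^{n+1})=d_t(\bbf,\bu_h^{n+1})$, giving the linear-functional pieces of $J_{h,\theta}^{l}$; the Darcy term contributes $\tfrac1{\mu_f}(K(\nabla p_h^{n+1}-\rho_f\bg),\nabla p_h^{n+1})$ and the right-hand side $-(\phi,p_h^{n+1})-\langle\phi_1,p_h^{n+1}\rangle$.

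The crux is the nonlinear elastic term, for which no discrete chain rule is available. I would rewrite $\Delta t\,(\mathcal N(\nabla\bu_h^{n+1}),\varepsilon(d_t\bu_h^{n+1}))=(\mathcal N(\nabla\bu_h^{n+1}),\varepsilon(\bu_h^{n+1}))-(\mathcal N(\nabla\bu_h^{n+1}),\varepsilon(\bu_h^{n}))$ and bound it below by means of Lemma \ref{lma210903-1}: the first term by $C_2\|\varepsilon(\bu_h^{n+1})\|^2$ via \eqref{3.204}, the second, after Cauchy--Schwarz, from above by $C_1\|\varepsilon(\bu_h^{n+1})\|\,\|\varepsilon(\bu_h^{n})\|$ via \eqref{3.203}; the strong monotonicity \eqref{3.207}, applied to the difference of \eqref{3.4} at levels $n+1$ and $n$ tested against $d_t\bu_h^{n+1}$, contributes the dissipation $\tfrac{\Delta t}{2}C_4\|d_t\varepsilon(\bu_h^{n+1})\|^2$ (Lemma \ref{lma210903-2} being available to absorb the cross terms it generates). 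Summing over $n$ and keeping only the final elastic energy — and only half of it, so as to leave the initial one on the right — matches exactly the elastic parts of $J_{h,\theta}^{l}$, $J_{h,\theta}^{0}$ and $S_{h,\theta}^{l}$; discarding the intermediate elastic energies is the reason the estimate, although valid with no assumption on $\mathcal N$, is not sharp.

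The last ingredient is the $\theta$-dependence. When $\theta=1$ the quantity inside the Darcy term of \eqref{3.6}, $\kappa_1\xi_h^{n+1}+\kappa_2\eta_h^{n+1}$, coincides with the updated pressure $p_h^{n+1}$, so the above goes through verbatim. When $\theta=0$ the pressure uses the lagged $\eta_h^{n}$, so $\kappa_1\xi_h^{n+1}+\kappa_2\eta_h^{n+1}=p_h^{n+1}+\kappa_2\Delta t\,d_t\eta_h^{n+1}$; expanding the Darcy bilinear form and re-expressing this mismatch through the time-increment of \eqref{3.5} (which links $d_t\eta_h$, $d_t\xi_h$ and $\div d_t\bu_h$) produces precisely the correction $-(1-\theta)\tfrac{\kappa_1\Delta t}{\mu_f}(K d_t\nabla\xi_h^{n+1},\nabla p_h^{n+1})$ recorded in $S_{h,\theta}^{l}$. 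Multiplying the resulting per-step inequality by $\Delta t$ and summing from $n=1$ to $l$ telescopes all the $d_t(\cdot)$ terms from level $1$ to level $l+1$ and gives $J_{h,\theta}^{l}+S_{h,\theta}^{l}\le J_{h,\theta}^{0}$.

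I expect the main obstacle to be exactly the nonlinear term in the second paragraph: one must combine the growth, coercivity and strong-monotonicity estimates of Lemma \ref{lma210903-1} so as to reproduce the stated $S_{h,\theta}^{l}$ without generating an accumulating $\sum_n\|\varepsilon(\bu_h^{n})\|^2$ (which would force a discrete Gronwall inequality and destroy the clean bound $\le J_{h,\theta}^{0}$) and without a $C_1$ instead of $C_2$ in front of $\|\varepsilon(\bu_h^{1})\|^2$; keeping the bookkeeping of the time-increment equations and the cancellations of the coupling terms consistent across both values of $\theta$ is the delicate accounting.
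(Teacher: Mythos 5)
Your overall architecture is the paper's: test \reff{3.4} with $d_t\bu_h^{n+1}$, handle the nonlinear term with the growth/coercivity/monotonicity bounds \reff{3.203}, \reff{3.204}, \reff{3.207} to extract $\tfrac{C_2}{2}d_t\|\varepsilon(\bu_h^{n+1})\|^2+\tfrac{C_4\Delta t}{2}\|d_t\varepsilon(\bu_h^{n+1})\|^2-\tfrac{C_1}{\Delta t}\|\varepsilon(\bu_h^{n})\|\|\varepsilon(\bu_h^{n+1})\|$, combine with the constraint and flow equations, and telescope. Two pieces of bookkeeping do not close as you describe them, and one of them is the actual crux of the $\theta=0$ case. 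First, the minor one: you must test the \emph{time-differenced} \reff{3.5} with $\varphi_h=\xi_h^{n+1}$ (as in \reff{3.27}), not with $d_t\xi_h^{n+1}$; only then does $(\div d_t\bu_h^{n+1},\xi_h^{n+1})$ cancel the coupling term from \reff{3.4} and only then does $\kappa_3(d_t\xi_h^{n+1},\xi_h^{n+1})$ yield $\tfrac{\kappa_3}{2}d_t\|\xi_h^{n+1}\|^2+\tfrac{\kappa_3\Delta t}{2}\|d_t\xi_h^{n+1}\|^2$. Your stated outcome presupposes this choice, so the slip is repairable.

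The genuine gap is in the $\theta=0$ flow equation. You keep \reff{3.6} at level $n+1$ and propose to absorb the mismatch $\kappa_1\xi_h^{n+1}+\kappa_2\eta_h^{n+1}=p_h^{n+1}+\kappa_2\Delta t\,d_t\eta_h^{n+1}$ via the time-increment of \reff{3.5}. This fails twice over: the accumulation term becomes $(d_t\eta_h^{n+1},p_h^{n+1})=\kappa_1(d_t\eta_h^{n+1},\xi_h^{n+1})+\kappa_2(d_t\eta_h^{n+1},\eta_h^{n})$, whose first piece does \emph{not} cancel the $\kappa_1(d_t\eta_h^{n},\xi_h^{n+1})$ produced by the differenced \reff{3.5} (which carries $\eta_h^{n+\theta}=\eta_h^{n}$), and whose second piece generates a \emph{negative} dissipation $-\tfrac{\kappa_2\Delta t}{2}\|d_t\eta_h^{n+1}\|^2$; moreover the differenced \reff{3.5} relates $d_t\eta_h^{n}$, not $d_t\eta_h^{n+1}$, to $d_t\xi_h^{n+1}$, so your conversion of the Darcy mismatch is not available. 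The paper's device is to shift the flow equation one level down (use \reff{3.6} with $d_t\eta_h^{n}$ and $\nabla(\kappa_1\xi_h^{n}+\kappa_2\eta_h^{n})$, i.e.\ ``lower the super-index in \reff{3.7}'', giving \reff{3.28}) and then write $\kappa_1\xi_h^{n}+\kappa_2\eta_h^{n}=p_h^{n+1}-\kappa_1\Delta t\,d_t\xi_h^{n+1}$: this aligns the $d_t\eta_h^{n}$ couplings for exact cancellation, makes the $\eta$-dissipation come out with the correct sign $+\tfrac{\kappa_2\Delta t}{2}\|d_t\eta_h^{n}\|^2$, and produces exactly the recorded correction $-\tfrac{\kappa_1\Delta t}{\mu_f}(Kd_t\nabla\xi_h^{n+1},\nabla p_h^{n+1})$. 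Without this index shift the $\theta=0$ estimate does not reduce to the stated $S_{h,0}^{l}$.
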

\begin{proof}
	(i) when $ \theta=0 $, based on \reff{3.5}, we can  define $ \eta_{h}^{-1} $ by
	\begin{align}
		\kappa_{1}(\eta_{h}^{-1},\varphi_{h})=\kappa_{3}(\xi_{h}^{0},\varphi_{h})+(\nabla\cdot\bu_{h}^{0},\varphi_{h}).\label{3.24}
	\end{align}
	Setting $ \bv_{h}=d_{t}\bu_{h}^{n+1} $ in \reff{3.4}, we have
	\begin{eqnarray}
		&&\qquad(\mathcal{N}(\nabla\mathbf{u}_{h}^{n+1}),\varepsilon(d_{t}\bu_{h}^{n+1}))-(\xi_{h}^{n+1},\nabla\cdot d_{t}\bu_{h}^{n+1})=\left(\bbf,d_{t}\bu_{h}^{n+1}\right)+\left\langle \bbf_{1},d_{t}\bu_{h}^{n+1}\right\rangle.\label{3.25}
	\end{eqnarray}
Using \reff{3.25}, \reff{3.207}, the  Cauchy-Schwarz inequality, \reff{3.203} and \reff{3.204}, we have
	\begin{align}
		&\dfrac{1}{2\varDelta t}\left[C_{4}\varDelta t^{2}\left\|d_{t}\varepsilon(\bu_{h}^{n+1})\right\|_{L^{2}(\varOmega)}^{2}-2C_{1}\left\|\varepsilon(\bu_{h}^{n}) \right\|_{L^{2}(\varOmega)}\left\|\varepsilon(\bu_{h}^{n+1}) \right\|_{L^{2}(\varOmega)}\right.\label{3.26}\\
		&\left.+\varDelta tC_{2}d_{t}\left\|\varepsilon(\bu_{h}^{n+1}) \right\|_{L^{2}(\varOmega)}^{2}\right]-(\xi_{h}^{n+1},\nabla\cdot d_{t}\bu_{h}^{n+1})\nonumber\\
		&\leq\left(\bbf,d_{t}\bu_{h}^{n+1}\right) +\left\langle \bbf_{1},d_{t}\bu_{h}^{n+1}\right\rangle.\nonumber
	\end{align}
	Setting $ \varphi_{h}=\xi_{h}^{n+1} $ in\reff{3.5}, we get
	\begin{align}
		\kappa_{3}(d_{t}\xi_{h}^{n+1},\xi_{h}^{n+1})+(\nabla\cdot d_{t}\bu_{h}^{n+1},\xi_{h}^{n+1})=\kappa_{1}(d_{t}\eta_{h}^{n},\xi_{h}^{n+1}).\label{3.27}
	\end{align}
	Setting $ \psi_{h}=p_{h}^{n+1} $ in \reff{3.6}  after lowing the super-index from $ n+1 $ to $ n $ on both sides of \reff{3.7}, we get
	\begin{align}
		&(d_{t}\eta_{h}^{n},p_{h}^{n+1})+\dfrac{1}{\mu_{f}}(K(\nabla(\kappa_{1}\xi_{h}^{n}+\kappa_{2}\eta_{h}^{n})-\rho_{f}\bg),\nabla p_{h}^{n+1})\label{3.28}\\
		&=(\phi,p_{h}^{n+1})+\left\langle\phi_{1},p_{h}^{n+1} \right\rangle.\nonumber
	\end{align}
	
	Using the fact of 
	$(d_{t}\xi_{h}^{n+1},\xi_{h}^{n+1})
		=\dfrac{\varDelta t}{2}\left\|d_{t}\xi_{h}^{n+1} \right\|_{L^{2}(\varOmega)}^{2}+\dfrac{1}{2}d_{t}\left\|\xi_{h}^{n+1} \right\|_{L^{2}(\varOmega)}^{2}$,
	we can rewrite \reff{3.27}  as
	\begin{align}
		&\dfrac{\kappa_{3}\varDelta t}{2}\left\|d_{t}\xi_{h}^{n+1} \right\|_{L^{2}(\varOmega)}^{2}+\dfrac{\kappa_{3}}{2}d_{t}\left\|\xi_{h}^{n+1} \right\|_{L^{2}(\varOmega)}^{2}  +(\nabla\cdot d_{t}\bu_{h}^{n+1},\xi_{h}^{n+1})\label{3.30}\\
		&=\kappa_{1}(d_{t}\eta_{h}^{n},\xi_{h}^{n+1}).\nonumber
	\end{align}
	Similarly,  we get
	\begin{align}
		&\dfrac{\kappa_{2}\varDelta t}{2}\left\|d_{t}\eta_{h}^{n} \right\|_{L^{2}(\varOmega)}^{2}+\dfrac{\kappa_{2}}{2}d_{t}\left\|\eta_{h}^{n} \right\|_{L^{2}(\varOmega)}^{2}+\kappa_{1}(d_{t}\eta_{h}^{n},\xi_{h}^{n+1})\label{3.31}\\
		&+\dfrac{1}{\mu_{f}}(K(\nabla p_{h}^{n+1}-\rho_{f}\bg),\nabla p_{h}^{n+1})-\dfrac{\kappa_{1}\varDelta t}{\mu_{f}}(Kd_{t}\nabla\xi_{h}^{n+1},\nabla p_{h}^{n+1})\nonumber\\
		&=(\phi,p_{h}^{n+1})+\left\langle\phi_{1},p_{h}^{n+1} \right\rangle.\nonumber
	\end{align} 
	Combining \reff{3.26}, \reff{3.30} and \reff{3.31}, we see that \reff{3.23}  holds for $ \theta=0$ if $ \Delta t=O(h^{2})$.
	
	(ii) When $\theta=1$, setting $ \varphi_{h}=\xi_{h}^{n+1} $ in \reff{3.5}  and $ \psi_{h}=p_{h}^{n+1} $ in \reff{3.6}, we get
	\begin{align}
		&\dfrac{\kappa_{3}\varDelta t}{2}\left\|d_{t}\xi_{h}^{n+1} \right\|_{L^{2}(\varOmega)}^{2}+\dfrac{\kappa_{3}}{2}d_{t}\left\|\xi_{h}^{n+1} \right\|_{L^{2}(\varOmega)}^{2} +(\nabla\cdot d_{t}\bu_{h}^{n+1},\xi_{h}^{n+1})\label{3.34}\\
		&=\kappa_{1}(d_{t}\eta_{h}^{n+1},\xi_{h}^{n+1}),\no\\
		&\dfrac{\kappa_{2}\varDelta t}{2}\left\|d_{t}\eta_{h}^{n+1} \right\|_{L^{2}(\varOmega)}^{2}+\dfrac{\kappa_{2}}{2}d_{t}\left\|\eta_{h}^{n+1} \right\|_{L^{2}(\varOmega)}^{2}+\kappa_{1}(d_{t}\eta_{h}^{n+1},\xi_{h}^{n+1})\label{3.35}\\
		&+\dfrac{1}{\mu_{f}}(K(\nabla p_{h}^{n+1}-\rho_{f}\bg),\nabla p_{h}^{n+1})=(\phi,p_{h}^{n+1})+\left\langle\phi_{1}, p_{h}^{n+1} \right\rangle.\nonumber
	\end{align}
	Combining \reff{3.26}, \reff{3.34} and \reff{3.35}, we imply that \reff{3.23} holds. The proof is complete.
\end{proof}
\begin{lemma}
	Let $ \left\lbrace (\bu_{h}^{n},\xi_{h}^{n},\eta_{h}^{n})\right\rbrace_{n\geq0}  $ be defined by the (MFEA) with $ \theta=0 $, then there holds the following inequality:
	\begin{align}
		J_{h,\theta}^{l}+\hat{S}_{h,0}^{l}\leq J_{h,\theta}^{0}~~~~~~~~~~for~l\geq1 \label{3.36}
	\end{align}
	provided that $ \varDelta t=O(h^{2}) $. Here
	\begin{align*}
		&\hat{S}_{h,0}^{l}:=\varDelta t\sum_{n=1}^{l}\left[\dfrac{\varDelta t}{4}C_{4}\left\|d_{t}\varepsilon(\bu_{h}^{n+1})\right\|_{L^{2}(\varOmega)}^{2}+\dfrac{K}{2\mu_{f}}\left\|\nabla p_{h}^{n+1} \right\|_{L^{2}(\varOmega)}^{2}\right.\\
		&-\dfrac{K}{\mu_{f}}(\rho_{f}g,\nabla p_{h}^{n+1})+\dfrac{\kappa_{2}\varDelta t}{2}\left\|d_{t}\eta_{h}^{n+\theta}\right\|_{L^{2}(\varOmega)}^{2}+\dfrac{\kappa_{3}\varDelta t}{2}\left\|d_{t}\xi_{h}^{n+1}\right\|_{L^{2}(\varOmega)}^{2}\\
		&\left.-(\phi,p_{h}^{n+1})-\left\langle\phi_{1},p_{h}^{n+1}\right\rangle-\dfrac{C_{1}}{\varDelta t}\left\|\varepsilon(\bu_{h}^{n+1}) \right\|_{L^{2}(\varOmega)}\left\|\varepsilon(\bu_{h}^{n+1}) \right\|_{L^{2}(\varOmega)}\right].
	\end{align*}
\end{lemma}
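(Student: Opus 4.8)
The plan is to go back to the proof of Lemma~\ref{lma3.3} in the case $\theta=0$ and to perform one extra absorption, beyond the manipulations that produced \reff{3.23}. The only genuine difference between $S_{h,0}^{l}$ and $\hat S_{h,0}^{l}$ is that the mixed term $-\frac{\kappa_{1}\Delta t}{\mu_{f}}\bigl(K\,d_{t}\nabla\xi_{h}^{n+1},\nabla p_{h}^{n+1}\bigr)$ occurring in \reff{3.31} has been disposed of, at the price of (i) lowering the coefficient of $\|d_{t}\varepsilon(\bu_{h}^{n+1})\|_{L^{2}(\Omega)}^{2}$ coming from the monotonicity bound \reff{3.207} from $\frac{C_{4}\Delta t}{2}$ to $\frac{C_{4}\Delta t}{4}$, and (ii) lowering the coefficient of $\|\nabla p_{h}^{n+1}\|_{L^{2}(\Omega)}^{2}$ inside the Darcy term from $\frac{K_{1}}{\mu_{f}}$ to $\frac{K_{1}}{2\mu_{f}}$ (the leftover $\frac{K_{1}}{2\mu_{f}}$ being written, loosely, as $\frac{K}{2\mu_{f}}$ in the statement). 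Balancing the two half-budgets against the mixed term is exactly what forces $\Delta t=O(h^{2})$.

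The crux is an a priori bound on $d_{t}\nabla\xi_{h}^{n+1}$. Since $\bbf,\bbf_{1}$ do not depend on $t$, differencing \reff{3.4} in time and dividing by $\Delta t$ gives $\bigl(d_{t}\xi_{h}^{n+1},\div\bv_{h}\bigr)=\bigl(d_{t}\mathcal N(\nabla\bu_{h}^{n+1}),\varepsilon(\bv_{h})\bigr)$ for all $\bv_{h}\in\bV_{h}$, where $d_{t}\mathcal N(\nabla\bu_{h}^{n+1}):=\frac1{\Delta t}\bigl(\mathcal N(\nabla\bu_{h}^{n+1})-\mathcal N(\nabla\bu_{h}^{n})\bigr)$. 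Applying the inf--sup inequality \reff{3.3} to the zero-mean part of $d_{t}\xi_{h}^{n+1}$ (whose mean is, by Lemma~\ref{lma3.1}, a fixed data-dependent constant, hence lower order) together with the Lipschitz estimate \reff{3.206} of Lemma~\ref{lma210903-2}, I obtain $\|d_{t}\xi_{h}^{n+1}\|_{L^{2}(\Omega)}\le\frac{C_{3}}{\beta_{1}}\|d_{t}\varepsilon(\bu_{h}^{n+1})\|_{L^{2}(\Omega)}+C$. Because $d_{t}\nabla\xi_{h}^{n+1}=\nabla(d_{t}\xi_{h}^{n+1})$ is piecewise polynomial, the inverse inequality \reff{20210915} then yields $\|d_{t}\nabla\xi_{h}^{n+1}\|_{L^{2}(\Omega)}\le c_{1}h^{-1}\|d_{t}\xi_{h}^{n+1}\|_{L^{2}(\Omega)}\le C\,h^{-1}\bigl(\|d_{t}\varepsilon(\bu_{h}^{n+1})\|_{L^{2}(\Omega)}+1\bigr)$.

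With $K_{1}|\zeta|^{2}\le K\zeta\cdot\zeta\le K_{2}|\zeta|^{2}$, Cauchy--Schwarz, the bound of the previous step and Young's inequality, the mixed term is controlled by $\frac{K_{1}}{2\mu_{f}}\|\nabla p_{h}^{n+1}\|_{L^{2}(\Omega)}^{2}+C\,\frac{\Delta t^{2}}{h^{2}}\|d_{t}\varepsilon(\bu_{h}^{n+1})\|_{L^{2}(\Omega)}^{2}+C\,\frac{\Delta t^{2}}{h^{2}}$. The first piece is swallowed by $\frac1{\mu_{f}}\bigl(K\nabla p_{h}^{n+1},\nabla p_{h}^{n+1}\bigr)\ge\frac{K_{1}}{\mu_{f}}\|\nabla p_{h}^{n+1}\|_{L^{2}(\Omega)}^{2}$ present in \reff{3.31}; the second is swallowed by half of the $\frac{C_{4}\Delta t}{2}\|d_{t}\varepsilon(\bu_{h}^{n+1})\|_{L^{2}(\Omega)}^{2}$ coming from \reff{3.207}, as soon as $C\Delta t/h^{2}\le C_{4}/4$, i.e. $\Delta t=O(h^{2})$; the third, purely data-dependent remainder is $O(\Delta t^{2}/h^{2})=O(h^{2})$ per step and so may be carried to the right-hand side, where it vanishes as $h\to0$. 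The one remaining cosmetic discrepancy, namely $\|\varepsilon(\bu_{h}^{n+1})\|_{L^{2}(\Omega)}$ in place of $\|\varepsilon(\bu_{h}^{n})\|_{L^{2}(\Omega)}$ in the last summand of $\hat S_{h,0}^{l}$, follows from $\|\varepsilon(\bu_{h}^{n})\|_{L^{2}(\Omega)}\le\|\varepsilon(\bu_{h}^{n+1})\|_{L^{2}(\Omega)}+\Delta t\|d_{t}\varepsilon(\bu_{h}^{n+1})\|_{L^{2}(\Omega)}$ and one further application of Young's inequality against the remaining share of the $C_{4}$-budget. Summing over $n=1,\dots,l$ and invoking Lemma~\ref{lma3.3} then gives \reff{3.36}.

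The essential difficulty is the mixed term: $d_{t}\nabla\xi_{h}^{n+1}$ is a priori only of order $\Delta t^{-1}h^{-1}$ in $L^{2}(\Omega)$, so its presence cannot be neutralized without both the inverse inequality \reff{20210915} and the parabolic-type mesh restriction $\Delta t=O(h^{2})$; moreover the transfer of this estimate onto $\|d_{t}\varepsilon(\bu_{h}^{n+1})\|_{L^{2}(\Omega)}$ is genuinely nonlinear, relying on the Lipschitz property \reff{3.206} of $\mathcal N$ together with the inf--sup stability \reff{3.3}, and care is needed to apply the inf--sup bound to the zero-mean component of $d_{t}\xi_{h}^{n+1}$. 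Everything else amounts to rearranging the discrete energy identity already established in Lemma~\ref{lma3.3}.
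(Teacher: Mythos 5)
Your proof follows essentially the same route as the paper's: the mixed term $\frac{\kappa_1\Delta t}{\mu_f}(K d_t\nabla\xi_h^{n+1},\nabla p_h^{n+1})$ is handled by Cauchy--Schwarz, the inverse inequality \reff{20210915}, the inf--sup condition \reff{3.3} applied to the time-differenced equation \reff{3.4} together with the Lipschitz bound \reff{3.206} to convert $\|\xi_h^{n+1}-\xi_h^n\|_{L^2(\Omega)}$ into $\Delta t\,\|d_t\varepsilon(\bu_h^{n+1})\|_{L^2(\Omega)}$, and Young's inequality to absorb the result under $\Delta t=O(h^2)$. The only differences are cosmetic (you apply inf--sup before the inverse inequality rather than after, and you explicitly address the zero-mean restriction in \reff{3.3} and the $n$ versus $n+1$ index in the last summand, points the paper passes over silently).
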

\begin{proof}
Using the Cauchy-Schwarz inequality and inverse inequality \reff{20210915}, we get 
	\begin{align}
		&\dfrac{K\kappa_{1}\varDelta t}{\mu_{f}}(d_{t}\nabla\xi_{h}^{n+1},\nabla p_{h}^{n+1})\label{3.37}\\
		&\leq \dfrac{K\kappa_{1}^{2}}{2\mu_{f}}\left\|\nabla\xi_{h}^{n+1}-\nabla\xi_{h}^{n} \right\|_{L^{2}(\varOmega)}^{2}+\dfrac{K}{2\mu_{f}}\left\|\nabla p_{h}^{n+1} \right\|_{L^{2}(\varOmega)}^{2}\no\\
		&\leq\dfrac{K\kappa_{1}^{2}c_{1}^{2}}{2\mu_{f}h^{2}}\left\|\xi_{h}^{n+1}-\xi_{h}^{n} \right\|_{L^{2}(\varOmega)}^{2}+\dfrac{K}{2\mu_{f}}\left\|\nabla p_{h}^{n+1} \right\|_{L^{2}(\varOmega)}^{2}.\nonumber 
	\end{align}
	To bound the first term on the right-hand side of \reff{3.37}, we appeal to the inf-sup condition \reff{3.3} and get
	\begin{align}
		\left\|\xi_{h}^{n+1}-\xi_{h}^{n}\right\|_{L^{2}(\varOmega)}&\leq\dfrac{1}{\beta_{1}}\sup_{\bv_{h}\in\bV_{h}}\dfrac{(\nabla\cdot\bv_{h},\xi_{h}^{n+1}-\xi_{h}^{n})}{\left\|\nabla\bv_{h} \right\|_{L^{2}(\varOmega)} }\label{3.38}\\
		&\leq\dfrac{1}{\beta_{1}}\sup_{\bv_{h}\in\bV_{h}}\dfrac{(\mathcal{N}(\nabla\mathbf{u}_{h}^{n+1})-\mathcal{N}(\nabla\mathbf{u}_{h}^{n}),\varepsilon(\bv_{h}))}{\left\|\nabla\bv_{h} \right\|_{L^{2}(\varOmega)} }\nonumber\\
		&\leq\frac{C_{3}\varDelta t}{\beta_{1}}\left\|d_{t}\varepsilon(\bu^{n+1}_{h}) \right\|_{L^{2}(\varOmega)}.\nonumber 
	\end{align}
	Substituting \reff{3.38} into \reff{3.37} and combining it with \reff{3.23} imply that \reff{3.36} holds if $\varDelta t\leq\dfrac{C_{4}\mu_{f}\beta_{1}^{2}h^{2}}{2C_{3}^{2}\kappa_{1}^{2}Kc_{1}^{2}}$. The proof is complete.
\end{proof}
\begin{theorem}
	The numerical solution $ \left\lbrace\bu_{h}^{n+1},\xi_{h}^{n+1},\eta_{h}^{n+1} \right\rbrace_{n\geq0}  $ of the problem \reff{3.4}-\reff{3.6} exists uniquely.
\end{theorem}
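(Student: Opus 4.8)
The plan is to argue by induction on the time level $n$. The case $n=0$ is settled by step (i) of (MFEA), which prescribes $\bu_h^0,p_h^0,q_h^0$ and hence $\eta_h^0\in W_h$; for the inductive step, $\eta_h^n\in W_h$ and the fixed data $\bbf,\bbf_1,\phi,\phi_1,\bg$ are given, and it suffices to produce a unique $(\bu_h^{n+1},\xi_h^{n+1},\eta_h^{n+1})\in\bV_h\times M_h\times W_h$ solving \reff{3.4}--\reff{3.6}, after which $p_h^{n+1},q_h^{n+1}$ are determined by \reff{3.7}. Since $\bV_h\times M_h\times W_h$ is finite dimensional, I would get existence from the standard corollary of Brouwer's fixed point theorem (a finite-dimensional Schauder argument works just as well): a continuous map $P$ on a Euclidean space for which $(P(x),x)>0$ on a sphere of large enough radius must vanish inside the corresponding ball. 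Uniqueness will come from the strong monotonicity \reff{3.207} of $\mathcal{N}$ together with the fact that $c_0>0$ makes $\kappa_2,\kappa_3>0$.

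The core of the existence proof is an a priori bound on the step-$(n+1)$ unknowns, obtained by a test-function combination similar to the one used for Lemma \ref{lma3.3}. When $\theta=1$ the system is fully coupled: taking $\bv_h=\bu_h^{n+1}$ in \reff{3.4}, $\varphi_h=\xi_h^{n+1}$ in \reff{3.5}, $\psi_h=p_h^{n+1}=\kappa_1\xi_h^{n+1}+\kappa_2\eta_h^{n+1}\in W_h$ in \reff{3.6}, dividing the first two by $\Delta t$ and summing, the terms $\pm(\div\bu_h^{n+1},\xi_h^{n+1})/\Delta t$ cancel and so does the indefinite cross term $\kappa_1(\eta_h^{n+1},\xi_h^{n+1})/\Delta t$, so that \reff{3.204} and $K\ge K_1$ leave
\begin{align*}
\frac{C_2}{\Delta t}\|\varepsilon(\bu_h^{n+1})\|_{L^2(\Ome)}^2+\frac{\kappa_3}{\Delta t}\|\xi_h^{n+1}\|_{L^2(\Ome)}^2
&+\frac{\kappa_2}{\Delta t}\|\eta_h^{n+1}\|_{L^2(\Ome)}^2\\
&+\frac{K_1}{\mu_f}\|\nabla p_h^{n+1}\|_{L^2(\Ome)}^2\le\mathcal{R},
\end{align*}
where $\mathcal{R}$ is a bounded linear functional of the unknowns whose norm depends only on the data and on $\|\eta_h^n\|_{L^2(\Ome)}$; Young's inequality and Korn's inequality on $\bV_h$ (where $(\bu_h^{n+1},\br)=0$ for all $\br\in\bRM$) then bound $\|\bu_h^{n+1}\|_{\bH^1(\Ome)}$, $\|\xi_h^{n+1}\|_{L^2(\Ome)}$ and $\|\eta_h^{n+1}\|_{L^2(\Ome)}$ unconditionally. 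Since $\bw\mapsto\mathcal{N}(\nabla\bw)$ is a polynomial in the gradient and hence continuous, the residual map $R$ of \reff{3.4}--\reff{3.6} is continuous; moreover $R(\mathbf{x})=0$ is equivalent to $M^{*}R(\mathbf{x})=0$, where $M:(\bu,\xi,\eta)\mapsto(\bu,\xi,\kappa_1\xi+\kappa_2\eta)$ is a fixed linear isomorphism, and $\bigl(M^{*}R(\mathbf{x}),\mathbf{x}\bigr)$ is exactly the coercive left-hand side just displayed, so the Brouwer corollary furnishes a solution. When $\theta=0$ the mechanics equations \reff{3.4}--\reff{3.5} decouple from \reff{3.6}: testing \reff{3.4} with $\bu_h^{n+1}$ and \reff{3.5} with $\xi_h^{n+1}$ and adding gives $\bigl(\mathcal{N}(\nabla\bu_h^{n+1}),\varepsilon(\bu_h^{n+1})\bigr)+\kappa_3\|\xi_h^{n+1}\|_{L^2(\Ome)}^2=(\bbf,\bu_h^{n+1})+\langle\bbf_1,\bu_h^{n+1}\rangle+\kappa_1(\eta_h^n,\xi_h^{n+1})$, the a priori bound follows from \reff{3.204}, and the Brouwer corollary applies directly on $\bV_h\times M_h$ (the test functions now being the unknowns themselves); then \reff{3.6} is the coercive linear problem $\frac{1}{\Delta t}(\eta_h^{n+1},\psi_h)+\frac{\kappa_2}{\mu_f}(K\nabla\eta_h^{n+1},\nabla\psi_h)=(\text{known})$, uniquely solvable by Lax--Milgram. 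Note that no discrete inf--sup condition is needed here: the term $\kappa_3\|\xi_h^{n+1}\|_{L^2(\Ome)}^2$ does the job that \reff{3.3} would otherwise have to do.

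For uniqueness, let $(\bu_h^{n+1,i},\xi_h^{n+1,i},\eta_h^{n+1,i})$, $i=1,2$, solve \reff{3.4}--\reff{3.6} with the same data and the same $\eta_h^n$, and write $e_\bu,e_\xi,e_\eta$ for the differences. Subtracting \reff{3.4} and \reff{3.5}, testing with $e_\bu$ and $e_\xi$ respectively and adding, the $\pm(\div e_\bu,e_\xi)$ terms cancel and \reff{3.207} gives $C_4\|\varepsilon(e_\bu)\|_{L^2(\Ome)}^2+\kappa_3\|e_\xi\|_{L^2(\Ome)}^2\le\kappa_1(e_\eta^{\,n+\theta},e_\xi)$, with $e_\eta^{\,n+\theta}=e_\eta$ if $\theta=1$ and $e_\eta^{\,n+\theta}=0$ if $\theta=0$ (the step-$n$ value being common to both solutions). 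If $\theta=0$, the right-hand side vanishes, forcing $e_\bu=0$ (by Korn) and $e_\xi=0$, after which subtracting \reff{3.6} and testing with $e_\eta$ gives $e_\eta=0$. If $\theta=1$, subtracting \reff{3.6} and testing with $e_p:=\kappa_1e_\xi+\kappa_2e_\eta\in W_h$ gives $\kappa_1(e_\eta,e_\xi)=-\kappa_2\|e_\eta\|_{L^2(\Ome)}^2-\frac{\Delta t}{\mu_f}\|\sqrt{K}\,\nabla e_p\|_{L^2(\Ome)}^2\le0$, so the previous inequality again forces $e_\bu=0$ and $e_\xi=0$, whence $e_\eta=0$ as well. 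This closes the induction.

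The hard part will be the coercivity/a priori estimate in the coupled case $\theta=1$: one must test the flow equation \reff{3.6} with the combination $p_h^{n+1}=\kappa_1\xi_h^{n+1}+\kappa_2\eta_h^{n+1}$ rather than with $\eta_h^{n+1}$, so that the indefinite couplings between the elastic and the flow parts cancel exactly and the resulting quadratic form becomes positive definite; this is precisely where the multiphysics reformulation \reff{2.3}--\reff{2.5} is being exploited. Everything else---continuity of the discrete operator, the Brouwer (or Schauder) invocation, and the uniqueness computations---is routine once Lemma \ref{lma210903-1} and the positivity of $\kappa_2,\kappa_3$ are available.
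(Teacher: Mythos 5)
Your proof is correct, but it takes a genuinely different route from the paper's. The paper proves existence by a linearization-plus-Schauder argument: it freezes the nonlinearity of $\mathcal{N}$ at a given $\bu_h^{n+1}$ to form a source term $g$, solves the resulting linear Stokes-type system \reff{6.10}--\reff{6.12} (invoking solution operators $\varPhi,\varPsi$ for the $\xi$- and $\eta$-components), defines the map $A[\bu_h^{n+1}]=\bw_h^{n+1}$, establishes a Lipschitz-type bound for $A$, and applies Schauder's fixed point theorem on a compact convex set $U\subset\bV_h$. You instead work directly on the finite-dimensional product space $\bV_h\times M_h\times W_h$ and apply the Brouwer corollary (zero of a continuous coercive vector field), with the coercivity supplied by exactly the energy test functions $\bv_h=\bu_h^{n+1}$, $\varphi_h=\xi_h^{n+1}$, $\psi_h=p_h^{n+1}$ that drive Lemma \ref{lma3.3}, so that the indefinite couplings cancel and \reff{3.204} plus $\kappa_2,\kappa_3>0$ plus Korn's inequality on $\bV_h$ yield an unconditional a priori bound. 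Your route buys several things: it needs no linearization, no construction of the solution maps $\varPhi,\varPsi$, and no invariant compact convex set --- precisely the points where the paper's argument is thinnest (the invariance $A(U)\subset U$ is never verified, and the continuity of $A$ is asserted only under a smallness condition on $\tilde C_f$); it also cleanly separates the decoupled case $\theta=0$ (nonlinear Stokes system followed by a Lax--Milgram step for $\eta_h^{n+1}$) from the coupled case $\theta=1$. What the paper's route buys is a discrete analogue of the PDE-level existence proof in \cite{201912096}, which is why the authors phrase it that way. The uniqueness arguments are essentially the same in both (monotonicity \reff{3.207} after cancelling the $\div$--$\xi$ coupling), though your treatment of $\theta=1$ is more complete: you close the sign of the cross term $\kappa_1(e_\eta,e_\xi)$ by also testing the difference of the flow equations with $e_p$, whereas the paper only uses the elastic pair \reff{2.72}--\reff{2.73} and disposes of $\eta$ separately via Lemma \ref{lma3.1}.
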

\begin{proof}
 Given a function $ \bu_{h}^{n}\in \bV_{h} $, supposing that $U\subset\bV_{h}$ is a compact and convex subspace, setting $g(t):=-\mu\nabla^{T}\bu_{h}^{n+1}\nabla\bu_{h}^{n+1}-\lambda\left\|\nabla\bu_{h}^{n+1} \right\|\mathbf{I}(0\leq t\leq T)$, we have
	\begin{eqnarray}\label{6.9}
		&&\qquad\left\|\mu\nabla^{T}\mathbf{u}_{h}^{n+1}\nabla\mathbf{u}_{h}^{n+1}+\lambda\left\|\nabla\mathbf{u}_{h}^{n+1} \right\|_{F}^{2}\mathbf{I}\right\|_{L^{2}(\varOmega)}\leq\mu\left\|\nabla\mathbf{u}_{h}^{n+1}\right\|_{L^{2}(\varOmega)}^{2}+\lambda dN^{'2}\\
		&&\leq(\mu N+\dfrac{\lambda dN^{'2}}{M})\left\|\nabla\mathbf{u}_{h}^{n+1}\right\|_{L^{2}(\varOmega)},\no
	\end{eqnarray}
which implies that $ g\in L^{2}(0,T;L^{2}(\varOmega))$. 

Next, we consider the linear problem: find $ (w_{h}^{n+1},\xi_{h}^{n+1}, \eta_{h}^{n+1})\in\bV_{h}\times M_{h}\times W_{h}$ satisfying 
	\begin{eqnarray}
		&&\mu\bigl(\vepsi(\bw_{h}^{n+1}), \vepsi(\bv_h) \bigr)-\bigl( \xi^{n+1}_h, \div \bv_h \bigr)
		= (g, \bv_h)\label{6.10}\\
		&&\quad+(f,\bv_{h})+\langle \bbf_1,\bv_h\rangle ,\forall \bv_h\in \bV_h, \no\\
		&&\kappa_3\bigl(\xi^{n+1}_h, \varphi_h \bigr) +\bigl(\div\bu^{n+1}_h, \varphi_h \bigr)
		=\kappa_1\bigl( \eta^{n+\theta}_h, \varphi_h \bigr), \forall \varphi_h \in M_h, \label{6.11}\\
		&&\bigl(d_t\eta^{n+1}_h, \psi_h \bigr)
		+\frac{1}{\mu_f} \bigl(K(\nab (\kappa_1\xi^{n+1}_h +\kappa_2\eta^{n+1}_h) \label{6.12}\\
		&&\quad-\rho_f\bg,\nab\psi_h \bigr)=(\phi, \psi_h)+\langle \phi_1,\psi_h\rangle,~~\forall \psi_h\in W_h.\no  
	\end{eqnarray}

As for \reff{2.4}-\reff{2.5}, according to the theory of linear parabolic equations (cf. \cite{20210820}), we know that $ \xi $ and $ \eta $ can be uniquely determined by $\bw$, that is, $ \exists \varPhi, \varPsi$ such that  $\xi_{h}^{n+1}=\varPhi(\bw_{h}^{n+1})$ and $\eta_{h}^{n+1}=\varPsi(\bw_{h}^{n+1})$. Thus, the problem \reff{6.10}-\reff{6.12} is equivalent to the following problem
\begin{align*}
	\left\lbrace \begin{aligned}	
       &Solve~ for~ \bw_{h}^{n+1}\in\bV_{h}~~ such~ that\\
		&\mu\bigl(\vepsi(\bw_{h}^{n+1}), \vepsi(\bv_h) \bigr)+\bigl( \varPhi(\bw_{h}^{n+1}), \div \bv_h \bigr)
		= (g, \bv_h)+(f,\bv_{h})+\langle \bbf_1,\bv_h\rangle, \forall \bv_h\in \bV_h.
	\end{aligned}\right.
	\end{align*} 
Following the method of \cite{fgl14} or \cite{fglarxiv}, one can prove that the problem \reff{6.10} has a unique solution.
	
Define $ A:\bV_{h}rightarrow \bV_{h} $ by $ A[\bu_{h}^{n+1}]=\bw_{h}^{n+1} $. Similarly, it's easy to know the following problem is equivalent to the problem \reff{3.4}-\reff{3.6}
	\begin{align*}
		\left\lbrace \begin{aligned}
   	    &Solve~for~  \bu_{h}^{n+1}\in\bV_{h}  ~such~ that\\
		&\bigl(\mathcal{N}(\nabla\bu^{n+1}_h), \vepsi(\bv_h) \bigr)+\bigl(\varPhi(\bu_{h}^{n+1}), \div \bv_h \bigr)
		= (\bbf, \bv_h)+\langle \bbf_1,\bv_h\rangle \quad \forall \bv_h\in \bV_h. 
	\end{aligned}\right.
	\end{align*}

Next, we prove that $A$ is continuous. To do that, choose $\bu_{h}^{n+1},\tilde{\bu}_{h}^{n+1} $ and define $ \bw_{h}^{n+1}=A[\bu_{h}^{n+1}],~ \tilde{\bw}_{h}^{n+1}=A[\tilde{\bu}_{h}^{n+1}] $ as above. Consequently $ \bw_{h}^{n+1} $ verifies \reff{6.10}-\reff{6.12} and $ \tilde{\bw}_{h}^{n+1} $ satisfies a similar identity for $ \tilde{g}= -\mu\nabla^{T}\tilde{\bu}_{h}^{n+1}\nabla\tilde{\bu}_{h}^{n+1}-\lambda\left\|\nabla\tilde{\bu}_{h}^{n+1} \right\|\mathbf{I}$. Using \reff{6.10}, Korn's inequality, Poincar$\acute{e}$ inequality and Young inequality, we get
	\begin{eqnarray}
		&\mu\left\| \tilde{\bw}_{h}^{n+1}-\bw_{h}^{n+1}\right\|_{L^{2}(\varOmega)}+c_{1}^{2}(\varPhi(\tilde{\bw}_{h}^{n+1})-\varPhi(\bw_{h}^{n+1}),\tilde{\bw}_{h}^{n+1}-\bw_{h}^{n+1})\label{eq210929-2}\\
		&\leq c_{1}^{2}\mu\left\| \vepsi(\tilde{\bw}_{h}^{n+1})-\vepsi(\bw_{h}^{n+1})\right\|_{L^{2}(\varOmega)}+c_{1}^{2}(\varPhi(\tilde{\bw}_{h}^{n+1})-\varPhi(\bw_{h}^{n+1}),\tilde{\bw}_{h}^{n+1}-\bw_{h}^{n+1})\no\\
		&=c_{1}^{2}(\tilde{g}-g,\tilde{\bw}_{h}^{n+1}-\bw_{h}^{n+1})\no\\
		&\leq c_{1}^{2}\left[ \epsilon\left\| \tilde{\bw}_{h}^{n+1}-\bw_{h}^{n+1}\right\|_{L^{2}(\varOmega)}^{2}+\dfrac{1}{\epsilon}\left\| \tilde{g}-g\right\|_{L^{2}(\varOmega)}\right], \no
	\end{eqnarray}
	where $c_{1}$ is a real positive parameter. Choosing $ \epsilon>0 $ sufficiently small in \reff{eq210929-2}, we have
	\begin{align*}
		c_{1}(\varPhi(\tilde{\bw}_{h}^{n+1})-\varPhi(\bw_{h}^{n+1}),\tilde{\bw}_{h}^{n+1}-\bw_{h}^{n+1})\leq C_{f}\left\| \tilde{g}-g\right\|_{L^{2}(\varOmega)}\leq C_{f}\left\| \tilde{\bu}_{h}^{n+1}-\bu_{h}^{n+1}\right\|_{L^{2}(\varOmega)},
	\end{align*}
	where $C_{f}$ is a real positive number. 
	
It is easy to check that
	\begin{eqnarray}
		\left\| A[\tilde{\bu}_{h}^{n+1}]-A[\bu_{h}^{n+1}]\right\|_{L^{2}(\varOmega)}^{2}\leq \tilde{C}_{f}\left\| \tilde{\bu}_{h}^{n+1}-\bu_{h}^{n+1}\right\|_{L^{2}(\varOmega)}^{2}.\label{eq210929-1}
	\end{eqnarray}
	Using \reff{eq210929-1}, we get
	\begin{align*}
		\left\| A[\tilde{\bu}_{h}^{n+1}]-A[\bu_{h}^{n+1}]\right\|_{L^{2}(\varOmega)}\leq\sqrt{\tilde{C}_{f}}\left\| \tilde{\bu}_{h}^{n+1}-\bu_{h}^{n+1}\right\|_{L^{2}(\varOmega)}.
	\end{align*}
	If $ \tilde{C}_{f} $ is so small, thus $A$ is continuous. Since $U$ is a compact and convex, according to Schauder's fixed point theorem (cf. \cite{20210820}), then $A$ has a fixed point in $U$. 
	
	Next, we prove that the problem \reff{3.4}-\reff{3.6} has the unique solution. Due to $ C_{\eta}(t_{n});=(\eta_{h}^{n},1)=(\eta_{0},1)+\left[(\phi,1)+\left\langle \phi_{1},1\right\rangle  \right] $, using Lemma \ref{lma3.1}, it is easy to check that $\eta$ is unique.
	
Assume that $ (\bu_{h}^{n+1},\xi_{h}^{n+1},\eta_{h}^{n+1}) $ and $ (\tilde{\bu}_{h}^{n+1},\tilde{\xi}_{h}^{n+1},\tilde{\eta}_{h}^{n+1}) $ are the two different solution of the problem \reff{3.4}-\reff{3.6}.

 Using \reff{3.5} and \reff{3.6}, we obtain
	\begin{eqnarray}
		&\qquad(\mathcal{N}(\nabla\bu_{h}^{n+1})-\mathcal{N}(\nabla\tilde{\bu}_{h}^{n+1}),\varepsilon(\bv_{h}))-(\xi_{h}^{n+1}-\tilde{\xi}_{h}^{n+1},\nabla\cdot\bv_{h})=0~~\forall~\bv_{h}\in \bV_{h},\label{2.72}\\
		&\kappa_{3}(\xi_{h}^{n+1}-\tilde{\xi}_{h}^{n+1},\varphi_{h})+(\nabla\cdot\bu_{h}^{n+1}-\nabla\cdot\tilde{\bu}_{h}^{n+1},\varphi_{h})=0~~\forall \varphi_{h}\in W_{h}.\label{2.73}
	\end{eqnarray}
	Adding \reff{2.72} and \reff{2.73}, letting $ \bv_{h}=\bu_{h}^{n+1}-\tilde{\bu}_{h}^{n+1}, \varphi_{h}=\xi_{h}^{n+1}-\tilde{\xi}_{h}^{n+1}$, using \reff{3.207}, we have
	\begin{align}
		0\leq C_{4}\left\|\varepsilon(\bu_{h}^{n+1})-\varepsilon(\tilde{\bu}_{h}^{n+1}) \right\|^{2}_{L^{2}(\varOmega)}+\kappa_{3}\left\|\xi_{h}^{n+1}-\tilde{\xi}_{h}^{n+1} \right\|^{2}_{L^{2}(\varOmega)} =0.\label{eq210907-1}
	\end{align}
	Using \reff{eq210907-1} and the initial value $ \bu_{0}$, we obtain
	\begin{align*}
		\bu_{h}^{n+1}=\tilde{\bu}_{h}^{n+1},~~~\xi_{h}^{n+1}=\xi_{h}^{n+1}.	
	\end{align*}
	Since $ p_{h}^{n+1}=\kappa_{1}\xi_{h}^{n+1}+\kappa_{2}\eta_{h}^{n+1}, q_{h}^{n+1}=\kappa_{1}\eta_{h}^{n+1}-\kappa_{3}\xi_{h}^{n+1}$, so we have
	\begin{align*}
		p_{h}^{n+1}=\tilde{p}_{h}^{n+1},~~~~q_{h}^{n+1}=\tilde{q}_{h}^{n+1}.
	\end{align*}	
	Hence, the assumption is false, so  the problem \reff{3.4}-\reff{3.6} has a  unique weak solution. The proof is complete.
\end{proof}

\section{Error estimates}\label{sec-3.3}
To derive the optimal order error estimates of the fully discrete multiphysics finite element method,
for any $\varphi \in L^{2}(\Omega)$, we firstly define $L^{2}(\Omega)$-projection operators $\mathcal{Q}_{h}: L^{2}(\Omega)\rightarrow X^{k}_{h}$  by
\begin{eqnarray}
	(\mathcal{Q}_{h}\varphi,\psi_{h})=(\varphi,\psi_{h})~~~~~\psi_{h}\in X^{k}_{h},\label{eq210607-1}
\end{eqnarray}
where $X^{k}_{h}:=\{\psi_{h}\in C^{0};~\psi_{h}|_{E}\in P_{k}(E)~\forall E\in\mathcal{T}_{h}\}$.

Next, for any $\varphi\in H^{1}(\Omega)$, we define its elliptic projection $\mathcal{S}_{h}: H^{1}(\Omega)\rightarrow X^{k}_{h}$ by
\begin{align}
	(K\nabla \mathcal{S}_{h}\varphi,&\nabla\varphi_{h})=(K\nabla\varphi,\nabla\varphi_{h})~~~~~\forall \varphi_{h}\in X^{k}_{h},\\
	&(\mathcal{S}_{h}\varphi,1)=(\varphi,1).
\end{align}
Finally, for any $\textbf{v}\in\textbf{H}^{1}(\Omega)$, we define its elliptic projection $\mathcal{R}_{h}: \textbf{H}^{1}(\Omega)\rightarrow\textbf{V}^{k}_{h}$ by
\begin{eqnarray}
	(\varepsilon(\mathcal{R}_{h}\textbf{v}),\varepsilon(\textbf{w}_{h}))=(\varepsilon(\textbf{v}),\varepsilon(\textbf{w}_{h}))~~~~~
	\forall\textbf{w}_{h}\in \textbf{V}^{k}_{h},
\end{eqnarray}
where $\textbf{V}^k_{h}:=\{\textbf{v}_{h} \in \textbf{C}^{0};~\textbf{v}_{h}|_{\mathcal{K}}\in \textbf{P}_{k}(\mathcal{K}), (\textbf{v}_{h}, \textbf{r})=0 ~\forall \textbf{r} \in \textbf{RM}\}$, $k$ is the degree of the piecewise polynomial on $\mathcal{K}$. From \cite{bs08}, we know that  $\mathcal{Q}_{h},\mathcal{S}_{h}$ and $\mathcal{R}_{h}$ satisfy
\begin{align}
	\|\mathcal{Q}_{h}\varphi-\varphi\|_{L^{2}(\Omega)}+&h\|\nabla(\mathcal{Q}_{h}\varphi-\varphi)\|_{L^{2}(\Omega)}\label{3.41}\\
	&\leq Ch^{s+1}\|\varphi\|_{H^{s+1}(\Omega)}~~\forall \varphi\in H^{s+1}(\Omega),~~  0\leq s\leq k,\no\\ \|\mathcal{S}_{h}\varphi-\varphi\|_{L^{2}(\Omega)}+&h\|\nabla(\mathcal{S}_{h}\varphi-\varphi)\|_{L^{2}(\Omega)}\label{3.44}\\ &\leq Ch^{s+1}\|\varphi\|_{H^{s+1}(\Omega)}~~\forall \varphi\in H^{s+1}(\Omega),~~  0\leq s\leq k,\no\\
	\|\mathcal{R}_{h}\textbf{v}-\textbf{v}\|_{L^{2}(\Omega)}+&h\|\nabla(\mathcal{R}_{h}\textbf{v}-\textbf{v})\|_{L^{2}(\Omega)}\label{3.46}\\
	&\leq Ch^{s+1}\|\textbf{v}\|_{H^{s+1}(\Omega)}~~\forall \textbf{v}\in \textbf{H}^{s+1}(\Omega),~~  0\leq s\leq k.\no
\end{align}
To derive the error estimates, we introduce the following notations
\begin{align*}	&E_{\bu}^{n}=\bu(t_{n})-\bu_{h}^{n},~~~E_{\xi}^{n}=\xi(t_{n})-\xi_{h}^{n},~~~~E_{\eta}^{n}=\eta(t_{n})-\eta_{h}^{n},~~~~\\
	&E_{p}^{n}=p(t_{n})-p_{h}^{n},~~~~E_{q}^{n}=q(t_{n})-q_{h}^{n}.	
\end{align*}
It is easy to check out
\begin{align}
	E_{p}^{n}=\kappa_{1}E_{\xi}^{n}+\kappa_{2}E_{\eta}^{n},~~~~E_{q}^{n}=\kappa_{3}E_{\xi}^{n}+\kappa_{1}E_{\eta}^{n}.
\end{align}
Also, we denote
\begin{align*}
	&E_{\bu}^{n}=\bu(t_{n})-\mathcal{R}_{h}(\bu(t_{n}))+\mathcal{R}_{h}(\bu(t_{n}))-\bu_{h}^{n}:=Y_{\bu}^{n}+Z_{\bu}^{n},\\
	&E_{\xi}^{n}=\xi(t_{n})-\mathcal{S}_{h}(\xi(t_{n}))+\mathcal{S}_{h}(\xi(t_{n}))-\xi_{h}^{n}:=Y_{\xi}^{n}+Z_{\xi}^{n},\\
	&E_{\eta}^{n}=\eta(t_{n})-\mathcal{S}_{h}(\eta(t_{n}))+\mathcal{S}_{h}(\eta(t_{n}))-\eta_{h}^{n}:=Y_{\eta}^{n}+Z_{\eta}^{n},\\
	&E_{p}^{n}=p(t_{n})-\mathcal{S}_{h}(p(t_{n}))+\mathcal{S}_{h}(p(t_{n}))-p_{h}^{n}:=Y_{p}^{n}+Z_{p}^{n},\\
	&E_{\xi}^{n}=\xi(t_{n})-\mathcal{Q}_{h}(\xi(t_{n}))+\mathcal{Q}_{h}(\xi(t_{n}))-\xi_{h}^{n}:=F_{\xi}^{n}+G_{\xi}^{n},\\
	&E_{\eta}^{n}=\eta(t_{n})-\mathcal{Q}_{h}(\eta(t_{n}))+\mathcal{Q}_{h}(\eta(t_{n}))-\eta_{h}^{n}:=F_{\eta}^{n}+G_{\eta}^{n},	\\
	&E_{p}^{n}=p(t_{n})-\mathcal{Q}_{h}(p(t_{n}))+\mathcal{S}_{h}(p(t_{n}))-p_{h}^{n}:=F_{p}^{n}+G_{p}^{n}.
\end{align*}
\begin{lemma}
	Let $ \left\lbrace (\bu_{h}^{n}, \xi_{h}^{n}, \eta_{h}^{n})\right\rbrace_{n\geq0}  $ be generated by the (MFEA) and $Y_{\bu}^{n}, Z_{\bu}^{n}, Y_{\xi}^{n}, Z_{\xi}^{n}, Y_{\eta}^{n}$ and $Z_{\eta}^{n}$ be defined as above. Then there holds
	\begin{align}
		&\mathcal{E}_{h}^{l}+\varDelta t\sum_{n=1}^{l}C_{4}\left\|\varepsilon(Z_{\bu}^{n+1}) \right\|_{L^{2}(\varOmega)}^{2}+\varDelta t\sum_{n=1}^{l}\left[\dfrac{K}{\mu_{f}}(\nabla\hat{Z}_{p}^{n+1},\nabla\hat{Z}_{p}^{n+1})\right.\label{3.48}\\
		&\left.+\dfrac{\kappa_{2}\varDelta t}{2}\left\|d_{t}G_{\eta}^{n+\theta}\right\|_{L^{2}(\varOmega)}^{2}+\dfrac{\kappa_{3}\varDelta t}{2}\left\|d_{t}G_{\xi}^{n+1}\right\|_{L^{2}(\varOmega)}^{2}\right] \nonumber\\
		&\leq\mathcal{E}_{h}^{0}+\varDelta t\sum_{n=1}^{l}\left[
		(F_{\xi}^{n+1},\nabla\cdot Z_{\bu}^{n+1})-(\nabla\cdot d_{t}Z_{\bu}^{n+1},G_{\xi}^{n+1})\right]\nonumber\\
		&+\varDelta t\sum_{n=1}^{l}\left[(G_{\xi}^{n+1},\nabla\cdot Z_{\bu}^{n+1})-(\nabla\cdot d_{t}Y_{\bu}^{n+1},G_{\xi}^{n+1}) \right] \nonumber\\
		&+\kappa_{1}(1-\theta)(\varDelta t)^{2}\sum_{n=1}^{l}(d_{t}^{2}\eta(t_{n+1}),G_{\xi}^{n+1})+\varDelta t\sum_{n=1}^{l}(R_{h}^{n+\theta},\hat{Z}_{p}^{n+1})\nonumber\\
		&+\varDelta t\sum_{n=1}^{l}(\mathcal{N}(\nabla\mathcal{R}_{h}\bu(t_{n+1}))-\mathcal{N}(\nabla\bu(t_{n+1})),\varepsilon(Z_{\bu}^{n+1})) \nonumber\\
		&+(1-\theta)(\varDelta t)^{2}\sum_{n=1}^{l}\dfrac{\kappa_{1}}{\mu_{f}}(Kd_{t}\nabla Z_{\xi}^{n+1},\nabla\hat{Z}_{p}^{n+1})\no\\
		&+\varDelta t\sum_{n=1}^{l}(d_{t}G_{\eta}^{n+\theta},Y_{p}^{n+1}-F_{p}^{n+1}),\nonumber
	\end{align}
	where
	\begin{align*}
		&\hat{Z}_{p}^{n+1}:=F_{p}^{n+1}-Y_{p}^{n+1}+\kappa_{1}G_{\xi}^{n+1}+\kappa_{2}G_{\eta}^{n+\theta},\\
		&\mathcal{E}_{h}^{l}:=\dfrac{1}{2}\left[\kappa_{2}\left\|G_{\eta}^{l+\theta} \right\|_{L^{2}(\varOmega)}^{2}+\kappa_{3}\left\|G_{\xi}^{l+1} \right\|_{L^{2}(\varOmega)}^{2}  \right], \\
		&R_{h}^{n+1}:=-\dfrac{1}{\varDelta t}\int_{t_{n}}^{t_{n+1}}(s-t_{n})\eta_{tt}(s)ds.
	\end{align*}
\end{lemma}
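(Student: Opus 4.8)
The plan is to run the stability argument of Lemma~\ref{lma3.3} on the error equations rather than on the scheme itself. First I would subtract \reff{3.4}--\reff{3.6} from the weak formulation \reff{2.12}--\reff{2.14} evaluated at the matching levels (the momentum and constraint relations at $t_{n+1}$, the Darcy relation at $t_{n+\theta}$, using the index shift of Lemma~\ref{lma3.3}), and substitute the two splittings $E_\bullet=Y_\bullet+Z_\bullet$ (elliptic projections $\mathcal{R}_h,\mathcal{S}_h$) and $E_\bullet=F_\bullet+G_\bullet$ ($L^2$-projection $\mathcal{Q}_h$). In the momentum relation the nonlinear term $(\mathcal{N}(\nab\bu(t_{n+1}))-\mathcal{N}(\nab\bu_h^{n+1}),\vepsi(\bv_h))$ is rewritten, via $\pm\mathcal{N}(\nab\mathcal{R}_h\bu(t_{n+1}))$, as $(\mathcal{N}(\nab\mathcal{R}_h\bu(t_{n+1}))-\mathcal{N}(\nab\bu_h^{n+1}),\vepsi(\bv_h))$ --- whose argument is the testable discrete error $Z_\bu^{n+1}=\mathcal{R}_h\bu(t_{n+1})-\bu_h^{n+1}$ --- minus the remainder $(\mathcal{N}(\nab\mathcal{R}_h\bu(t_{n+1}))-\mathcal{N}(\nab\bu(t_{n+1})),\vepsi(\bv_h))$, sent to the right-hand side. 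In the Darcy relation the backward-Euler defect equals $-R_h^{n+\theta}$ by Taylor's formula with integral remainder, producing $(R_h^{n+\theta},\hat Z_p^{n+1})$; and in the constraint relation with $\theta=0$ the scheme carries $\eta_h^{n}$ against $\eta(t_{n+1})$, so after taking the backward difference and writing $d_t\eta(t_{n+1})-d_t\eta_h^{n}=\varDelta t\,d_t^2\eta(t_{n+1})+d_t E_\eta^{n}$ one picks up the $\kappa_1(1-\theta)(\varDelta t)^2\sum(d_t^2\eta(t_{n+1}),G_\xi^{n+1})$ contribution.

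Next I would test with $\bv_h=Z_\bu^{n+1}$ in the momentum relation, so that the monotonicity estimate \reff{3.207} supplies $C_4\|\vepsi(Z_\bu^{n+1})\|_{L^2(\Omega)}^2$ on the left while $(E_\xi^{n+1},\div Z_\bu^{n+1})=(F_\xi^{n+1}+G_\xi^{n+1},\div Z_\bu^{n+1})$ goes right; with $\varphi_h=G_\xi^{n+1}$ in the backward difference of the constraint relation, so that $\mathcal{Q}_h$-orthogonality kills $\kappa_3(d_tF_\xi^{n+1},G_\xi^{n+1})$, the identity $(d_ta^{n+1},a^{n+1})=\tfrac{\varDelta t}{2}\|d_ta^{n+1}\|^2+\tfrac12 d_t\|a^{n+1}\|^2$ generates the $\kappa_3$-dissipation, and $(\div d_t E_\bu^{n+1},G_\xi^{n+1})=(\div d_t Z_\bu^{n+1}+\div d_t Y_\bu^{n+1},G_\xi^{n+1})$ moves right; and with $\psi_h=\hat Z_p^{n+1}$ in the Darcy relation, where the defining property of $\mathcal{S}_h$, $(K\nab Y_\xi,\nab\varphi_h)=(K\nab Y_\eta,\nab\varphi_h)=0$ for $\varphi_h\in X_h^k$, collapses the Darcy bilinear form onto $\nab\hat Z_p^{n+1}$ and produces $\tfrac1{\mu_f}(K\nab\hat Z_p^{n+1},\nab\hat Z_p^{n+1})$. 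This is exactly why $\hat Z_p^{n+1}$ is defined as $F_p^{n+1}-Y_p^{n+1}+\kappa_1 G_\xi^{n+1}+\kappa_2 G_\eta^{n+\theta}$: it lies in $X_h^k$, it equals the genuine discrete pressure error $\mathcal{S}_h p(t_{n+1})-p_h^{n+1}$ when $\theta=1$, and for $\theta=0$ the $\eta_h^{n+\theta}$-lag in the reconstruction $p_h^{n+1}=\kappa_1\xi_h^{n+1}+\kappa_2\eta_h^{n+\theta}$ forces the time-shift term $(1-\theta)(\varDelta t)^2\frac{\kappa_1}{\mu_f}\sum(Kd_t\nab Z_\xi^{n+1},\nab\hat Z_p^{n+1})$ and the residual $(d_t G_\eta^{n+\theta},Y_p^{n+1}-F_p^{n+1})$. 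Adding the three tested relations, the $\kappa_1 d_t\eta_h$-coupling between the differenced constraint and the Darcy equation cancels through $p_h^{n+1}$; summing over $n=1,\dots,l$ telescopes the $\tfrac12 d_t\|\cdot\|^2$ terms into $\mathcal{E}_h^l-\mathcal{E}_h^0$, and everything not absorbed on the left is exactly the right-hand side of \reff{3.48}.

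The step I expect to be the main obstacle is the bookkeeping forced by the two incompatible projections --- $\mathcal{S}_h$ for the $K$-gradient terms, $\mathcal{Q}_h$ for the $L^2$-coupling and dissipation terms --- together with the nonstandard pressure reconstruction. Since the Darcy equation \reff{3.6} contains $\kappa_2\eta_h^{n+1}$ whereas the scheme stores $p_h^{n+1}=\kappa_1\xi_h^{n+1}+\kappa_2\eta_h^{n+\theta}$, the quantity against which the Darcy form is coercive is neither $Z_p$ nor $G_p$ but the hybrid $\hat Z_p$; one must carry the differences $Z_\bullet-G_\bullet=\mathcal{S}_h(\cdot)-\mathcal{Q}_h(\cdot)$ and the $\eta_h^{n+\theta}$-lag through every term so that each factor on the right of \reff{3.48} appears with precisely the indicated projection. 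A closely related delicate point is matching the coupling term generated by $\bv_h=Z_\bu^{n+1}$ in the momentum relation against the one generated by $\varphi_h=G_\xi^{n+1}$ in the differenced constraint relation: since these are tested with $Z_\bu^{n+1}$ and with $d_t Z_\bu^{n+1}$ respectively they do not simply cancel, and one must check that the survivors are exactly $(F_\xi^{n+1},\div Z_\bu^{n+1})$, $(G_\xi^{n+1},\div Z_\bu^{n+1})$, $-(\div d_t Z_\bu^{n+1},G_\xi^{n+1})$ and $-(\div d_t Y_\bu^{n+1},G_\xi^{n+1})$. Everything else --- Cauchy--Schwarz on the projection remainders and the Lipschitz bound \reff{3.206} on the nonlinear remainder, which turn \reff{3.48} into a rate in the subsequent estimates --- is routine and parallels Lemma~\ref{lma3.3}.
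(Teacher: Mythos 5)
Your proposal follows essentially the same route as the paper's proof: subtract the scheme from the weak formulation, insert the two projection splittings, test the momentum equation with $Z_{\bu}^{n+1}$ (using the monotonicity \reff{3.207} after adding and subtracting $\mathcal{N}(\nabla\mathcal{R}_{h}\bu(t_{n+1}))$), test the time-differenced constraint equation with $G_{\xi}^{n+1}$, test the Darcy equation with $\hat{Z}_{p}^{n+1}$, then add and sum over $n$ so the $\tfrac12 d_t\|\cdot\|^2$ terms telescope into $\mathcal{E}_h^l-\mathcal{E}_h^0$. The choice of test functions, the source of each right-hand-side term (including the $(1-\theta)$ lag terms and the nonlinear projection remainder), and the bookkeeping you flag as delicate all match the paper's argument, so the proposal is correct.
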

\begin{proof}
	Subtracting \reff{3.4} from \reff{2.12}, \reff{3.5} from \reff{2.13}, \reff{3.6}  from \reff{2.14}, respectively, we get
	\begin{eqnarray}
		&&\quad(\mathcal{N}(\nabla\bu(t_{n+1}))-\mathcal{N}(\nabla\mathbf{u}_{h}^{n+1}),\varepsilon(\bv_{h}))-(E_{\xi}^{n+1},\nabla\cdot\bv_{h})=0\quad  \forall~\bv_{h}\in \bV_{h},\\
		&&\quad\kappa_{3}(E_{\xi}^{n+1},\varphi_{h})+(\nabla\cdot E_{\bu}^{n+1},\varphi_{h})=\kappa_{1}(E_{\eta}^{n+\theta},\varphi_{h})\\
		&&\qquad+\kappa_{1}(1-\theta)\varDelta t(d_{t}\eta(t_{n+1}),\varphi_{h}) \quad\forall \varphi_{h}\in M_{h},\no\\
		&&\quad(d_{t}E_{\eta}^{n+\theta},\psi_{h})+\dfrac{1}{\mu_{f}}(K(\nabla E_{p}^{n+1},\nabla\psi_{h}) \\
		&&\quad\quad-(1-\theta)\dfrac{\kappa_{1}\varDelta t}{\mu_{f}}\left(Kd_{t}\nabla E_{\xi}^{n+1},\nabla\psi_{h}  \right)=(R_{h}^{n+\theta},\psi_{h}) \quad\forall\psi_{h}\in W_{h},\no\\
		&&\quad E_{\bu}^{0}=0, E_{\xi}^{0}=0, E_{\eta}^{-1}=0.
	\end{eqnarray}
	Using the definitions of the projection operators $ \mathcal{Q}_{h},\mathcal{S}_{h},\mathcal{R}_{h}$, we have
	\begin{align}
		&(\mathcal{N}(\nabla\bu(t_{n+1}))-\mathcal{N}(\nabla\mathbf{u}_{h}^{n+1}),\varepsilon(\bv_{h}))-(G_{\xi}^{n+1},\nabla\cdot\bv_{h})\label{3.53}\\
		&~~~~~~~~~~~~~~~~~~~~~~~~~~~~~~~~~~~~~=(F_{\xi}^{n+1},\nabla\cdot\bv_{h})~~~~~~~~~~~~\forall~\bv_{h}\in \bV_{h},\no\\
		&\kappa_{3}(G_{\xi}^{n+1},\varphi_{h})+(\nabla\cdot Z_{\bu}^{n+1},\varphi_{h})=\kappa_{1}(G_{\eta}^{n+\theta},\varphi_{h})\label{3.54}\\
		&~~~~~~~-(\nabla\cdot Y_{\bu}^{n+1},\varphi_{h})+\kappa_{1}(1-\theta)\varDelta t(d_{t}\eta(t_{n+1}),\varphi_{h})~~~~\forall \varphi_{h}\in M_{h},\no\\
		&(d_{t}G_{\eta}^{n+\theta},\psi_{h})+\dfrac{1}{\mu_{f}}(K(\nabla\hat{Z}_{p}^{n+1},\nabla\psi_{h})-(1-\theta)\dfrac{\kappa_{1}\varDelta t}{\mu_{f}}\left(Kd_{t}\nabla E_{\xi}^{n+1},\nabla\psi_{h}  \right) \label{3.55}\\
		&~~~~~~~~~~~~~~~~~~~~~~~~~~~~~~~~~~~~=(R_{h}^{n+\theta},\psi_{h})~~~~~~~~~~~~~~~~~~~\forall~\psi_{h}\in W_{h}.\no
	\end{align}
	Setting $ \bv_{h}=Z_{\bu}^{n+1} $ in \reff{3.53}, we have
	\begin{align}
		(\mathcal{N}(\nabla\bu(t_{n+1}))-\mathcal{N}(\nabla\mathbf{u}_{h}^{n+1}),\varepsilon(Z_{\bu}^{n+1}))&-(G_{\xi}^{n+1},\nabla\cdot Z_{\bu}^{n+1})\label{3.64}\\
		&=(F_{\xi}^{n+1},\nabla \cdot Z_{\bu}^{n+1}).\no
	\end{align}
	Using \reff{3.64}, we get 
	\begin{align}
		&(\mathcal{N}(\nabla\mathcal{R}_{h}\bu(t_{n+1})))-\mathcal{N}(\nabla\mathbf{u}_{h}^{n+1}),\varepsilon(Z_{\bu}^{n+1}))=(G_{\xi}^{n+1},\nabla\cdot Z_{\bu}^{n+1})\label{3.58}\\	&~~~~+(\mathcal{N}(\nabla\mathcal{R}_{h}\bu(t_{n+1}))-\mathcal{N}(\nabla\mathbf{u}(t_{n+1})),\varepsilon(Z_{\bu}^{n+1}))+(F_{\xi}^{n+1},\nabla\cdot Z_{\bu}^{n+1}).\nonumber
	\end{align}
	Combining \reff{3.207} and \reff{3.58}, we have
	\begin{align}
		&(\mathcal{N}(\nabla\mathcal{R}_{h}\bu(t_{n+1}))-\mathcal{N}(\nabla\mathbf{u}_{h}^{n+1}),\varepsilon(Z_{\bu}^{n+1}))\label{3.59}\\
		&=(\mathcal{N}(\nabla\mathcal{R}_{h}\bu(t_{n+1})))-\mathcal{N}(\nabla\mathbf{u}_{h}^{n+1}),\varepsilon(\mathcal{R}_{h}\bu(t_{n+1})-\varepsilon(\bu_{h}^{n+1}))\nonumber\\
		&\geq C_{4 }\left\|\varepsilon(\mathcal{R}_{h}\bu(t_{n+1})-\varepsilon(\bu_{h}^{n+1}) \right\|_{L^{2}(\varOmega)}^{2}=C_{4}\left\|\varepsilon(Z_{\bu}^{n+1}) \right\|_{L^{2}(\varOmega)}^{2}.\nonumber
	\end{align}
Using \reff{3.58} and \reff{3.59},	we have
	\begin{align}
		C_{4}\left\|\varepsilon(Z_{\bu}^{n+1}) \right\|_{L^{2}(\varOmega)}^{2}&\leq(\mathcal{N}(\nabla\mathcal{R}_{h}\bu(t_{n+1}))-\mathcal{N}(\nabla\mathbf{u}(t_{n+1})),\varepsilon(Z_{\bu}^{n+1}))\label{3.60}\\
		&+(G_{\xi}^{n+1},\nabla\cdot Z_{\bu}^{n+1})+(F_{\xi}^{n+1},\nabla\cdot Z_{\bu}^{n+1}).\nonumber
	\end{align}
	Setting $ \varphi_{h}=G_{\xi}^{n+1} $ after applying the difference operator $ d_{t} $ to \reff{3.54}, we get
	\begin{align}
		&\kappa_{3}(d_{t}G_{\xi}^{n+1},G_{\xi}^{n+1})+(\nabla\cdot (d_{t}Z_{\bu}^{n+1}),G_{\xi}^{n+1})=\kappa_{1}(d_{t}G_{\eta}^{n+\theta},G_{\xi}^{n+1})\label{3.61}\\
		&~~-(\nabla\cdot (d_{t}Y_{\bu}^{n+1}),G_{\xi}^{n+1})+\kappa_{1}(1-\theta)\varDelta t(d_{t}^{2}\eta(t_{n+1}),G_{\xi}^{n+1}).\nonumber
	\end{align}
	Setting $ \psi_{h}=\hat{Z}_{p}^{n+1}=F_{p}^{n+1}-Y_{p}^{n+1}+\kappa_{1}G_{\xi}^{n+1}+\kappa_{2}G_{\eta}^{n+\theta} $, we obtain
	\begin{align}
		&(d_{t}G_{\eta}^{n+\theta},\hat{Z}_{p}^{n+1})-(1-\theta)\dfrac{\kappa_{1}\varDelta t}{\mu_{f}}\left(Kd_{t} \nabla Z_{\xi}^{n+1},\nabla\hat{Z}_{p}^{n+1}  \right)\label{3.62} \\
		&+\dfrac{1}{\mu_{f}}(K(\nabla\hat{Z}_{p}^{n+1},\nabla\hat{Z}_{p}^{n+1})=(R_{h}^{n+\theta},\hat{Z}_{p}^{n+1})\no,
	\end{align}
	adding \reff{3.60}, \reff{3.61} and \reff{3.62},  applying the summation operator $ \varDelta t \sum_{n=1}^{l} $ to both sides, we get \reff{3.48}. The proof is complete.
\end{proof}
\begin{theorem}\label{2020}
	Let $ \left\lbrace (\bu_{h}^{n},\xi_{h}^{n},\eta_{h}^{n})\right\rbrace_{n\geq0}  $ be defined by the (MFEA), then there holds
	\begin{align}
		&\max_{0\leq n \leq l}\left[\sqrt{C_{4}}\left\|\varepsilon(Z_{\bu}^{n+1}) \right\|_{L^{2}(\varOmega)}+\sqrt{\kappa_{2}}\left\|G_{\eta}^{n+\theta} \right\|_{L^{2}(\varOmega)}+\sqrt{\kappa_{3}}\left\|G_{\xi}^{n+1} \right\|_{L^{2}(\varOmega)}\right]\label{3.63} \\
		&+\left[  \varDelta t\sum_{n=1}^{l}\dfrac{K}{\mu_{f}}\left\|\nabla\hat{Z}_{p}^{n+1} \right\|_{L^{2}(\varOmega)}^{2}  \right]\leq \hat{C}_{1}(T)\varDelta t+\hat{C}_{2}(T)h^{2} \nonumber
	\end{align}
	provided that $ \varDelta t=O(h^{2}) $ when $ \theta=0 $ and $ \varDelta t>0 $ when $ \theta=1 $. Here
	\begin{align}
		&\hat{C}_{1}(T)=\hat{C}\left\|\eta_{t} \right\|_{L^{2}((0,T);L^{2}(\varOmega))}+\hat{C}\left\|\eta_{tt}\right\|_{L^{2}((0,T);H^{1}(\varOmega)^{'})},\\
		& \hat C_{2}(T)=\hat{C}\left\|\xi_{t} \right\|_{L^{2}((0,T);H^{2}(\varOmega))}+\hat{C}\left\|\xi \right\|_{L^{\infty}((0,T);H^{2}(\varOmega))}\\
		&+\hat{C}\left\|\bu \right\|_{L^{2}((0,T);H^{3}(\varOmega))}+\hat{C}\left\|\nabla\cdot\bu_{t} \right\|_{L^{2}((0,T);H^{2}(\varOmega))}.\no
	\end{align}
\end{theorem}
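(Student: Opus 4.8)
\textbf{Proof proposal for Theorem \ref{2020}.}

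The plan is to start from the error identity \reff{3.48} of the preceding lemma and bound each of the seven groups of terms on its right-hand side, absorbing the ``good'' quadratic contributions into the left-hand side and treating everything else as data. First I would estimate $\mathcal{E}_h^0$: since $E_{\bu}^0=0$, $E_{\xi}^0=0$, $E_{\eta}^{-1}=0$ and, when $\theta=0$, $\eta_h^{-1}$ is fixed by \reff{3.24}, the quantities $G_{\eta}^{0+\theta}$ and $G_{\xi}^{1}$ are controlled purely by the projection errors at $t_0$ (and, for $\theta=0$, by a one-step consistency term), so $\mathcal{E}_h^0 \le \hat C h^{2}$ by \reff{3.41}, \reff{3.44}. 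Next I would handle the two lines involving $(F_{\xi}^{n+1},\nabla\cdot Z_{\bu}^{n+1})$, $(G_{\xi}^{n+1},\nabla\cdot Z_{\bu}^{n+1})$, $(\nabla\cdot d_t Z_{\bu}^{n+1},G_{\xi}^{n+1})$ and $(\nabla\cdot d_t Y_{\bu}^{n+1},G_{\xi}^{n+1})$: for the first and second, Cauchy--Schwarz plus $\|\nabla\cdot Z_{\bu}^{n+1}\|_{L^2}\le C\|\varepsilon(Z_{\bu}^{n+1})\|_{L^2}$ and Young's inequality with a small parameter lets me absorb $\varepsilon$-terms into the $C_4\|\varepsilon(Z_{\bu}^{n+1})\|^2$ sum while leaving $\|F_{\xi}^{n+1}\|_{L^2}^2$ and (after a discrete summation by parts in $n$, to avoid putting a time-difference on $Z_{\bu}$) a telescoped $\|G_{\xi}\|^2$ plus projection errors. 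The terms $(\mathcal{N}(\nabla\mathcal{R}_h\bu(t_{n+1}))-\mathcal{N}(\nabla\bu(t_{n+1})),\varepsilon(Z_{\bu}^{n+1}))$ are bounded via Lemma \ref{lma210903-2}: $\|\mathcal{N}(\nabla\mathcal{R}_h\bu)-\mathcal{N}(\nabla\bu)\|_{L^2}\le C_3\|\varepsilon(\mathcal{R}_h\bu-\bu)\|_{L^2}\le C_3\|\varepsilon(Y_{\bu})\|_{L^2}$, which by \reff{3.46} is $O(h^{2})$ when $\bu\in H^3$, and then Young's inequality.

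The consistency terms come next. The truncation remainder satisfies $\|R_h^{n+1}\|_{H^1(\Ome)'} \le C(\Delta t)^{1/2}\|\eta_{tt}\|_{L^2(t_n,t_{n+1};H^1(\Ome)')}$ by Cauchy--Schwarz on the integral defining $R_h^{n+1}$, so $\varDelta t\sum_n (R_h^{n+\theta},\hat Z_p^{n+1})$ is bounded by $\hat C\,\varDelta t\,\|\eta_{tt}\|_{L^2(H^1(\Ome)')}\cdot\bigl(\varDelta t\sum_n\|\nabla\hat Z_p^{n+1}\|_{L^2}^2\bigr)^{1/2}$ after using $\|\hat Z_p^{n+1}\|_{H^1}\lesssim \|\nabla\hat Z_p^{n+1}\|_{L^2}+\text{(mean-zero via the projections)}$, hence absorbable into the $\frac{K}{\mu_f}\|\nabla\hat Z_p^{n+1}\|^2$ sum up to an $O((\varDelta t)^2)$ data term. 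When $\theta=0$ the extra term $\kappa_1(\varDelta t)^2\sum_n(d_t^2\eta(t_{n+1}),G_{\xi}^{n+1})$ is $O(\varDelta t)$ after writing $(\varDelta t)^2 d_t^2\eta(t_{n+1})=\int$-type differences of $\eta_t$ and summing by parts against $G_\xi$, picking up $\|\eta_t\|_{L^2(L^2)}$; likewise the remaining $\theta=0$ term $(1-\theta)(\varDelta t)^2\sum_n\frac{\kappa_1}{\mu_f}(Kd_t\nabla Z_{\xi}^{n+1},\nabla\hat Z_p^{n+1})$ is the one that forces the CFL restriction: exactly as in the proof of \reff{3.36}, I bound $\|d_t\nabla Z_{\xi}^{n+1}\|_{L^2}\le c_1 h^{-1}\|d_t Z_{\xi}^{n+1}\|_{L^2}$ by the inverse inequality \reff{20210915}, use the inf-sup condition \reff{3.3} together with \reff{3.53} to trade $\|d_t Z_{\xi}^{n+1}\|_{L^2}$ for $\|d_t\varepsilon(Z_{\bu}^{n+1})\|_{L^2}$ plus $\|d_tF_{\xi}^{n+1}\|$, and then the factor $(\varDelta t)^2 h^{-2}$ is tamed precisely when $\varDelta t=O(h^{2})$. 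Finally the term $(d_tG_{\eta}^{n+\theta},Y_p^{n+1}-F_p^{n+1})$ is handled by a summation by parts in $n$ moving $d_t$ onto $Y_p-F_p$ (so it becomes $d_t(Y_p-F_p)\sim (Y_{p_t}-F_{p_t})$, controlled by \reff{3.41}, \reff{3.44} and $p_t$-regularity), leaving boundary-in-time terms $G_{\eta}^{l+\theta}(Y_p^{l+1}-F_p^{l+1})$ etc. that are absorbed into $\kappa_2\|G_{\eta}^{l+\theta}\|^2$ by Young.

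Assembling all bounds, the left-hand side of \reff{3.48} dominates $\mathcal{E}_h^l + \frac12\varDelta t\sum_{n=1}^l C_4\|\varepsilon(Z_{\bu}^{n+1})\|_{L^2}^2 + \frac12\varDelta t\sum_{n=1}^l\frac{K}{\mu_f}\|\nabla\hat Z_p^{n+1}\|_{L^2}^2$, while the right-hand side is $\le \hat C\varDelta t\,\mathcal E_h^{l} + \hat C_1(T)^2(\varDelta t)^2 + \hat C_2(T)^2 h^{4}$ plus a sum $\hat C\varDelta t\sum_{n=1}^{l}\mathcal E_h^{n}$ coming from the telescoped/summation-by-parts remainders; a discrete Grönwall inequality then removes that last sum and yields $\mathcal E_h^l + \varDelta t\sum_n C_4\|\varepsilon(Z_{\bu}^{n+1})\|^2 + \varDelta t\sum_n\frac{K}{\mu_f}\|\nabla\hat Z_p^{n+1}\|^2 \le \hat C_1(T)^2(\varDelta t)^2 + \hat C_2(T)^2 h^{4}$, and taking square roots (using $\sqrt{a+b}\le\sqrt a+\sqrt b$) gives \reff{3.63} with the stated $\hat C_1(T)$, $\hat C_2(T)$. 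The main obstacle is the combination of the two $\theta=0$ coupling remainders — the $(\varDelta t)^2 h^{-2}$ term requiring the inverse inequality plus inf-sup trick, and keeping the resulting $\|d_t\varepsilon(Z_{\bu}^{n+1})\|^2$ genuinely absorbable rather than merely bounded — since that is where the scheme's stability threshold $\varDelta t=O(h^2)$ is actually used and where an over-generous constant would break the Grönwall closure; the projection-error and truncation terms are by comparison routine.
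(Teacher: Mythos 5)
Your proposal follows essentially the same route as the paper's proof: both start from the error identity \reff{3.48}, bound the seven right-hand-side groups via Korn/Cauchy--Schwarz/Young, use the inf-sup condition \reff{3.3} together with the error equation to trade $\xi$-errors for $\varepsilon(Z_{\bu})$-errors, invoke the inverse inequality \reff{20210915} for the $(\varDelta t)^2h^{-2}$ coupling term that forces $\varDelta t=O(h^2)$ when $\theta=0$, bound the $\mathcal{N}$-consistency term by \reff{3.206} with \reff{3.46}, and close with the discrete Gronwall inequality. Your occasional use of discrete summation by parts (for the $d_tZ_{\bu}$ and $d_tG_{\eta}$ pairings) where the paper applies Young's inequality directly is a minor, arguably cleaner, variation rather than a different method.
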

\begin{proof}
Using \reff{3.48} and the fact of $ Z_{\boldsymbol{u}}^{0}=\boldsymbol{0}, Z_{\xi}^{0}=0$ and $ Z_{\eta}^{-1}=0$, we have
	\begin{align}
		&\mathcal{E}_{h}^{l}+\varDelta t\sum_{n=1}^{l}C_{4}\left\|\varepsilon(Z_{\bu}^{n+1}) \right\|_{L^{2}(\varOmega)}^{2}+\varDelta t\sum_{n=1}^{l}\left[\dfrac{K}{\mu_{f}}(\nabla\hat{Z}_{p}^{n+1},\nabla\hat{Z}_{p}^{n+1})\right.\label{3.66}\\
		&\left.+\dfrac{\kappa_{2}\varDelta t}{2}\left\|d_{t}G_{\eta}^{n+\theta}\right\|_{L^{2}(\varOmega)}^{2}+\dfrac{\kappa_{3}\varDelta t}{2}\left\|d_{t}G_{\xi}^{n+1}\right\|_{L^{2}(\varOmega)}^{2}\right] \nonumber\\
		&\leq \Phi_{1}+\Phi_{2}+\Phi_{3}+\Phi_{4}+\Phi_{5}+\Phi_{6},\nonumber
	\end{align}
where
	\begin{align*}
		&\Phi_{1}=\varDelta t\sum_{n=1}^{l}\left[
		(F_{\xi}^{n+1},\nabla\cdot Z_{\bu}^{n+1})-(\nabla\cdot d_{t}Z_{\bu}^{n+1},G_{\xi}^{n+1})\right],\\
		&\Phi_{2}=\varDelta t\sum_{n=1}^{l}\left[(G_{\xi}^{n+1},\nabla\cdot Z_{\bu}^{n+1})-(\nabla\cdot d_{t}Y_{\bu}^{n+1},G_{\xi}^{n+1})\right],\\
		&\Phi_{3}=\kappa_{1}(1-\theta)(\varDelta t)^{2}\sum_{n=1}^{l}(d_{t}^{2}\eta(t_{n+1}),G_{\xi}^{n+1}),\\
		&\Phi_{4}=\varDelta t\sum_{n=1}^{l}(R_{h}^{n+\theta},\hat{Z}_{p}^{n+1}),\\
		&\Phi_{5}=\varDelta t\sum_{n=1}^{l}(\mathcal{N}(\nabla\mathcal{R}_{h}\bu(t_{n+1}))-\mathcal{N}(\nabla\mathbf{u}(t_{n+1})),\varepsilon(Z_{\bu}^{n+1})), \\
		&\Phi_{6}=(1-\theta)(\varDelta t)^{2}\sum_{n=1}^{l}\dfrac{\kappa_{1}}{\mu_{f}}(Kd_{t}\nabla Z_{\xi}^{n+1},\nabla\hat{Z}_{p}^{n+1}),\\
		&\Phi_{7}=\varDelta t\sum_{n=1}^{l}(d_{t}G_{\eta}^{n+\theta},Y_{p}^{n+1}-F_{p}^{n+1}).
	\end{align*}
Next, we estimate each term on the right-hand of \reff{3.66}. For $\Phi_{1}$, using Korn's inequality, Cauchy-Schwarz inequality, Young inequality and the inequality of $ \left\|\nabla\cdot\bw\right\| _{L^2(\varOmega)}\leq c_{k}\left\| \varepsilon(\bw)\right\|_{L^2(\varOmega)}$ for all $\bw\in\bH_\perp^1(\Ome)$, we obtain
	\begin{align}\label{3.67}
		&\Phi_{1}=\varDelta t\sum_{n=1}^{l}\left[
		(F_{\xi}^{n+1},\nabla\cdot Z_{\bu}^{n+1})-(\nabla\cdot d_{t}Z_{\bu}^{n+1},G_{\xi}^{n+1})\right]\\
		&\leq\dfrac{\varDelta t}{2}\sum_{n=1}^{l}\left[\left\|F_{\xi}^{n+1} \right\|_{L^{2}(\varOmega)}^{2}+c_{k}\left\|\varepsilon(Z_{\bu}^{n+1}) \right\|_{L^{2}(\varOmega)}^{2}+c_{k}\left\|d_{t}\varepsilon(Z_{\bu}^{n+1})\right\|_{L^{2}(\varOmega)}^{2}\right.\no\\
		&\left.~~~~+\left\|G_{\xi}^{n+1} \right\|_{L^{2}(\varOmega)}^{2}\right].\nonumber
	\end{align}
	Similarly, using Korn's inequality, the Cauchy-Schwarz inequality and Young inequality for $ \Phi_{2}$, we get
	\begin{align}
		&\Phi_{2}=\varDelta t\sum_{n=1}^{l}\left[(G_{\xi}^{n+1},\nabla\cdot Z_{\bu}^{n+1})-(\nabla\cdot d_{t}Y_{\bu}^{n+1},G_{\xi}^{n+1})\right]\label{3.68} \\
		&\leq\dfrac{\varDelta t}{2}\sum_{n=1}^{l}\left[\left\|G_{\xi}^{n+1} \right\|_{L^{2}(\varOmega)}^{2}+c_{k}\left\|\varepsilon(Z_{\bu}^{n+1} )\right\|_{L^{2}(\varOmega)}^{2}+\left\|\nabla\cdot (d_{t}Y_{\bu}^{n+1}) \right\|_{L^{2}(\varOmega)}^{2}\right.\no\\
		&~~~~\left.+\left\|G_{\xi}^{n+1} \right\|_{L^{2}(\varOmega)}^{2}\right].\nonumber
	\end{align} 
	When $ \theta=0$, using the integration by parts and $d_{t}\eta(t_{0})=0$, we get 
\begin{eqnarray}
		&&\Phi_{3}=\kappa_{1}(\varDelta t)^{2}\sum_{n=1}^{l}(d_{t}^{2}\eta(t_{n+1}),G_{\xi}^{n+1})\label{3.69}\\
		&&\quad=\kappa_{1}(\varDelta t)^{2}\left[ \dfrac{1}{\varDelta t}(d_{t}\eta(t_{l+1}),G_{\xi}^{l+1})-\sum_{n=1}^{l}(d_{t}\eta(t_{n+1}),d_{t}G_{\xi}^{n+1})\right].\no
\end{eqnarray}
	Using the Cauchy-Schwarz inequality, Young inequality and \reff{3.3}, we have
	\begin{align}
		&\dfrac{1}{\varDelta t}(d_{t}\eta(t_{l+1}),G_{\xi}^{l+1})\leq\dfrac{1}{\varDelta t}\left\|d_{t}\eta(t_{l+1}) \right\|_{L^{2}(\varOmega)}\left\|G_{\xi}^{l+1} \right\|_{L^{2}(\varOmega)}\label{3.70}\\
		&\leq\dfrac{1}{\varDelta t}\left\|\eta_{t} \right\|_{L^{2}((t_{l},t_{l+1});\varOmega)}\cdot\dfrac{1}{\beta_{1}}\sup_{\bv_{h}\in V_{h}}\left[ \dfrac{(\mathcal{N}(\nabla\bu(t_{l+1}))-\mathcal{N}(\nabla\mathbf{u}_{h}^{l+1}),\varepsilon(\bv_{h}))}{\left\|\nabla\bv_{h} \right\|_{L^{2}(\varOmega)}}\right.\nonumber\\
		&~~~~\left.-\dfrac{(F_{\xi}^{l+1},\nabla\cdot\bv_{h})}{\left\|\nabla\bv_{h} \right\|_{L^{2}(\varOmega)}}\right] \nonumber\\
		&\leq\dfrac{1}{\beta_{1}\varDelta t}\left\|\eta_{t} \right\|_{L^{2}((t_{l},t_{l+1});\varOmega)}\left[C_{3} \left\| \varepsilon(Z_{\bu}^{l+1})\right\|_{L^{2}(\varOmega)}+C_{3}\left\| \varepsilon(Y_{\bu}^{l+1})\right\|_{L^{2}(\varOmega)}\right.\no\\
		&~~~~\left.+c_{k}\left\| F_{\xi}^{l+1}\right\|_{L^{2}(\varOmega)} \right]\nonumber\\
		&\leq\dfrac{3}{\beta_{1}^{2}}\left\|\eta_{t} \right\|_{L^{2}((t_{l},t_{l+1});\varOmega)}^{2}+\dfrac{C_{3}^{2}}{4\varDelta t^{2}}\left\| \varepsilon(Z_{\bu}^{l+1})\right\|_{L^{2}(\varOmega)}^{2}+\dfrac{C_{3}^{2}}{4\varDelta t^{2}}\left\| \varepsilon(Y_{\bu}^{l+1})\right\|_{L^{2}(\varOmega)}^{2}\no\\
		&+\dfrac{c_{k}^{2}}{4\varDelta t^{2}}\left\| F_{\xi}^{l+1}\right\|_{L^{2}(\varOmega)}^{2},\nonumber
	\end{align}
	\begin{eqnarray}
		&\sum_{n=1}^{l}(d_{t}\eta(t_{n+1}),d_{t}G_{\xi}^{n+1})\leq\sum_{n=1}^{l}\left\| d_{t}\eta(t_{n+1})\right\|_{L^{2}(\varOmega)}\left\| d_{t}G_{\xi}^{n+1}\right\|_{L^{2}(\varOmega)}\label{3.71}\\
		&\leq\sum_{n=1}^{l}\left\| d_{t}\eta(t_{n+1})\right\|_{L^{2}(\varOmega)}\cdot\dfrac{1}{\beta_{1}}\sup_{\bv_{h}\in V_{h}}\left[ \dfrac{(d_{t}\mathcal{N}(\nabla\bu( t_{n+1}))-d_{t}\mathcal{N}(\nabla\mathbf{u}_{h}^{n+1}),\varepsilon(\bv_{h}))}{\left\|\nabla\bv_{h} \right\|_{L^{2}(\varOmega)}}\right.\nonumber\\
		&~~~~\left.-\dfrac{(d_{t}F_{\xi}^{n+1},\nabla\cdot\bv_{h})}{\left\|\nabla\bv_{h} \right\|_{L^{2}(\varOmega)}}\right] \nonumber\\
		&\leq\sum_{n=1}^{l}\dfrac{1}{\beta_{1}}\left\| d_{t}\eta(t_{n+1})\right\|_{L^{2}(\varOmega)}\left[C_{3} \left\| d_{t}\varepsilon(Z_{\bu}^{n+1})\right\|_{L^{2}(\varOmega)}+C_{3}\left\| d_{t}\varepsilon(Y_{\bu}^{n+1})\right\|_{L^{2}(\varOmega)}\right.\no\\
		&~~~~\left.+c_{k}\left\| d_{t}F_{\xi}^{n+1}\right\|_{L^{2}(\varOmega)} \right]\nonumber\\
		&\leq\dfrac{3}{\beta_{1}^{2}}\left\| \eta_{t}\right\|_{L^{2}((0,T);L^{2}(\varOmega))}^{2}+\sum_{n=1}^{l}\left[ \dfrac{C_{3}^{2}}{4}\left\| d_{t}\varepsilon(Z_{\bu}^{n+1})\right\|_{L^{2}(\varOmega)}^{2}+\dfrac{C_{3}^{2}}{4}\left\| d_{t}\varepsilon(Y_{\bu}^{n+1})\right\|_{L^{2}(\varOmega)}^{2}\right.\no\\
		&~~~~\left.+\dfrac{c_{k}^{2}}{4}\left\| d_{t}F_{\xi}^{n+1})\right\|_{L^{2}(\varOmega)}^{2}\right]. \nonumber
	\end{eqnarray}
The term of	$\Phi_{4}$ can be bounded by
	\begin{align}
		&\left| \varDelta t\sum_{n=1}^{l}(R_{h}^{n+\theta},\hat{Z}_{p}^{n+1})\right|\leq\varDelta t\sum_{n=1}^{l}\left\|R_{h}^{n+\theta} \right\|_{H^{1}(\varOmega)^{'}}\left\| \nabla\hat{Z}_{p}^{n+1}\right\|_{L^{2}(\varOmega)}\label{3.72}\\
		&\leq\varDelta t\sum_{n=1}^{l}\left[ \dfrac{K}{4\mu_{f}}\left\| \nabla\hat{Z}_{p}^{n+1}\right\|_{L^{2}(\varOmega)}^{2}+\dfrac{\mu_{f}}{K}\left\|R_{h}^{n+\theta} \right\|_{H^{1}(\varOmega)^{'}}^{2}\right] \nonumber\\
		&\leq\varDelta t\sum_{n=1}^{l}\left[ \dfrac{K}{4\mu_{f}}\left\| \nabla\hat{Z}_{p}^{n+1}\right\|_{L^{2}(\varOmega)}^{2}+\dfrac{\mu_{f}\varDelta t}{3K}\left\|\eta_{tt} \right\|_{L^{2}((t_{n},t_{n+1});H^{1}(\varOmega)^{'})}^{2} \right],\nonumber
	\end{align}
	where we used the fact that
	\begin{align*}
		\left\|R_{h}^{n+\theta} \right\|_{H^{1}(\varOmega)^{'}}^{2}\leq\dfrac{\varDelta  t}{3}\int_{t_{n}}^{t_{n+1}}\left\|\eta_{tt} \right\|_{H^{1}(\varOmega)^{'}}^{2}dt. 
	\end{align*}
	
	As for the term of $ \Phi_{5}$, using the Cauchy-Schwarz inequality, Young inequality and \reff{3.206}, we have
	\begin{align}
		&\varDelta t\sum_{n=1}^{l}(\mathcal{N}(\nabla\mathcal{R}_{h}\bu(t_{n+1}))-\mathcal{N}(\nabla(\bu(t_{n+1})),\varepsilon(Z_{\bu}^{n+1}))\label{3.73}\\
		&\leq\varDelta t\sum_{n=1}^{l}\left\| \mathcal{N}(\nabla\mathcal{R}_{h}\bu(t_{n+1}))-\mathcal{N}((\nabla\bu(t_{n+1}))\right\|_{L^{2}(\varOmega)}\left\| \varepsilon(Z_{\bu}^{n+1})\right\|_{L^{2}(\varOmega)}\nonumber\\
		&\leq\varDelta t\sum_{n=1}^{l}C_{3}\left\| \varepsilon(\mathcal{R}_{h}\boldsymbol{u}(t_{n+1}))-\varepsilon(\bu(t_{n+1})\right\|_{L^{2}(\varOmega)}\left\| \varepsilon(Z_{\bu}^{n+1})\right\|_{L^{2}(\varOmega)}\nonumber\\
		&\leq\varDelta t\sum_{n=1}^{l}\left[ \dfrac{C_{3}^{2}}{4}\left\| \varepsilon(\mathcal{R}_{h}\bu(t_{n+1}))-\varepsilon(\bu(t_{n+1})\right\|_{L^{2}(\varOmega)}^{2}+\left\| \varepsilon(Z_{\bu}^{n+1})\right\|_{L^{2}(\varOmega)}^{2}\right]\nonumber\\
		&=\varDelta t\sum_{n=1}^{l}\left[ \dfrac{C_{3}^{2}}{4}\left\| \varepsilon(Y_{\bu}^{n+1})\right\|_{L^{2}(\varOmega)}^{2}+\left\| \varepsilon(Z_{\bu}^{n+1})\right\|_{L^{2}(\varOmega)}^{2}\right].\nonumber
	\end{align}
As for $\Phi_{6}$, using the inverse inequality \reff{20210915}, Cauchy-Schwarz inequality, Young inequality and inf-sup condition, we have 
	\begin{align}
		&\Phi_{6}=(\varDelta t)^{2}\sum_{n=1}^{l}\dfrac{\kappa_{1}}{\mu_{f}}(Kd_{t}\nabla Z_{\xi}^{n+1},\nabla\hat{Z}_{p}^{n+1})\label{3.74}\\
		&\leq(\varDelta t)^{2}\sum_{n=1}^{l}\dfrac{c_{1}h^{-1}K\kappa_{1}}{\mu_{f}}\left\| d_{t}Z_{\xi}^{n+1}\right\|_{L^{2}(\varOmega)}\left\| \nabla\hat{Z}_{p}^{n+1}\right\|_{L^{2}(\varOmega)}\nonumber\\
		&\leq(\varDelta t)^{2}\sum_{n=1}^{l}\dfrac{c_{1}h^{-1}K\kappa_{1}}{\mu_{f}\beta_{1}}\sup_{\bv_{h}\in V_{h}}\left[ \dfrac{(d_{t}\mathcal{N}(\bu(t_{n+1}))-d_{t}\mathcal{N}(\nabla\mathbf{u}_{h}^{n+1}),\varepsilon(\bv_{h}))}{\left\|\nabla\bv_{h} \right\|_{L^{2}(\varOmega)}}\right.\no\\
		&~~~~\left.-\dfrac{(d_{t}Y_{\xi}^{n+1},\nabla\cdot\bv_{h})}{\left\|\nabla\bv_{h} \right\|_{L^{2}(\varOmega)}}\right]\left\| \nabla\hat{Z}_{p}^{n+1}\right\|_{L^{2}(\varOmega)}\nonumber\\
		&\leq(\varDelta t)^{2}\dfrac{K}{\mu_{f}}\sum_{n=1}^{l}\dfrac{\kappa_{1}c_{1}}{h\beta_{1}}\left[C_{3}\left\|d_{t}\varepsilon(Z_{\bu}^{n+1})\right\|_{L^{2}(\varOmega)}+C_{3}\left\| d_{t}\varepsilon(Y_{\bu}^{n+1})\right\|_{L^{2}(\varOmega)}\right.\nonumber\\
		&~~~~\left.+c_{k}\left\| d_{t}Y_{\xi}^{n+1}\right\|_{L^{2}(\varOmega)} \right]\left\| \nabla\hat{Z}_{p}^{n+1}\right\|_{L^{2}(\varOmega)} \nonumber\\
		&\leq(\varDelta t)^{2}\dfrac{K}{\mu_{f}}\sum_{n=1}^{l}\left[\dfrac{C_{3}^{2}\kappa_{1}^{2}c_{1}^{2}\varDelta t}{h^{2}\beta_{1}^{2}}\left\|d_{t}\varepsilon(Z_{\bu}^{n+1})\right\|_{L^{2}(\varOmega)}^{2}\right.\nonumber\\
		&~~~~+\dfrac{C_{3}^{2}\kappa_{1}^{2}c_{1}^{2}\varDelta t}{h^{2}\beta_{1}^{2}}\left\| d_{t}\varepsilon(Y_{\bu}^{n+1})\right\|_{L^{2}(\varOmega)}^{2}+\dfrac{\varDelta t\kappa_{1}^{2}c_{1}^{2}c_{k}^{2}}{h^{2}\beta_{1}^{2}}\left\| d_{t}Y_{\xi}^{n+1}\right\|_{L^{2}(\varOmega)}^{2}\no\\
		&~~~~\left.+\dfrac{3}{4\varDelta t}\left\| \nabla\hat{Z}_{p}^{n+1}\right\|_{L^{2}(\varOmega)}^{2}\right].\nonumber
	\end{align}
	Using the Cauchy-Schwarz inequality and Young inequality, we get
	\begin{align}
		&\Phi_{7}=\varDelta t\sum_{n=1}^{l}(d_{t}G_{\eta}^{n+\theta},Y_{p}^{n+1}-F_{p}^{n+1})\label{3.75}\\
		&\leq \varDelta t\sum_{n=1}^{l}\left\| d_{t}G_{\eta}^{n+\theta}\right\|_{L^{2}(\varOmega)}\left\| Y_{p}^{n+1}-F_{p}^{n+1}\right\|_{L^{2}(\varOmega)}\nonumber\\
		&\leq\varDelta t\sum_{n=1}^{l}\left\| d_{t}G_{\eta}^{n+\theta}\right\|_{L^{2}(\varOmega)}\left(\left\| F_{p}^{n+1}\right\|_{L^{2}(\varOmega)}+\left\| Y_{p}^{n+1}\right\|_{L^{2}(\varOmega)}\right)\nonumber\\
		& \leq\varDelta t\sum_{n=1}^{l}\left[2\left\| d_{t}G_{\eta}^{n+\theta}\right\|_{L^{2}(\varOmega)}^{2}+\dfrac{1}{4}\left\| F_{p}^{n+1}\right\|_{L^{2}(\varOmega)}^{2}+\dfrac{1}{4}\left\| Y_{p}^{n+1}\right\|_{L^{2}(\varOmega)}^{2}\right]. \nonumber
	\end{align}
	Substituting \reff{3.67}-\reff{3.75}  into \reff{3.66}, we have
	\begin{eqnarray}
		&&\qquad\dfrac{1}{2}\left[\kappa_{2}\left\|G_{\eta}^{l+\theta} \right\|_{L^{2}(\varOmega)}^{2}+\kappa_{3}\left\|G_{\xi}^{l+1} \right\|_{L^{2}(\varOmega)}^{2}  \right]\label{202193}\\
		&&~~+\varDelta t\sum_{n=1}^{l}C_{4}\left\|\varepsilon(Z_{\bu}^{n+1}) \right\|_{L^{2}(\varOmega)}^{2}+\varDelta t\sum_{n=1}^{l}\left[\dfrac{K}{\mu_{f}}(\nabla\hat{Z}_{p}^{n+1},\nabla\hat{Z}_{p}^{n+1})\right.\no\\
		&&~~\left.+\dfrac{\kappa_{2}\varDelta t}{2}\left\|d_{t}G_{\eta}^{n+\theta}\right\|_{L^{2}(\varOmega)}^{2}+\dfrac{\kappa_{3}\varDelta t}{2}\left\|d_{t}G_{\xi}^{n+1}\right\|_{L^{2}(\varOmega)}^{2}\right]\no\\
		&&\leq\dfrac{\varDelta t}{2}\sum_{n=1}^{l}\left[\left\|F_{\xi}^{n+1} \right\|_{L^{2}(\varOmega)}^{2}+c_{k}\left\|\varepsilon(Z_{\bu}^{n+1}) \right\|_{L^{2}(\varOmega)}^{2}+c_{k}\left\|d_{t}\varepsilon(Z_{\bu}^{n+1})\right\|_{L^{2}(\varOmega)}^{2}\right.\no\\
		&&~~\left.+\left\|G_{\xi}^{n+1} \right\|_{L^{2}(\varOmega)}^{2}\right]+\dfrac{\varDelta t}{2}\sum_{n=1}^{l}\left[\left\|G_{\xi}^{n+1} \right\|_{L^{2}(\varOmega)}^{2}+c_{k}\left\|\varepsilon(Z_{\bu}^{n+1} )\right\|_{L^{2}(\varOmega)}^{2}\right.\nonumber\\
		&&~~\left.+\left\|\nabla\cdot (d_{t}Y_{\bu}^{n+1}) \right\|_{L^{2}(\varOmega)}^{2}+\left\|G_{\xi}^{n+1} \right\|_{L^{2}(\varOmega)}^{2}\right]+\dfrac{3k_{1}(\varDelta t)^{2}}{\beta_{1}^{2}}\left\|\eta_{t} \right\|_{L^{2}((t_{l},t_{l+1});\varOmega)}^{2}\nonumber\\
		&&~~+\dfrac{C_{3}^{2}}{4}\left\| \varepsilon(Z_{\bu}^{l+1})\right\|_{L^{2}(\varOmega)}^{2}+\dfrac{C_{3}^{2}}{4}\left\| \varepsilon(Y_{\bu}^{l+1})\right\|_{L^{2}(\varOmega)}^{2}+\dfrac{c_{k}^{2}}{4}\left\| F_{\xi}^{l+1}\right\|_{L^{2}(\varOmega)}^{2}\no\\
		&&~~+\dfrac{3k_{1}(\varDelta t)^{2}}{\beta_{1}^{2}}\left\| \eta_{t}\right\|_{L^{2}((0,T);L^{2}(\varOmega))}^{2}+k_{1}(\varDelta t)^{2}\sum_{n=1}^{l}\left[ \dfrac{C_{3}^{2}}{4}\left\| d_{t}\varepsilon(Z_{\bu}^{n+1})\right\|_{L^{2}(\varOmega)}^{2}\right.\no\\
		&&~~\left.+\dfrac{C_{3}^{2}}{4}\left\| d_{t}\varepsilon(Y_{\bu}^{n+1})\right\|_{L^{2}(\varOmega)}^{2}+\dfrac{c_{k}^{2}}{4}\left\| d_{t}F_{\xi}^{n+1})\right\|_{L^{2}(\varOmega)}^{2}\right] \nonumber\\
		&&~~+\varDelta t\sum_{n=1}^{l}\left[ \dfrac{K}{4\mu_{f}}\left\| \nabla\hat{Z}_{p}^{n+1}\right\|_{L^{2}(\varOmega)}^{2}+\dfrac{\mu_{f}\varDelta t}{3K}\left\|\eta_{tt} \right\|_{L^{2}((t_{n},t_{n+1});H^{1}(\varOmega)^{'})}^{2} \right]\nonumber\\
		&&~~+\varDelta t\sum_{n=1}^{l}\left[ \dfrac{C_{3}^{2}}{4}\left\| \varepsilon(Y_{\bu}^{n+1})\right\|_{L^{2}(\varOmega)}^{2}+\left\| \varepsilon(Z_{\bu}^{n+1})\right\|_{L^{2}(\varOmega)}^{2}\right]\nonumber\\
		&&~~+(\varDelta t)^{2}\dfrac{K}{\mu_{f}}\sum_{n=1}^{l}\left[\dfrac{C_{3}^{2}\kappa_{1}^{2}c_{1}^{2}\varDelta t}{h^{2}\beta_{1}^{2}}\left\|d_{t}\varepsilon(Z_{\bu}^{n+1})\right\|_{L^{2}(\varOmega)}^{2}+\dfrac{3}{4\varDelta t}\left\| \nabla\hat{Z}_{p}^{n+1}\right\|_{L^{2}(\varOmega)}^{2}\right.\nonumber\\
		&&~~\left.+\dfrac{C_{3}^{2}\kappa_{1}^{2}c_{1}^{2}\varDelta t}{h^{2}\beta_{1}^{2}}\left\| d_{t}\varepsilon(Y_{\bu}^{n+1})\right\|_{L^{2}(\varOmega)}^{2}+\dfrac{\varDelta t\kappa_{1}^{2}c_{1}^{2}c_{k}^{2}}{h^{2}\beta_{1}^{2}}\left\| d_{t}Y_{\xi}^{n+1}\right\|_{L^{2}(\varOmega)}^{2}\right]\nonumber\\
		&& ~~+\varDelta t\sum_{n=1}^{l}\left[2\left\| d_{t}G_{\eta}^{n+\theta}\right\|_{L^{2}(\varOmega)}^{2}+\dfrac{1}{4}\left\| F_{p}^{n+1}\right\|_{L^{2}(\varOmega)}^{2}+\dfrac{1}{4}\left\| Y_{p}^{n+1}\right\|_{L^{2}(\varOmega)}^{2}\right]. \nonumber
	\end{eqnarray}
  Applying the discrete Gronwall inequality (cf. \cite{Shen1990}) to \reff{202193}, we obtain
	\begin{eqnarray}
		&&\quad C_{4}\left\|\varepsilon(Z_{\bu}^{l+1}) \right\|_{L^{2}(\varOmega)}^{2}+\kappa_{2}\left\|G_{\eta}^{l+\theta}\right\|_{L^{2}(\varOmega)}^{2}+\kappa_{3}\left\|G_{\xi}^{l+1} \right\|_{L^{2}(\varOmega)}^{2}\\
		&&~~+\varDelta t\sum_{n=1}^{l}\dfrac{K}{\mu_{f}}\left\|\nabla \hat{Z}_{p}^{n+1} \right\|_{L^{2}(\varOmega)}^{2}\no\\
		&&\leq\dfrac{\mu_{f}(\varDelta t)^{2}}{3K}\left\|\eta_{tt}\right\|_{L^{2}((0,T);H^{1}(\varOmega)^{'})}^{2}+\dfrac{6k_{1}(\varDelta t)^{2}}{\beta_{1}^{2}}\left\|\eta_{t} \right\|_{L^{2}((0,T);L^{2}(\varOmega))}^{2}\nonumber\\
		&&~~~~+\hat{C_{1}}\left\|F_{\xi}^{l+1}\right\|_{L^{2}(\varOmega)}^{2}+\varDelta t\sum_{n=1}^{l}\left\|d_{t}F_{\xi}^{n+1}\right\|_{L^{2}(\varOmega)}^{2}+\varDelta t\sum_{n=1}^{l}\left\|\nabla\cdot d_{t}Y_{\bu}^{n+1}\right\|_{L^{2}(\varOmega)}^{2}\nonumber\\
		&&~~+C_{3}\hat{C}_{1}\left\|\varepsilon(Y_{\bu}^{l+1})\right\|_{L^{2}(\varOmega)}^{2}+\varDelta t\sum_{n=1}^{l}\left\|F_{p}^{n+1}\right\|_{L^{2}(\varOmega)}^{2}+\varDelta t\sum_{n=1}^{l}\left\|Y_{p}^{n+1}\right\|_{L^{2}(\varOmega)}^{2}
		\nonumber\\
		&&\leq\hat{C}(\varDelta t)^{2}\left( \left\|\eta_{t} \right\|_{L^{2}((0,T);L^{2}(\varOmega))}^{2}+\left\|\eta_{tt}\right\|_{L^{2}((0,T);H^{1}(\varOmega)^{'})}^{2}\right) \nonumber\\
		&&~~+\hat{C}h^{4}\left[\left\|\xi_{t} \right\|_{L^{2}((0,T);H^{2}(\varOmega))}^{2}+\left\|\xi \right\|_{L^{\infty}((0,T);H^{2}(\varOmega))}^{2}+\left\|\bu \right\|_{L^{2}((0,T);H^{3}(\varOmega))}^{2}\right.\no\\
		&&~~\left.+\left\|\nabla\cdot\bu_{t} \right\|_{L^{2}((0,T);H^{2}(\varOmega))}^{2}\right] \nonumber 
	\end{eqnarray}
	provided that $ \varDelta t\leq\dfrac{h^{2}\beta_{1}^{2}\mu_{f}C_{4}}{4K\kappa_{1}^{2}c_{1}^{2}C_{3}^{2}} $ when $ \theta=0 $ or $ \varDelta t \geq0 $ when $\theta=1$. Hence, we  deduce that \reff{3.63} holds. The proof is complete.
\end{proof}
\begin{theorem}
	The solution of the (MFEA) satisfies the following error estimates:
	\begin{align}
		&\max_{0\leq n\leq N}\left[\sqrt{C_{4}}\left\|\nabla(\bu(t_{n+1})-\bu_{h}^{n+1})\right\|_{L^{2}(\varOmega )}+\sqrt\kappa_{2}\left\|\eta(t_{n+1})-\eta_{h}^{n+1}\right\|_{L^{2}(\varOmega )}\right.\\
		&\left.+\sqrt{\kappa_{3}}\left\|\xi(t_{n+1})-\xi_{h}^{n+1}\right\|_{L^{2}(\varOmega )} \right]\leq\check{C}_{1}(T)\varDelta t+\check{C}_{2}(T)h^{2},\nonumber\\
		&\left(\varDelta t\sum_{n=0}^{N}\dfrac{K}{\mu_{f}}\left\|\nabla p(t_{n+1})-\nabla p_{h}^{n+1}\right\|_{L^{2}(\varOmega )}^{2} \right)^{\frac{1}{2}}\leq \check{C}_{1}(T)\varDelta t+\check{C}_{2}(T)h
	\end{align}
	provided that $ \varDelta t=O(h^{2}) $ when $ \theta=0 $ and $ \varDelta t\geq0 $ when $ \theta=1$. Here
	$\check{C}_{1}(T)=\hat{C}_{1}(T)$, $
		\check{C}_{2}(T)=\hat{C}_{2}(T)+\left\|\xi \right\|_{L^{\infty}((0,T);H^{2}(\varOmega))}+\left\|\eta \right\|_{L^{\infty}((0,T);H^{2}(\varOmega))}+\left\|\nabla\boldsymbol{u} \right\|_{L^{\infty}((0,T);H^{2}(\varOmega))}.
	$
\end{theorem}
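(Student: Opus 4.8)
The plan is to obtain both estimates from Theorem \ref{2020} by combining it with the triangle inequality and the approximation properties \reff{3.41}--\reff{3.46} of the projections $\mathcal{Q}_h,\mathcal{S}_h,\mathcal{R}_h$. Using the error splittings introduced earlier in this section, write $E_{\bu}^{n+1}=Y_{\bu}^{n+1}+Z_{\bu}^{n+1}$, $E_{\xi}^{n+1}=F_{\xi}^{n+1}+G_{\xi}^{n+1}$ and $E_{\eta}^{n+1}=F_{\eta}^{n+1}+G_{\eta}^{n+1}$. Since $Z_{\bu}^{n+1}\in\bV_h$ is orthogonal to $\bRM$, Korn's inequality yields $\|\nabla Z_{\bu}^{n+1}\|_{L^{2}(\Omega)}\le C\|\varepsilon(Z_{\bu}^{n+1})\|_{L^{2}(\Omega)}$, so the $Z$- and $G$-parts of $E_{\bu},E_{\xi},E_{\eta}$ are all controlled by $\hat C_1(T)\varDelta t+\hat C_2(T)h^{2}$ by \reff{3.63} (for $\theta=0$ one first relabels $n\mapsto n+1$ in the maximum so that $G_{\eta}^{n+1},G_{\xi}^{n+1}$ are captured, which is legitimate since the left-hand side of \reff{3.63} bounds the maximum of each summand separately). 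On the other hand, with the choice $k=2$ for the displacement space and $k=1$ for the $\xi,\eta$-spaces — i.e. the $P_2$--$P_1$--$P_1$ pair — \reff{3.46} and \reff{3.41} give $\|\nabla Y_{\bu}^{n+1}\|_{L^{2}(\Omega)}\le Ch^{2}\|\bu(t_{n+1})\|_{H^{3}(\Omega)}$, $\|F_{\xi}^{n+1}\|_{L^{2}(\Omega)}\le Ch^{2}\|\xi(t_{n+1})\|_{H^{2}(\Omega)}$ and $\|F_{\eta}^{n+1}\|_{L^{2}(\Omega)}\le Ch^{2}\|\eta(t_{n+1})\|_{H^{2}(\Omega)}$. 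Adding these bounds and using $\sqrt{a+b}\le\sqrt a+\sqrt b$ gives the first inequality, the extra projection norms being absorbed into $\check C_2(T)$.

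For the pressure-gradient estimate I would use the auxiliary quantity $\hat Z_p^{n+1}=F_p^{n+1}-Y_p^{n+1}+\kappa_1 G_\xi^{n+1}+\kappa_2 G_\eta^{n+\theta}$ already appearing in the lemma preceding Theorem \ref{2020}. Exploiting the linearity of $\mathcal{Q}_h$ together with $p(t_{n+1})=\kappa_1\xi(t_{n+1})+\kappa_2\eta(t_{n+1})$ and $p_h^{n+1}=\kappa_1\xi_h^{n+1}+\kappa_2\eta_h^{n+\theta}$, one checks the identity $\hat Z_p^{n+1}=\mathcal{S}_h p(t_{n+1})-p_h^{n+1}$ when $\theta=1$, so that $E_p^{n+1}=Y_p^{n+1}+\hat Z_p^{n+1}$; when $\theta=0$ there is in addition the single term $\kappa_2\mathcal{Q}_h\bigl(\eta(t_{n+1})-\eta(t_n)\bigr)=\kappa_2\mathcal{Q}_h\int_{t_n}^{t_{n+1}}\eta_t\,dt$, which is $O(\varDelta t)$ in $H^1(\Omega)$ by $H^1$-stability of the $L^2$-projection on a quasi-uniform mesh. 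Hence $\|\nabla E_p^{n+1}\|_{L^{2}(\Omega)}\le\|\nabla Y_p^{n+1}\|_{L^{2}(\Omega)}+\|\nabla\hat Z_p^{n+1}\|_{L^{2}(\Omega)}+C\varDelta t$; squaring, multiplying by $\varDelta t$ and summing over $n$, the $\hat Z_p$ contribution is controlled by the second term on the left of \reff{3.63}, while \reff{3.44} with $k=1$ gives $\|\nabla Y_p^{n+1}\|_{L^{2}(\Omega)}\le Ch\|p(t_{n+1})\|_{H^{2}(\Omega)}$ — only a single power of $h$, which is exactly why the pressure gradient converges at rate $O(h)$ rather than $O(h^{2})$. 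This yields the second inequality.

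The essential analytic work — the discrete energy identity, the inverse-inequality and inf-sup estimates of $\Phi_1,\dots,\Phi_7$, and the discrete Gronwall step — has already been carried out in Theorem \ref{2020}, so the only mild obstacles remaining are: (i) making the decomposition $E_p^{n+1}=Y_p^{n+1}+\hat Z_p^{n+1}$ (up to an $O(\varDelta t)$ consistency term when $\theta=0$) precise, and (ii) keeping track of which steps require the CFL-type restriction $\varDelta t=O(h^{2})$ — only the case $\theta=0$, inherited from Theorem \ref{2020} — versus none at all for $\theta=1$. Finally I would record that the time-regularity norms $\|\eta_t\|_{L^{2}(L^{2})}$, $\|\eta_{tt}\|_{L^{2}((H^{1})')}$, $\|\xi_t\|_{L^{2}(H^{2})}$ and the remaining quantities entering $\check C_1(T)$ and $\check C_2(T)$ are finite by Lemma \ref{smooth}, so the estimates are non-vacuous.
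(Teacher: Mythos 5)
Your proposal is correct and follows essentially the same route as the paper: triangle inequality on the splittings $E_{\bu}^{n}=Y_{\bu}^{n}+Z_{\bu}^{n}$, $E_{\xi}^{n}=F_{\xi}^{n}+G_{\xi}^{n}$, $E_{\eta}^{n}=F_{\eta}^{n}+G_{\eta}^{n}$, combined with the projection estimates \reff{3.41}, \reff{3.44}, \reff{3.46} and Theorem \ref{2020}. In fact you are more careful than the paper's one-line proof on the pressure term, where the identity $\hat Z_p^{n+1}=\mathcal{S}_h p(t_{n+1})-p_h^{n+1}+\kappa_2\mathcal{Q}_h\bigl(\eta(t_{n+\theta})-\eta(t_{n+1})\bigr)$ is needed to pass from the $\nabla\hat Z_p^{n+1}$ bound of Theorem \ref{2020} to the stated bound on $\nabla E_p^{n+1}$ — a detail the paper glosses over.
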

\begin{proof}
	The above estimates follow immediately from an application of the triangle inequality on
	\begin{align*}
		&\bu(t_{n})-\bu_{h}^{n}=Y_{\bu}^{n}+Z_{\bu}^{n},~~~~~~~~~~~~~~~~~~~~~\xi(t_{n})-\xi_{h}^{n}=Y_{\xi}^{n}+Z_{\xi}^{n}=F_{\xi}^{n}+G_{\xi}^{n},\\
		&\eta(t_{n})-\eta_{h}^{n}=Y_{\eta}^{n}+Z_{\eta}^{n}=F_{\eta}^{n}+G_{\eta}^{n},~~~~~~~p(t_{n})-p_{h}^{n}=Y_{p}^{n}+Z_{p}^{n}=F_{p}^{n}+G_{p}^{n}.
	\end{align*}
	and appealing to \reff{3.41}, \reff{3.44}, \reff{3.46} and Theorem \ref{2020}. The proof is complete.
\end{proof} 

\section{Numerical tests}\label{sec-4}
~

\medskip
{\bf Test 1.} Let $\Omega=[0,1]\times [0,1]$, $\Gamma_1=\{(1,x_2); 0\leq x_2\leq 1\}$,
$\Gamma_2=\{(x_1,0); 0\leq x_1\leq 1\}$, $\Gamma_3=\{(0,x_2); 0\leq x_2\leq 1\}$,
$\Gamma_4=\{(x_1,1); 0\leq x_1\leq 1\}$, and $T=1$. The source functions are as follows:
\begin{align*}
	\mathbf{f} &=-(\lambda+\mu) t(1,1)^T-2(\mu+\lambda) t^{2}(x_{1},x_{2})^{T}+\alpha te^{x_1+x_2}(1,1)^T,\\
	\phi &=c_0e^{x_1+x_2}-\frac{2K}{\mu_f}te^{x_1+x_2}+\alpha(x_1+x_2),
\end{align*}
and the boundary and initial conditions are 
\begin{alignat*}{2}
	p &= te^{x_1+x_2}  &&\qquad\mbox{on }\partial\Omega_T,\\
	u_1 &= \frac12 x_1^2t &&\qquad\mbox{on }\Gamma_j\times (0,T),\, j=1,3,\\
	u_2 &= \frac12 x_2^2t &&\qquad\mbox{on }\Gamma_j\times (0,T),\, j=,2,4,\\
	\sigma\bf{n}-\alpha \emph{p}\bf{n} &= \mathbf{f}_1 &&\qquad \mbox{on } \p\Ome_T,\\
	\mathbf{u}(x,0) = \mathbf{0},  \quad p(x,0) &=0 &&\qquad\mbox{in } \Ome,
\end{alignat*}
where
\begin{align*}
	\mathbf{f}_1(x,t)= \lambda(x_1+x_2) (n_1,n_2)^T t +\mu t(x_{1}n_{1},x_{2}n_{2})^{T}+\mu t^{2}\bigl(x_{1}^{2}n_{1},x_{2}^{2}n_{2}\bigr)^{T}\\
	+\lambda t^{2}(x_{1}^{2}+x_{2}^{2})(n_1,n_2)^T-\alpha(n_1,n_2)^Tt e^{x_1+x_2}.
\end{align*}
The exact solution of this problem is
\[
\mathbf{u}(x,t)=\frac{t}2 \bigl( x_1^2, x_2^2 \bigr)^T,\quad p(x,t)=te^{x_1+x_2}.
\]
\begin{table}[H]
	\begin{center}
		\caption{Values of parameters} \label{tab101}
		\begin{tabular}{l c c }
			\hline
			Parameters &Description  &Values  \\ \hline
			$\nu $ &  Poisson ratio   & 0.25      \\ 
			$\alpha$ &  Biot-Willis constant   & 1e-5   \\ 
			$E$ &  Young's modulus   & 0.25 \\ 
			$\lambda$ &  Lam$\acute{e}$ constant   & 0.1  \\
			$ K $ & Permeability tensor  & (1e-3) $\bf I$ \\
			$ \mu $ &  Lam$\acute{e}$ constant  & 0.1\\
			$ c_{0} $ &  Constrained specific storage coefficient  & 2\\\hline	
		\end{tabular}
	\end{center}
\end{table}
\begin{table}[!htbp]
	\begin{center}
		\caption{Spatial errors and convergence rates of $\mathbf{u}$} \label{tab1}
		\begin{tabular}{l c c c c }
			\hline
			$h$ & $\|\mathbf{u}-\mathbf{u}_h\|_{L^2}$ &CR &$ \|\mathbf{u}-\mathbf{u}_h\|_{H^1}$ & CR\\ \hline
			$h=1/3$ & 1.5015e-6  &    & 1.9107e-5       &     \\ 
			$h=1/6$ & 2.3515e-7  & 2.675  & 3.3561e-6     &  2.509     \\ 
			$h=1/12$ & 6.3182e-8  & 1.896  & 7.2141e-7      & 2.218      \\ 
			$h=1/24$ & 1.4724e-8  & 2.101   & 1.976e-7     &  1.868      \\ \hline
		\end{tabular}
	\end{center}
\end{table}
\begin{table}[!htbp]
	\begin{center}
		\caption{Spatial errors and convergence rates of $p$} \label{tab106}
		\begin{tabular}{l c c c c }
			\hline
			$h$ & $\|{p}-{p}_h\|_{L^2}$ &CR &$ \|{p}-{p}_h\|_{H^1}$ & CR\\ \hline
			$h=1/3$ & 0.032683 &    & 0.43705      &     \\ 
			$h=1/6$ & 0.008109  & 2.011  & 0.21766    &  1.005     \\ 
			$h=1/12$ & 0.002016  & 2.008  & 0.10872      &  1.002     \\ 
			$h=1/24$ & 0.000497  & 2.022   & 0.05435     &  1.0004      \\ \hline
		\end{tabular}
	\end{center}
\end{table}
\begin{figure}[H]
	\centering
	\includegraphics[height=2.2in,width=3in]{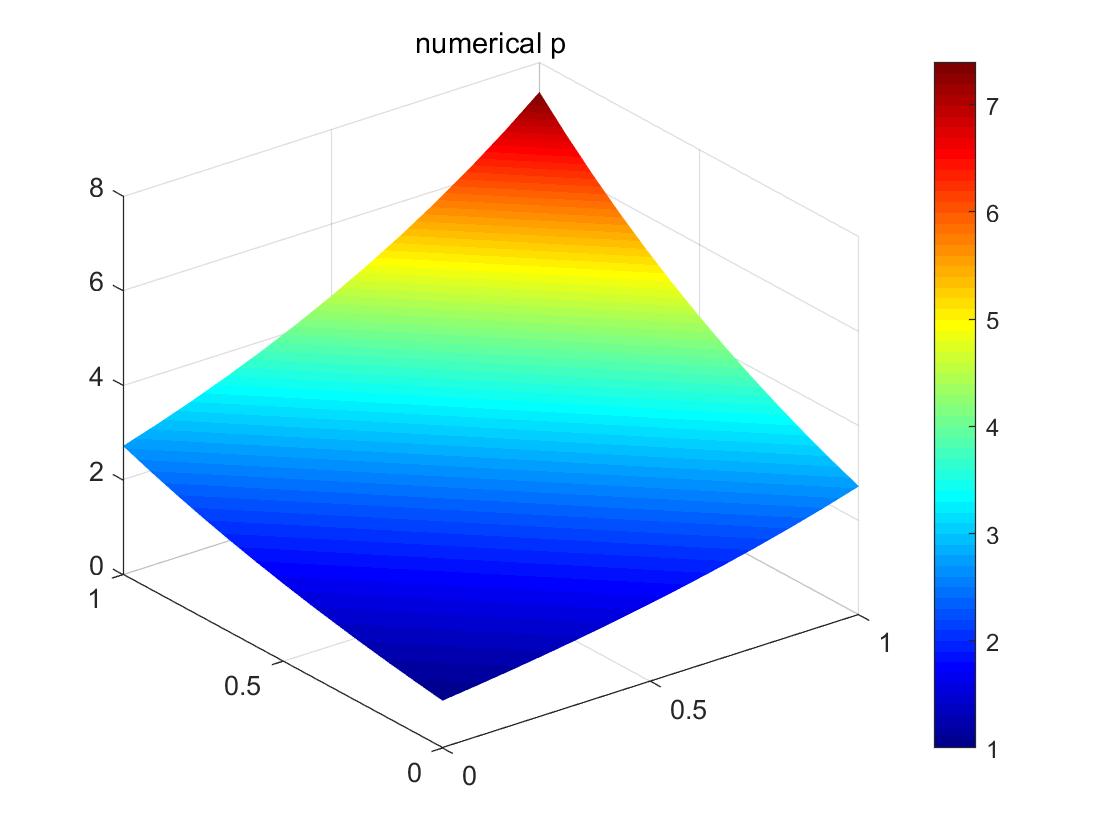}
	\caption{The numerical  pressure\ $p_{h}^{n+1}$ at the terminal time $T$ with the parameters of Table \ref{tab101}.}\label{figure_p1}
\end{figure}
\begin{figure}[H]
	\centering
	\includegraphics[height=2.2in,width=3in]{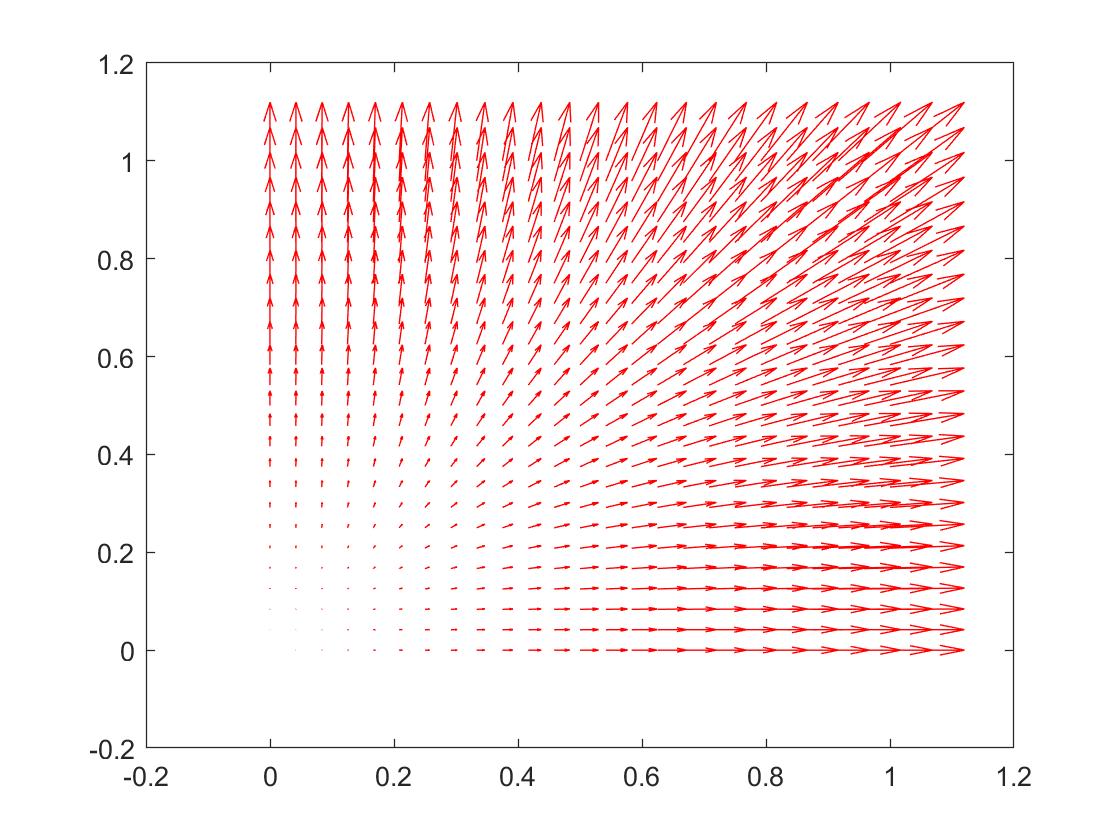}
	\caption{ Arrow plot of the computed displacement $ \bf u $ with the parameters of Table \ref{tab101}.}\label{figure_u_1}
\end{figure}	
\begin{table}[H]
	\begin{center}
		\caption{Values of parameters} \label{tab107}
		\begin{tabular}{l c c }
			\hline
			Parameters &Description  & Values  \\ \hline
			$\nu $ &  Poisson ratio   & 0.25      \\ 
			$\alpha$ &  Biot-Willis constant   & 1e-5   \\ 
			$E$ &  Young's modulus   & 2500 \\ 
			$\lambda$ &  Lam$\acute{e}$ constant   & 1e3  \\
			$ K $ &  Permeability tensor  & (1e-3) $ \bf I$ \\
			$ \mu $ &  Lam$\acute{e}$ constant  & 1e3\\
			$ c_{0} $ &  Constrained specific storage coefficient  & 1\\\hline	
		\end{tabular}
	\end{center}
\end{table}
\begin{table}[H]
	\begin{center}
		\caption{Spatial errors and convergence rates of $\mathbf{u}$} \label{tab7}
		\begin{tabular}{l c c c c }
			\hline
			$h$ & $\|\mathbf{u}-\mathbf{u}_h\|_{L^2}$ &CR &$ \|\mathbf{u}-\mathbf{u}_h\|_{H^1}$ & CR\\ \hline
			$h=1/3$ & 1.2573e-6  &    & 1.8951e-5       &     \\ 
			$h=1/6$ & 1.2922e-7  & 3.2824  & 3.2945-6     &  2.5241     \\ 
			$h=1/12$ & 2.7945e-8  & 2.2092  & 6.9895e-7      & 2.2368      \\ 
			$h=1/24$ & 3.2025-9  & 3.1253   & 1.9216e-7     &  1.8629      \\ \hline
		\end{tabular}
	\end{center}
\end{table}
\begin{table}[H]
	\begin{center}
		\caption{Spatial errors and convergence rates of $p$} \label{tab108}
		\begin{tabular}{l c c c c }
			\hline
			$h$ & $\|{p}-{p}_h\|_{L^2}$ &CR &$ \|{p}-{p}_h\|_{H^1}$ & CR\\ \hline
			$h=1/3$ & 0.024431 &    & 0.45261      &     \\ 
			$h=1/6$ & 0.0049599  & 2.3003  & 0.22246    &  1.0247     \\ 
			$h=1/12$ & 0.0010675  & 2.2161  & 0.10974      &  1.0195     \\ 
			$h=1/24$ & 0.00024727  & 2.1101   & 0.054506     &  1.0096      \\ \hline
		\end{tabular}
	\end{center}
\end{table}	
\begin{figure}[H]
	\centering
	\includegraphics[height=2.2in,width=3in]{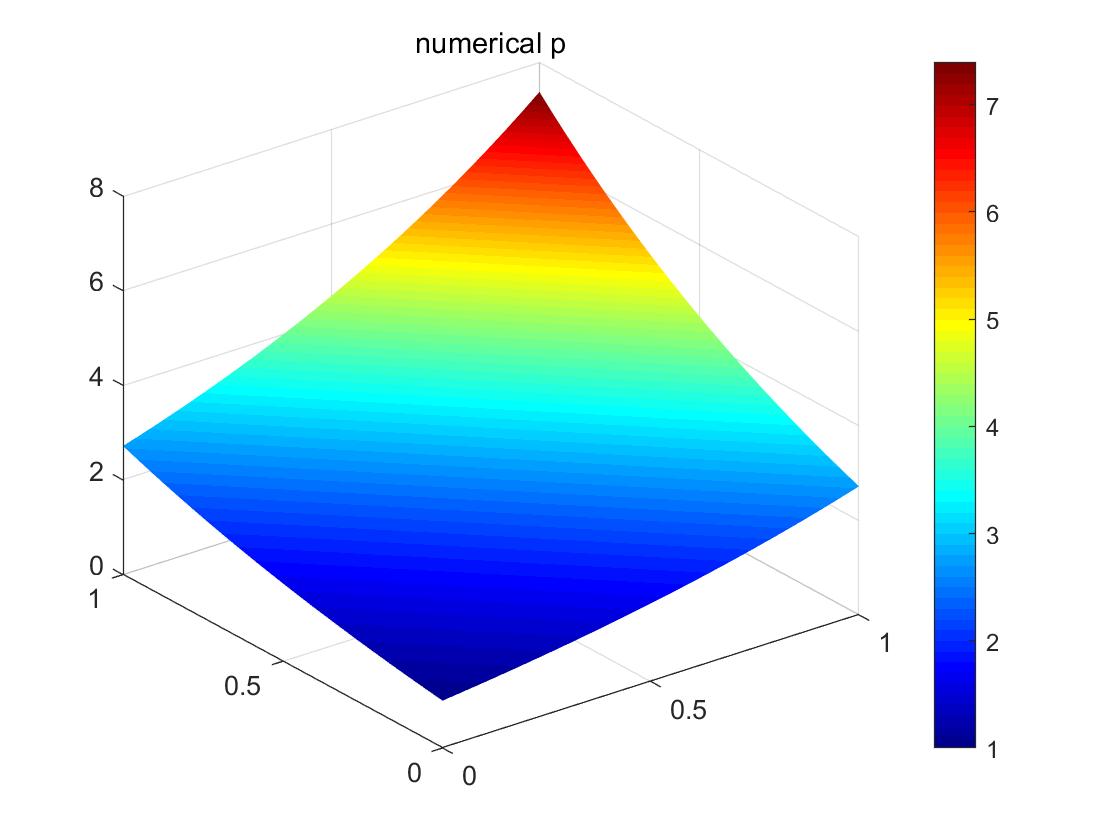}
	\caption{The numerical  pressure\ $p_{h}^{n+1}$ at the terminal time $T$ with the parameters of Table \ref{tab107}.}\label{figure_p5}
\end{figure}
\begin{figure}[H]
	\centering
	\includegraphics[height=2.2in,width=3in]{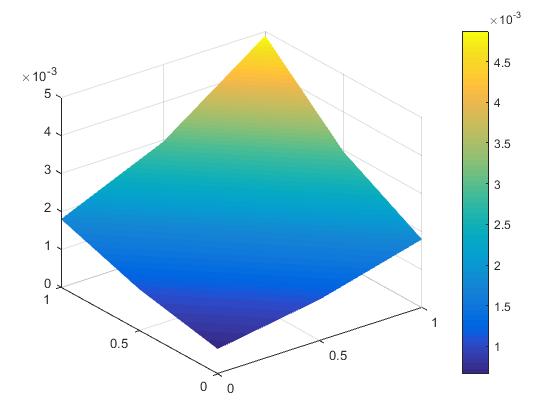}
	\caption{ Exact solution of pressure $p$ at the terminal time\ $T$ of Test 1.}\label{figure_p6}
\end{figure}
\begin{figure}[H]
	\centering
	\includegraphics[height=2.2in,width=3in]{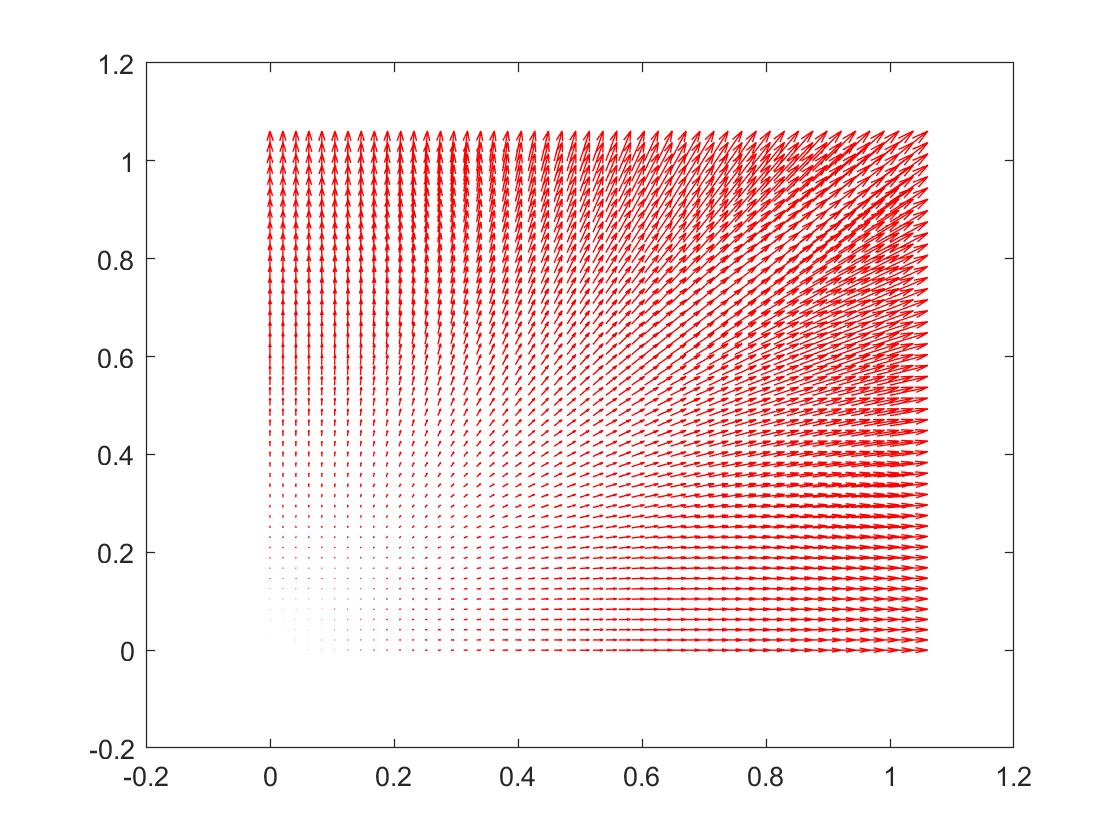}
	\caption{ Arrow plot of the computed displacement $ \bf u $ with the parameters of Table \ref{tab107}.}\label{figure_u_3}
\end{figure}
Table \ref{tab1} and Table \ref{tab106} display the error of displacement\ $\mathbf{u}$ and the pressure $p$ with $L^2(\Omega)$-norm and $H^1(\Omega)$-norm at the terminal time $T$ with the parameters of Table  \ref{tab101}, which are consistent with the theoretical result. Table \ref{tab7} and Table \ref{tab108} display the error of displacement\ $\mathbf{u}$ and the pressure $p$ with $L^2(\Omega)$-norm and $H^1(\Omega)$-norm at the terminal time $T$ with the parameters of Table \ref{tab107}. It is easy to find that there is  no ``locking phenomenon". 

Figure  \ref{figure_p1} and Figure \ref{figure_p5} show the numerical solution of pressure $p_{h}^{n+1}$ at the terminal time $T$ due to the difference of parameters between Table \ref{tab101} and Table \ref{tab107}, Figure \ref{figure_p6} shows the analytical solution of pressure $p_{h}^{n+1}$ at the terminal time $T$. Figure  \ref{figure_u_1} and Figure \ref{figure_u_3} show the arrow plot of the computed displacement $ \bf u $ corresponding to the parameters of Table \ref{tab101} and Table \ref{tab107}, respectively.

\medskip
{\bf Test 2.} Let $\Omega=[0,1]\times [0,1]$, $\Gamma_1=\{(1,x_2); 0\leq x_2\leq 1\}$,
$\Gamma_2=\{(x_1,0); 0\leq x_1\leq 1\}$,  $\Gamma_3=\{(0,x_2); 0\leq x_2\leq 1\}$,
$\Gamma_4=\{(x_1,1); 0\leq x_1\leq 1\}$, and $T=1$. The source functions are as follows:
\begin{align*}
	\mathbf{f} &=-(\lambda+\mu) t^2(1,1)^T-2(\mu+\lambda) t^{4}(x_{1},x_{2})^{T}+\alpha \cos(x_1+x_2)e^t(1,1)^T,\\
	\phi &=\Bigl(c_0+\frac{2K}{\mu_f} \Bigr)\sin(x_1+x_2)e^t+2t\alpha(x_1+x_2).
\end{align*}
and the boundary and initial conditions are as follows:
\begin{alignat*}{2}
	p &= \sin(x_1+x_2)e^t  &&\qquad\mbox{on }\partial\Omega_T,\\
	u_1 &= \frac12 x_1^2t^2 &&\qquad\mbox{on }\Gamma_j\times (0,T),\, j=1,3,\\
	u_2 &= \frac12 x_2^2t^2 &&\qquad\mbox{on }\Gamma_j\times (0,T),\, j=,2,4,\\
	\sigma\bf{n}-\alpha \emph{p}\bf{n} &= \mathbf{f}_1, &&\qquad \mbox{on } \p\Ome_T,\\
	\mathbf{u}(x,0) = \mathbf{0},  \quad p(x,0) &=\sin(x_1+x_2) &&\qquad\mbox{in } \Ome.
\end{alignat*}
where
\begin{align*}
	\mathbf{f}_1(x,t)= \lambda(x_1+x_2) (n_1,n_2)^T t^2 +\mu t^2(x_{1}n_{1},x_{2}n_{2})^{T}+\mu t^{4}\bigl(x_{1}^{2}n_{1},x_{2}^{2}n_{2}\bigr)^{T}\\
	+\lambda t^{4}(x_{1}^{2}+x_{2}^{2})(n_1,n_2)^T-\alpha\sin(x_1+x_2)(n_1,n_2)^T e^t.
\end{align*}
The exact solution of this problem is
\[
\mathbf{u}(x,t)=\frac{t^2}2 \bigl( x_1^2, x_2^2 \bigr)^T,\qquad p(x,t)=\sin(x_1+x_2)e^t.
\]
\begin{table}[!htbp]
	\begin{center}
		\caption{Values of parameters} \label{tab103}
		\begin{tabular}{l c c }
			\hline
			Parameters &Description  &Values  \\ \hline
			$\nu $ &  Poisson ratio   & 0.00495      \\ 
			$\alpha$ &  Biot-Willis constant   & 1e-4   \\ 
			$E$ &  Young's modulus   & 20.099 \\ 
			$\lambda$ &  Lam$\acute{e}$ constant   & 0.1  \\
			$ K $ &  Permeability tensor  & 0.1 {\bf I} \\
			$ \mu $ & Lam$\acute{e}$ constant  & 10\\
			$ c_{0} $ &  Constrained specific storage coefficient  & 20\\ \hline	
		\end{tabular}
	\end{center}
\end{table}
\begin{table}[!htbp]
	\begin{center}
		\caption{Spatial errors and convergence rates of $\mathbf{u}$} \label{tab104}
	\begin{tabular}{l c c c c }
		\hline
		$h$ & $\|\mathbf{u}-\mathbf{u}_h\|_{L^2}$ &CR &$ \|\mathbf{u}-\mathbf{u}_h\|_{H^1}$ & CR\\ \hline
		$h=1/3$ & 1.1698e-7  &    & 4.0858e-7       &     \\ 
		$h=1/6$ & 3.0566e-8  & 1.9363  & 1.0727e-7     &  1.9294     \\ 
		$h=1/12$ & 7.7309e-9  & 1.9832  & 2.7139e-8      &  1.9828     \\ 
		$h=1/24$ & 1.9451e-9  & 1.9908   & 6.8316e-9     &  1.9901      \\ \hline
		\end{tabular}
	\end{center}
\end{table}
\begin{table}[!htbp]
	\begin{center}
		\caption{Spatial errors and convergence rates of $p$} \label{tab105}
		\begin{tabular}{l c c c c }
			\hline
			$h$ & $\|p-p_h\|_{L^2}$ &CR &$ \|p-p_h\|_{H^1}$ & CR\\ \hline
			$h=1/3$ & 0.021791  &    & 0.29681      &     \\ 
			$h=1/6$ & 0.005449  & 1.9997  & 0.14871     &  0.9970     \\ 
			$h=1/12$ & 0.001368  & 1.9939  & 0.07439      & 0.9993      \\ 
			$h=1/24$ & 0.000349  & 1.9708   & 0.03720     & 0.9998       \\ \hline
		\end{tabular}
	\end{center}
\end{table}
\begin{figure}[!htbp]
	\centering
	\includegraphics[height=2.2in,width=3in]{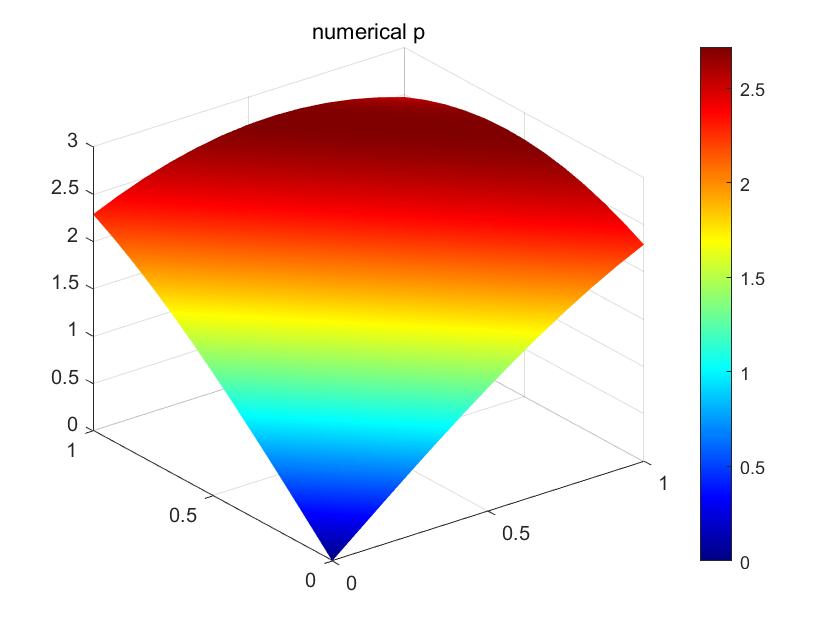}
	\caption{The numerical  pressure\ $p_{h}^{n+1}$ at the terminal time $T$ with the parameters of Table \ref{tab103}.}\label{figure_p3}
\end{figure}
\begin{figure}[!htbp]
	\centering
	\includegraphics[height=2.2in,width=3in]{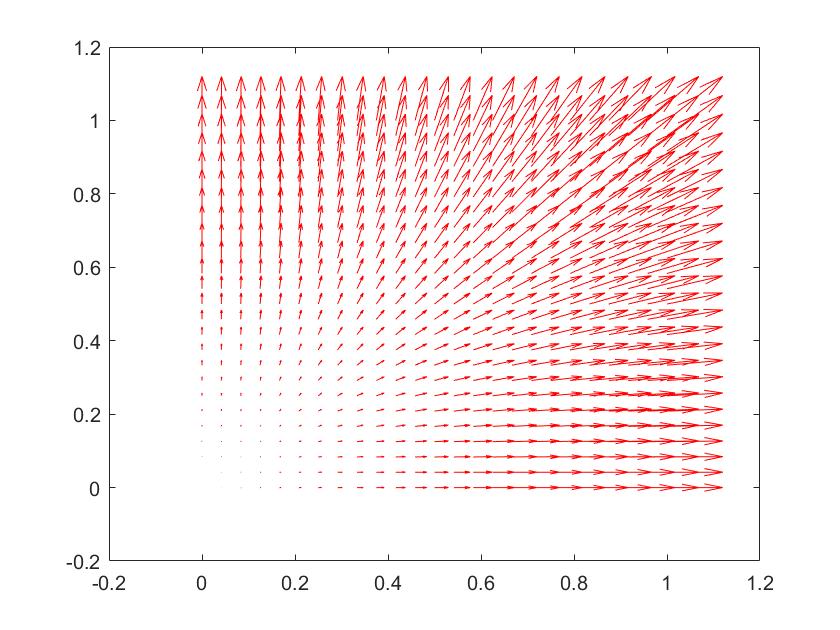}
	\caption{ Arrow plot of the computed displacement $ \bf u$ with the parameters of Table \ref{tab103}.}\label{figure_u_2}
\end{figure}
\begin{table}[!htbp]
	\begin{center}
		\caption{Values of parameters} \label{tab109}
		\begin{tabular}{l c c }
			\hline
			Parameters & Description  & Values  \\ \hline
			$\nu $ &  Poisson~ ratio   & 0.25      \\ 
			$\alpha$ &  Biot-Willis~ constant   & 1e-4   \\ 
			$E$ &  Young's~ modulus   & 2500 \\ 
			$\lambda$ &  Lam$\acute{e}$ constant   & 1e3  \\
			$ K $ &  Permeability tensor  & (0.1) $ \bf I$ \\
			$ \mu $ &  Lam$\acute{e}$ constant  & 1e3\\
			$ c_{0} $ &  Constrained specific storage coefficient  & 0.01\\\hline	
		\end{tabular}
	\end{center}
\end{table}
\begin{table}[!htbp]
	\begin{center}
		\caption{Spatial errors and convergence rates of $\mathbf{u}$} \label{tab1010}
		\begin{tabular}{l c c c c }
			\hline
			$h$ & $\|\mathbf{u}-\mathbf{u}_h\|_{L^2}$ &CR &$ \|\mathbf{u}-\mathbf{u}_h\|_{H^1}$ & CR\\ \hline
			$h=1/3$ & 1.2573e-6  &    & 1.8951e-5       &     \\ 
			$h=1/6$ & 1.2917e-7  & 3.2830  & 3.2945e-7     &  2.5241     \\ 
			$h=1/12$ & 2.7942e-8  & 2.2088  & 6.9895e-7      &  2.2368     \\ 
			$h=1/24$ & 3.2000e-9  & 3.1263   & 1.9216e-7     &  1.8629      \\ \hline
		\end{tabular}
	\end{center}
\end{table}
\begin{table}[!htbp]
	\begin{center}
		\caption{Spatial errors and convergence rates of $p$} \label{tab1011}
		\begin{tabular}{l c c c c }
			\hline
			$h$ & $\|p-p_h\|_{L^2}$ &CR &$ \|p-p_h\|_{H^1}$ & CR\\ \hline
			$h=1/3$ & 0.0218  &    & 0.2968      &     \\ 
			$h=1/6$ & 0.0054  & 2.0133  & 0.1487     &  0.9971     \\ 
			$h=1/12$ & 0.0014  & 1.9475  & 0.0744      & 0.9990      \\ 
			$h=1/24$ & 0.00033949  & 2.0440   & 0.0372     & 1       \\ \hline
		\end{tabular}
	\end{center}
\end{table}
\begin{figure}[H]
	\centering
	\includegraphics[height=2.2in,width=3in]{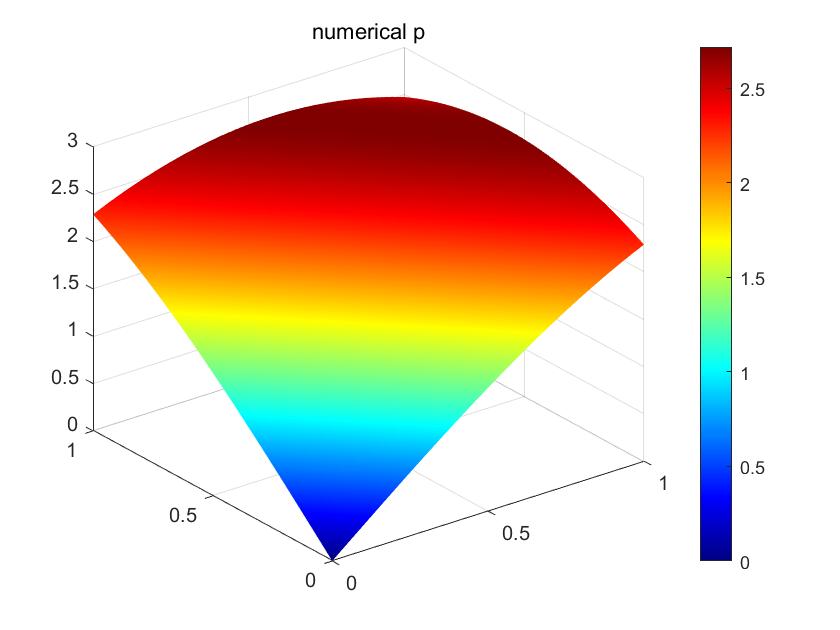}
	\caption{The numerical  pressure\ $p_{h}^{n+1}$ at the terminal time $T$ with the parameters of Table \ref{tab109}.}\label{figure_p7}
\end{figure}
\begin{figure}[H]
	\centering
	\includegraphics[height=2.2in,width=3in]{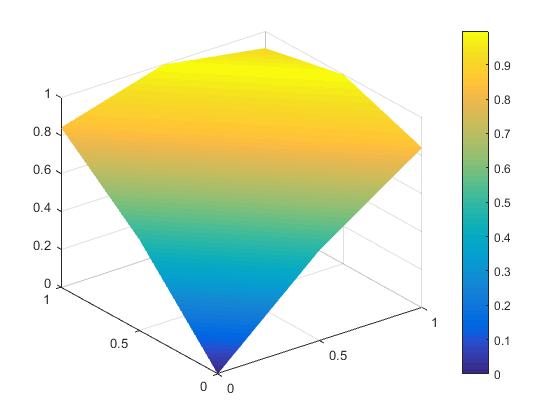}
	\caption{Exact solution of pressure $p$ at the terminal time\ $T$ of Test 2.}\label{figure_p8}
\end{figure}
\begin{figure}[H]
	\centering
	\includegraphics[height=2.2in,width=3in]{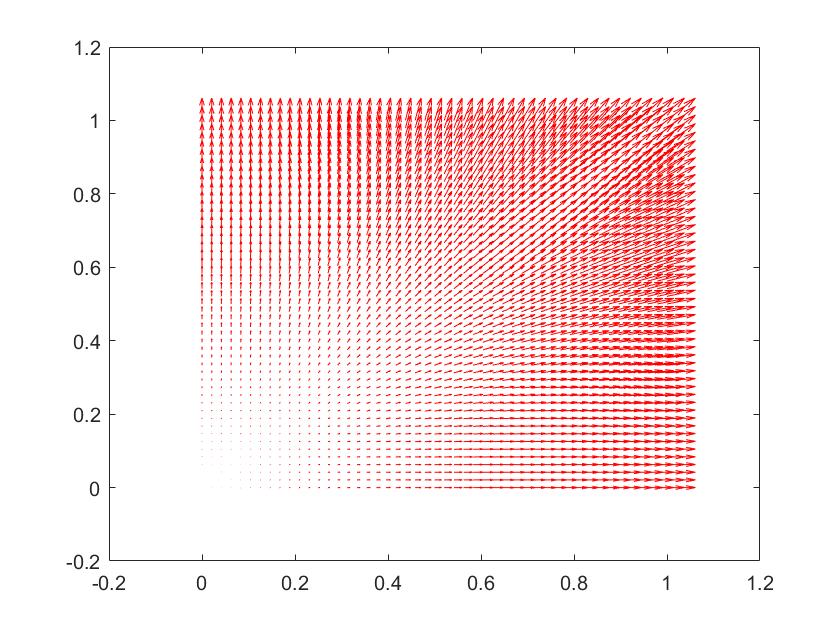}
	\caption{ Arrow plot of the computed displacement $ \bf u$ with the parameters of Table \ref{tab109}.}\label{figure_u_4}
	\end{figure}

Table \ref{tab104} and Table \ref{tab105} display the error of displacement $\mathbf{u}$ and the pressure $p$ with $L^2(\Omega)$-norm and $H^1(\Omega)$-norm at the terminal time $T$ with the parameters of Table \ref{tab103}, which are consistent with the theoretical result. Table \ref{tab1010} and Table \ref{tab1011} display the error of displacement $\mathbf{u}$ and the pressure $p$ with $L^2(\Omega)$-norm and $H^1(\Omega)$-norm at the terminal time $T$ with the parameters of Table \ref{tab109}, which show that our method overcomes the ``locking phenomenon".

Figure \ref{figure_p3} and Figure \ref{figure_p7} show the numerical solution of pressure $p_{h}^{n+1}$ at the terminal time $T$ due to the difference of parameters between Table \ref{tab103} and Table \ref{tab109}, Figure \ref{figure_p8} shows the analytical solution of pressure $p_{h}^{n+1}$ at the terminal time $T$. Figure  \ref{figure_u_2} and Figure \ref{figure_u_4} show the arrow plot of the computed displacement $ \bf u$ corresponding to the parameters of Table \ref{tab103} and Table \ref{tab109}, respectively.

\section{Conclusion}\label{sec-5}
In this paper, we propose a multiphysics finite element method and analyze the optimal error convergence order for a nonlinear poroelasticity model. Firstly, we reformulate the nonlinear fluid-solid coupling problem into a fluid-fluid coupling problem by a multiphysics approach. Secondly, we design a fully discrete time-stepping scheme to use multiphysics finite element method with $P_2-P_1-P_1$ element pairs  for the space variables and backward Euler method for the time variable, and we adopt the Newton iterative method to deal with the nonlinear term. Also, we derive the discrete energy laws and the optimal convergence order error estimates without any assumption on the nonlinear stress-strain relation. Finally, we show some numerical examples to verify the rationality of theoretical analysis and there is no ``locking phenomenon". To the best of our knowledge, the proposed fully discrete multiphysics finite element method for the nonlinear poroelasticity model is completely new.




\end{document}